\documentclass[a4paper]{article}
\usepackage{geometry}
\geometry{a4paper, left=40mm, right=40mm}
\usepackage{amsthm,amsmath,amsfonts,amssymb}

\usepackage[colorlinks,citecolor=blue,urlcolor=blue]{hyperref}
\usepackage{graphicx, color}
\usepackage{enumerate, enumitem}
\usepackage{dsfont}
\usepackage{mathtools}
\usepackage{lineno}
\usepackage{hyperref}
\allowdisplaybreaks

\theoremstyle{plain}
\newtheorem{thm}{Theorem}[section]
\newtheorem{lem}[thm]{Lemma}
\newtheorem{cor}[thm]{Corollary}

\newtheorem{prop}[thm]{Proposition}

\theoremstyle{remark}
\newtheorem{defi}[thm]{Definition}

\newtheorem{exam}[thm]{Example}
\newtheorem{remk}[thm]{Remark}

\newcommand{\ind}{\mathds{1}}

\title{On a multivariate copula-based dependence measure and its estimation}
\author{Florian Griessenberger\footnote{Department of Mathematics, University of Salzburg, 5020 Salzburg, Austria. florian.griessenberger@plus.ac.at, wolfgang.trutschnig@plus.ac.at}, Robert R. Junker\footnote{Department of Biosciences, University of Salzburg, 5020 Salzburg, Austria} \footnote{Evolutionary Ecology of Plants, Department of Biology, University of Marburg, 35043 Marburg, Germany.}, Wolfgang Trutschnig$^*$}
\date{}

\begin{document}

\maketitle

\begin{abstract}
	Working with so-called linkages allows to define a copula-based, $[0,1]$-valued multivariate dependence measure 
	$\zeta^1(\boldsymbol{X},Y)$ quantifying the
	scale-invariant extent of dependence of a random variable $Y$ on a $d$-dimen\-sional random vector $\boldsymbol{X}=(X_1,\ldots,X_d)$ 
	which exhibits various good and natural properties. In particular, $\zeta^1(\boldsymbol{X},Y)=0$ if and only if
	$\boldsymbol{X}$ and $Y$ are independent, $\zeta^1(\boldsymbol{X},Y)$ is maximal 
	exclusively if $Y$ is a function of $\boldsymbol{X}$, and 
	ignoring one or several coordinates of $\boldsymbol{X}$
	can not increase the resulting dependence value.  
	After introducing and analyzing the metric $D_1$ underlying the construction of the dependence measure and 
	deriving examples showing how much information can be lost by only considering all pairwise dependence values 
	$\zeta^1(X_1,Y),\ldots,\zeta^1(X_d,Y)$ we derive a so-called checkerboard estimator for 
	$\zeta^1(\boldsymbol{X},Y)$ and show
	that it is strongly consistent in full generality, i.e., without any smoothness restrictions on the underlying copula.
	Some simulations illustrating the small sample performance of the estimator complement the established 
	theoretical results.  
\end{abstract}

Keywords: association, copula, dependence measure, consistency, linkage, Markov kernel

\section{Introduction}

Consider a random vector $(X_1,X_2,\ldots,X_d,Y)$ on a probability space $(\Omega,\mathcal{A},\mathbb{P})$, write
$\boldsymbol{X}=(X_1,\ldots,X_d)$, and suppose that $(\boldsymbol{X}_1,Y_1),\ldots,(\boldsymbol{X}_n,Y_n)$ is
a sample of $(\boldsymbol{X},Y)$. Two of the major questions (think of problems like feature selection or regression) 
in statistics are (i) how much information $\boldsymbol{X}$ provides about $Y$ (or, equivalently, how 
dependent $Y$ is on $\boldsymbol{X}$) and (ii) how this dependence can be estimated in terms of 
$(\boldsymbol{X}_1,Y_1),\ldots,(\boldsymbol{X}_n,Y_n)$. 
Seemingly natural requirements for a measure $\delta$ quantifying the extent of dependence of $Y$ on $\boldsymbol{X}$ are: 
\begin{enumerate}
	\item[(N)] $\delta(\boldsymbol{X},Y) \in [0,1]$ (normalization).
	\item[(I)] $\delta(\boldsymbol{X},Y) = 0$ if and only if $Y$ and $\boldsymbol{X}$ are independent (independence). 
	\item[(C)] $\delta(\boldsymbol{X},Y) = 1$ if and only if $Y$ is a function of $\boldsymbol{X}$ (complete dependence).
	\item[(S)] $\delta$ is scale-invariant.
	\item[(IG)] $\delta$ fulfills the information gain inequality
	\begin{align*}
		\delta(X_1,Y) \leq \delta((X_1,X_2), Y) \leq  \cdots   \leq 
		\delta((X_1,\ldots, X_d),Y).
	\end{align*} 
\end{enumerate}  
Not surprisingly, various `dependence measures' (fulfilling the afore-mentioned five properties to different extents) 
and their estimators have been introduced and studied in the past 15 years, see for instance, \cite{azadkia2019,ding2016,chatterjee2020,dette2013, fuchs2022, junker, reshef2011,reshef2018, szekely2007, szekely2009, trutschnig2011} and the references therein. 
The majority of these works focuses on the bivariate setting, including the measures 
$\xi$ studied in \cite{chatterjee2020} (implemented in the R-package \textit{xicor}), $\zeta_1$ studied in 
\cite{junker, trutschnig2011} (R-package \textit{qad}) or \textit{MIC} studied in 
\cite{reshef2011,reshef2018} (R-package \textit{minerva}). 
Lea\-ving the $2$-dimensional setting and aiming at general multivariate measures  
the number of works decreases significantly, for a brief survey we refer to 
\cite{azadkia2019, Joe1989, szekely2009} and the references therein. 
In \cite{szekely2007,szekely2009}, for instance, the authors introduce the by now well known multivariate, characteristic function based dependence measure \textit{distance correlation} which generalizes the idea of correlation and is able 
to characterize independence of two random vectors $\boldsymbol{X}$ and $\boldsymbol{Y}$ in arbitrary dimension. 
Distance correlation, however, is not capable of detecting complete dependence in full generality, i.e., it does not attain the value $1$ exclusively for the case that $\boldsymbol{Y}$ is a function of $\boldsymbol{X}$ 
(and hence violates the afore-mentioned property (C)). 

{The recently introduced \emph{measure of conditional dependence} $T$ (see \cite{azadkia2019}), which is a multivariate extension of the highly performant bivariate measure $\xi$ (see \cite{chatterjee2020}) resolves this problem. Its remarkable estimator $T_n$, implemented in the R-package \textit{FOCI}, performs generally very well and has attracted a lot of attention in the statistical community, which is reflected by a large number of follow-up works.
	In \cite{deb2020, huang2020}, for instance, extensions of the (conditional) dependence measure to topological spaces and multivariate output $\boldsymbol{Y}$ were developed and studied. Furthermore, the authors in \cite{shi2021} derived 
	the (previously conjectured) asymptotic normality of $T_n$ under independence (see \cite{azadkia2019}) and some mild conditions, a result which, in turn, allows to apply $T_n$ in the context of testing for conditional independence. 
	Following \cite{shi2021} $T_n$ does not have optimal power in testing for 
	independence, which agrees with the bivariate setting as studied, e.g., in \cite{lin2021, shi_biometrika}. 
	For an excellent overview concerning $T_n$ and related notions/generalizations we refer to \cite{han2021} and the references therein.}

In the current paper we focus on an extension of the bivariate, copula-based dependence measure $\zeta_1$ and its 
so-called checkerboard estimator to multivariate $\boldsymbol{X}$. 
In dimension two, $\zeta_1$ and $\xi$ are closely related - in fact, $\xi$ is the $L_2$-version of the older 
$L_1$-based measure $\zeta_1$. Moreover, the estimators of the population 
values - although being based on entirely different ideas - perform comparably well (see \cite{junker}) 
in most situations, with the $\zeta_1$ estimator exhibiting a slightly higher power in detecting deviation from independence {as well as attaining only values within $[0,1]$, whereas the estimator of $\xi$ may
	also be negative (a property potentially hard to interpret for applicants outside the statistical community)}. 
As main result we will show that the introduced extension of $\zeta_1$ yields a dependence 
measure fulfilling all five properties (N), (I), (C), (S), (IG), and derive a strongly consistent estimator for it. 

Note that the seemingly natural extension of $\zeta_1$ interpreted as distance to independence 
does not yield a reasonable measure: In fact, for random variables $X,Y$ with continuous bivariate distribution 
function $H$, marginals $F,G$ and 
(unique) copula $A$ according to \cite{trutschnig2011} the measure $\zeta_1$ is defined by 
$$
\zeta_1(X,Y)=\zeta_1(A)=3 D_1(A,\Pi).
$$
Thereby $\Pi$ denotes the bivariate product copula and $D_1$ is the metric 
on the family $\mathcal{C}^2$ of all bivariate copulas defined by 
\begin{align*}
	D_1(A,B) := \int_{[0,1]} \int_{[0,1]} \left|K_A(u, [0,v]) - K_B(u,[0,v])\right| \, d\lambda(u) \, d\lambda(v),
\end{align*}
where $K_A(\cdot,\cdot), K_B(\cdot,\cdot)$ denote the Markov kernels (regular conditional distributions) 
of $A,B \in \mathcal{C}^2$, respectively. 
Proceeding analogously and considering  
\begin{align*}
	D_1(A,B):=\int_{[0,1]} \int_{[0,1]^d} |K_A(\boldsymbol{u}, [0,v]) - K_B(\boldsymbol{u},[0,v])| \, d\lambda^d(\boldsymbol{u}) 
	\, d\lambda(v),
\end{align*}
does not yield a metric on the family $\mathcal{C}^\rho$ of all $\rho$-dimensional copulas 
since, firstly, the (conditional) probability measures 
$K_A(\boldsymbol{u},\cdot)$ and $K_B(\boldsymbol{u},\cdot)$ are only unique $A^{1,\ldots,d}$ and $B^{1,\ldots,d}$-almost everywhere, whereby $A^{1,\ldots,d}$ and $B^{1,\ldots,d}$ denote the $d$-dimensional marginal copulas of $A$ and $B$, respectively. And secondly, none of the two marginals needs to be uniformly distributed on $[0,1]^d$. 
An alternative approach based on $\zeta_1$ of all pairs was studied in \cite{sanchez2015}, the obtained notion, 
however, did not exhibit the desired properties either since only considering all bivariate marginals may go
hand in hand with losing a lot of relevant information. 

In the sequel we will show how working with the so-called linkage operator (see \cite{Li1996}) which allows to transform 
general random vectors $\boldsymbol{X}$ to random vectors uniformly distributed on $[0,1]^d$ provides a way 
to overcome the afore-mentioned problems and hence opens the door to extending $\zeta_1$ to a measure
fulfilling all five desired properties. Notice that a similar, linkage-based approach was studied \cite{bonmee2016},
the focus there, however, was on the technically less demanding case of 
\textit{absolutely continuous} random vectors and, moreover, no estimator was provided. 
Also note that focusing only on univariate variables $Y$ entails no restriction since 
the dependence of a random vector $(Y_1,\ldots,Y_m)=\boldsymbol{Y}$ on $\boldsymbol{X}$ may be quantified by simply
considering the dependence scores of $(\boldsymbol{X},Y_1),\ldots, (\boldsymbol{X},Y_m)$. 

The rest of this paper is organized as follows: Section 2 gathers preliminaries and notations that will be used 
in the sequel. In Section 3 we recall the definition of linkages, express the connection between copulas and their 
linkage in terms of Markov kernels and prove a characterization of copulas fulfilling the so-called conditional independence property. In Section 4 we then introduce the metric $D_1$ on the space $\mathcal{C}_{\Pi_d}^\rho$ 
of linkages, discuss various (topological) properties (extending those in \cite{trutschnig2011}) and 
show that the family of so-called checkerboard copulas 
(which are key for the construction of our estimator) is dense in $(\mathcal{C}_{\Pi_d}^\rho, D_1)$. 
Based on $D_1$ we then construct the non-parametric measure of dependence $\zeta^1$ in Section 5 and 
show that the obtained measure fulfills all five properties mentioned before. 
Finally, we construct a checkerboard estimator for which we prove strong consistency in full generality 
(Section 6). Various examples and graphics illustrate both the obtained results and the ideas underlying the proofs. 
A simulation study illustrating the performance of our estimator and some technical proofs can be found
in the Appendix \ref{app:proofs} and \ref{app:simstudy}.

\section{Notation and preliminaries}
Throughout the paper $\rho \in \mathbb{N}$ will denote the dimension and $d$ is defined by $d:=\rho-1$. Bold symbols will be used to denote vectors, e.g., $\boldsymbol{x}=(x_1,x_2,\ldots, x_d) \in \mathbb{R}^d$ and we will also write $(\boldsymbol{x}, y)$ for $(x_1, \ldots, x_d, y) \in \mathbb{R}^\rho$. The $\rho$-dimensional Lebesgue-measure will be denoted by $\lambda^\rho$, in case of $\rho=1$ we will also simply write $\lambda$ and $\mathbb{I}$ will denote the unit interval $[0,1]$. 
Moreover, $\mathcal{C}^\rho$ denotes the family of all $\rho$-dimensional copulas, in the two-dimensional setting we will also write $\mathcal{C}$, $d_\infty$ will denote the uniform metric on $\mathcal{C}^\rho$, i.e., 
$$d_\infty(A,B):= \max_{\boldsymbol{x} \in \mathbb{I}^\rho} \left| A(\boldsymbol{x}) - B(\boldsymbol{x}) \right|.$$
It is well known that $(\mathcal{C}^\rho, d_\infty)$ is a compact metric space (see, for instance, \cite{durante, nelsen}). For every $C \in \mathcal{C}^\rho$ the corresponding $\rho$-stochastic measure will be denoted by $\mu_C$, i.e., $\mu_C([0,\boldsymbol{x}]) = C(\boldsymbol{x})$ for all $\boldsymbol{x} \in \mathbb{I}^\rho$, whereby $[0,\boldsymbol{x}]:=\bigtimes_{i=1}^\rho [0,x_i] =  [0,x_1] \times \cdots \times [0,x_\rho]$. $\Pi_\rho$ and $M_\rho$ denote the $\rho$-dimensional product and the $\rho$-dimensional minimum copula, i.e., $\Pi(x_1,\ldots, x_\rho) = \prod_{j=1}^\rho x_j$ as well as $M_\rho(x_1, \ldots, x_\rho) = \min\{x_1, \ldots, x_\rho\}$, for $\rho=2$ we will also simply write $\Pi$ and $M$.
If we consider the marginal co\-pula with respect to the first $k$ variables, we will simply write $A^k$, i.e., we have $A^k(x_1,\ldots, x_k) = A(x_1,\ldots, x_k, 1,\ldots, 1)$, furthermore $C^{ij}$ denotes the $i$-$j$-marginal 
for $i \neq j$ and $i,j \in \{1,\ldots, \rho\}$, i.e., 
$$C^{ij}(x_i, x_j) = C(1, \ldots, 1, x_i, 1, \ldots, 1, x_j, 1, \ldots, 1).$$ 
For more background on copulas and $\rho$-stochastic probability measures we refer to \cite{durante, klenke, nelsen} and the references therein. 

In what follows, Markov kernels will play a prominent role. For every metric space $(\Omega,d)$ the Borel $\sigma$-field on $\Omega$ will be denoted by $\mathcal{B}(\Omega)$. Let $(\Omega_1,\mathcal{A}_1)$ and $(\Omega_2, \mathcal{A}_2)$ be measurable spaces. A map $K: \Omega_1 \times \mathcal{A}_2 \to [0,1]$ is called Markov kernel from $(\Omega_1,\mathcal{A}_1)$ to $(\Omega_2,\mathcal{A}_2)$ if for every fixed $A_2 \in \mathcal{A}_2$ the map $\omega_1 \mapsto K(\omega_1, A_2)$ is $\mathcal{A}_1$-$\mathcal{B}(\mathbb{R})$-measurable and for every fixed $\omega_1 \in \Omega_1$ the map $A_2 \mapsto K(\omega_1, A_2)$ is a probability measure on $\mathcal{A}_2$.  
Given a $j$-dimensional random vector $\boldsymbol{Y}$ and a $k$-dimensional random vector $\boldsymbol{X}$ on a probability space $(\Omega, \mathcal{A}, \mathbb{P})$, i.e., measurable mappings  $\boldsymbol{Y}:(\Omega, \mathcal{A}, \mathbb{P}) \to (\mathbb{R}^j,\mathcal{B}(\mathbb{R}^j))$ and $\boldsymbol{X}:(\Omega, \mathcal{A}, \mathbb{P}) \to (\mathbb{R}^k,\mathcal{B}(\mathbb{R}^k))$, we say that a Markov kernel $K$ is a regular conditional distribution of $\boldsymbol{Y}$ given $\boldsymbol{X}$ if 
\begin{align*}
	K(\boldsymbol{X}(\omega), F) = \mathbb{E}(\ind_F \circ \boldsymbol{Y} | \boldsymbol{X})(\omega)
\end{align*}
holds $\mathbb{P}$-almost surely for every $F \in \mathcal{B}(\mathbb{R}^{j})$. It is well-known that for each random vector $(\boldsymbol{X}, \boldsymbol{Y})$ a regular conditional distribution $K(\cdot, \cdot)$ of 
$\boldsymbol{Y}$ given $\boldsymbol{X}$ always exists and is unique for $\mathbb{P}^{\boldsymbol{X}}$-a.e. $\boldsymbol{x} \in \mathbb{R}^k$, whereby $\mathbb{P}^{\boldsymbol{X}}$ denotes the push-forward of $\mathbb{P}$ under $\boldsymbol{X}$. It is well known that $K(\cdot, \cdot)$ only depends on $\mathbb{P}^{(\boldsymbol{X},\boldsymbol{Y})}$, hence, if $(\boldsymbol{X},\boldsymbol{Y})$ has distribution function $H$ (in which case we will also write $(\boldsymbol{X},\boldsymbol{Y}) \sim H$) we will let $K_H(\cdot, \cdot)$ denote (a version of) the regular conditional distribution of $\boldsymbol{Y}$ given $\boldsymbol{X}$ and simply refer to it as Markov kernel of $H$. Furthermore, we will write $\mathcal{F}^\rho$ for the family of all $\rho$-dimensional distribution functions. \\
If $C \in \mathcal{C}^\rho$ is a copula, then we will consider the Markov kernel of $C$ (with respect to the first $k$-coordinates) as mapping $K_{C}:\mathbb{I}^k \times \mathcal{B}(\mathbb{I}^{\rho - k}) \to \mathbb{I}$. 
Defining the $\boldsymbol{x}$-section of a set $G \in \mathcal{B}(\mathbb{I}^\rho)$ as $G_{\boldsymbol{x}}:=\{\boldsymbol{y} \in \mathbb{I}^{\rho - k}: (\boldsymbol{x},\boldsymbol{y}) \in G\}$ the so-called disintegration theorem yields 
\begin{align}
	\mu_C(G) = \int_{\mathbb{I}^k} K_{C}(\boldsymbol{x},G_{\boldsymbol{x}}) \, d\mu_{C^{k}}(\boldsymbol{x}),
\end{align}
so for the particular case $k = d:=\rho-1$ we have
\begin{align}
	\mu_C(\mathbb{I}^d \times F) = \int_{\mathbb{I}^d} K_{C}(\boldsymbol{x},F) \, d\mu_{C^{d}}(\boldsymbol{x}) = \lambda(F).
\end{align}
For more background on conditional expectations, regular conditional distributions and general disintegration we refer to \cite{kallenberg, klenke}. \\
In the sequel $\mathcal{U}(0,1)$ will denote the uniform distribution on $\mathbb{I}$ and if 
$X \sim F$ we write $F^-$ for the pseudo-inverse of the distribution function $F$, i.e., 
$F^-(y) := \inf\{x \in \mathbb{R}: F(x) \geq y\}$.

\section{Copulas and linkages}\label{sec:linkages}
Suppose that $X_1,\ldots, X_d$ are random variables on $(\Omega, \mathcal{A}, \mathbb{P})$ with cumulative distribution functions $F_1, \ldots, F_d$ and let $H$ denote the cumulative distribution function of the random vector $\boldsymbol{X}=(X_1, X_2, \ldots, X_d)$.
According to Sklar's theorem (see, for instance, \cite{durante, nelsen}) there exists a unique subcopula $S:cl(Range(F_1)) \times \cdots \times cl(Range(F_d)) \to [0,1]$ such that for all $\boldsymbol{x} \in \mathbb{R}^d$
\begin{align}\label{sklar}
	H(x_1,\ldots, x_d) = S(F_1(x_1), \ldots, F_d(x_d)).
\end{align}
If $F_1,\ldots, F_d$ are continuous distribution functions, then $S$ is a copula and unique, otherwise $S$ can be extended in uncountably many ways to a copula $C \in \mathcal{C}^d$ (see \cite{durante}). To ensure uniqueness of $C$ in the general setting, we agree on working with the multi-linear interpolation of $S$ (see, for instance, \cite{durante,genest2014}). 

In what follows we will work with the so-called modified distribution function (see \cite[2.3.4.]{durante} or \cite{rueschendorf2009}). Let $Z$ be a random variable on $(\Omega,\mathcal{A}, \mathbb{P})$ with distribution function $F$ and $r \in [0,1]$ be fixed. Then $F^r:\mathbb{R} \to [0,1]$, defined by 
\begin{align*}
	F^r(z) := F(z-) + r\left[ F(z)-F(z-)\right] = \mathbb{P}(Z < z) + r \cdot \mathbb{P}(Z = z) 
\end{align*}
is called modified distribution function of $Z$. The following well known lemma which considers random $r$ gathers the 
main properties of the generalized distributional transform (a.k.a. generalized PIT) 
$F^R(Z)$ of $Z$ (for a proof see \cite{rueschendorf2009}).
\begin{lem}[Generalized PIT]\label{lem:rpit}
	Suppose that $Z$ has distribution function $F$, that $R$ is uniformly distributed and that $Z$ and $R$ 
	are independent. Furthermore, set $U:=F^R(Z)$. Then the random variable $U$ is uniformly distributed on $[0,1]$ and 
	the identity $F^-(U) = Z$ holds with probability $1$.
\end{lem}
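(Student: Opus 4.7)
The plan is to prove both claims simultaneously by constructing a reverse coupling. Introduce an auxiliary pair $(W,V)$ by letting $V\sim\mathcal{U}(0,1)$ and setting $W:=F^-(V)$. The classical probability integral transform gives $W\sim F$, and the defining property of the pseudo-inverse yields $F(W-)\leq V\leq F(W)$ almost surely, with $V\in(F(W-),F(W)]$ whenever this interval is non-degenerate.

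First I would compute the regular conditional distribution of $V$ given $W$: when $z$ is an atom of $F$ (i.e.\ $\Delta(z):=F(z)-F(z-)>0$), the event $\{W=z\}$ coincides with $\{V\in(F(z-),F(z)]\}$, and because $V$ was uniform this forces the conditional law of $V$ given $W=z$ to be the uniform distribution on $[F(z-),F(z)]$; at continuity points of $F$ a regular version is given by the Dirac mass at $F(z)$. Then I would compute the regular conditional distribution of $U=F^R(Z)$ given $Z$: since $R$ is independent of $Z$ and $U=F(z-)+R\,\Delta(z)$ conditionally on $Z=z$, the conditional law is again uniform on $[F(z-),F(z)]$ at atoms and the Dirac mass at $F(z)$ at continuity points. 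The two conditional laws thus agree.

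Since the marginal laws of $W$ and $Z$ both equal the distribution associated with $F$, equality of the two conditional kernels implies that $(Z,U)$ and $(W,V)$ have the same joint distribution. The first conclusion $U\sim\mathcal{U}(0,1)$ follows by taking the second marginal; for the second conclusion, observe that the identity $F^-(V)=W$ holds almost surely by construction, and since the Borel event $\{(w,v):F^-(v)=w\}$ has probability one under the law of $(W,V)$ it also has probability one under the law of $(Z,U)$, yielding $F^-(U)=Z$ almost surely.

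The one place where technical care is required is around the ``boundary'' events $V=F(W-)$ and $U=F(Z-)$: both have probability zero (the former because $V$ is atomless on the part corresponding to $F$-continuity points, the latter because $R>0$ almost surely), so the minor discrepancy between the conditional supports $[F(z-),F(z)]$ and $(F(z-),F(z)]$ is immaterial. This is really the only subtle step; once handled, the coupling identification is immediate and both assertions drop out together.
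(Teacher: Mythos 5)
Your argument is correct. One point of context first: the paper does not prove this lemma at all --- it is quoted with a pointer to R\"uschendorf (2009), where the standard argument is a direct distributional computation, namely fixing $u\in(0,1)$, splitting according to whether $F^-(u)$ is an atom of $F$, and using the independence of $R$ and $Z$ to show $\mathbb{P}(F^R(Z)\leq u)=u$, with the identity $F^-(U)=Z$ then checked separately on the flat and jump parts of $F$. Your coupling route is genuinely different and, in my view, cleaner: by identifying the law of $(Z,U)$ with that of $(W,V)=(F^-(V),V)$ through equality of the first marginals and of the conditional kernels (uniform on $[F(z-),F(z)]$ at atoms, Dirac at $F(z)$ at continuity points), you obtain both conclusions in one stroke, since uniformity of $U$ is the second marginal and $F^-(U)=Z$ a.s.\ follows because the Borel set $\{(w,v):F^-(v)=w\}$ carries full mass under $\mathbb{P}^{(W,V)}$. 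The key identities you rely on --- $F(F^-(v)-)\leq v\leq F(F^-(v))$, the fact that $\{W=z\}$ coincides with $\{V\in(F(z-),F(z)]\}$ up to a $\lambda$-null set, and that the pushforward of $\mathcal{U}(0,1)$ under $r\mapsto F(z-)+r\,\Delta(z)$ is uniform on the jump interval --- are all standard and correctly invoked, and your closing remark disposes of the only genuine boundary subtlety. If you wanted to polish it further, you could state explicitly that $z\mapsto(F(z-),F(z))$ is Borel so that your candidate kernel is indeed a Markov kernel, and record the disintegration identity $\mathbb{P}(W\in A,\,V\in B)=\int_A K(z,B)\,dF(z)$ that justifies passing from equal kernels and equal marginals to equal joint laws; but these are routine verifications, not gaps.
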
 

To simplify notation we will write 
$$F_{d|1\ldots d-1}(x_d|x_1,\ldots, x_{d-1}) := K_{H}(x_1,\ldots, x_{d-1}, [0,x_d])$$ 
as well as
$$F^r_{d|1\ldots d-1}(x_d|x_1,\ldots, x_{d-1}) := K_{H}(x_1,\ldots, x_{d-1}, [0,x_d)) + r K_{H}(x_1,\ldots, x_{d-1}, \{x_d\}),$$
for the conditional and the modified conditional distribution functions, respectively, 
where $\boldsymbol{X} = (X_1,\ldots, X_d) \sim H$ and $r \in \mathbb{I}$. 
If $R \sim \mathcal{U}(0,1)$ and $\boldsymbol{X}$ are independent, then for all vectors $(x_1,\ldots, x_{d-1}) \in \mathbb{R}^{d-1}$ the random variable $F^R_{d|1\ldots, d-1}(X_d|x_1,\ldots, x_{d-1})$ (i.e., the generalized probability integral transform
$F^R_{d|1\ldots d-1}(Y| x_1,\ldots, x_d)$ of the random variable $Y \sim K_H(x_1,\ldots, x_{d-1}, \cdot)$),
is uniformly distributed on $\mathbb{I}$.  
Suppose that $\boldsymbol{r}=(r_1,r_2,\ldots,r_d) \in \mathbb{I}^d$, then following \cite{rosenblatt1952, rueschendorf1993} the transformations $\Phi^{\boldsymbol{r}}:\mathbb{R}^d \to \mathbb{I}^d$ and $\Psi:\mathbb{I}^d \to \mathbb{R}^d$ are defined by 
\begin{align*}
	\Phi^{\boldsymbol{r}}(x_1,x_2,\ldots, x_d):=(F^{r_1}_1(x_1),F^{r_2}_{2|1}(x_2|x_1), \ldots, F^{r_d}_{d|1\ldots d-1}(x_d|x_1,\ldots, x_{d-1}))
\end{align*}
for all $\boldsymbol{x} \in \mathbb{R}^d$ and 
\begin{align*}
	\Psi(u_1,u_2,\ldots, u_d):= (z_1, z_2, \ldots, z_d),
\end{align*}
whereby $z_1 = F_1^-(u_1)$ and, inductively
$$z_i = F_{i|1\ldots i-1}^-(u_i|z_1,z_2,\ldots, z_{i-1})$$
for $i = 2,3,\ldots, d$. 
Both transformations are well known in statistics: $\Phi^{\boldsymbol{r}}$ is called Rosenblatt transformation and was introduced by Rosenblatt in 1952 \cite{rosenblatt1952} for absolutely continuous random variables and generalized in \cite{rueschendorf1981, rueschendorf1993} to the general setting. The \textquoteleft inverse' transformation $\Psi$ is well known in the context of generating samples. The following lemma gathers the main properties of the mappings $\Phi^{\boldsymbol{R}}$ and $\Psi$ (see \cite{Li1996, rosenblatt1952} for the absolutely continuous setting as well as \cite[Proposition 2 and Theorem 3]{rueschendorf1993} and \cite[Section 3]{rueschendorf2009} for the general case). 

\begin{lem}\label{lem::psiphi}
	Suppose that $\boldsymbol{X} = (X_1,X_2,\ldots, X_d) \sim H$, $\boldsymbol{U}=(U_1,U_2,\ldots U_d) \sim \Pi_d \in \mathcal{C}^d$ and that $\boldsymbol{R}=(R_1,R_2,\ldots, R_d) \sim \Pi_d$ is independent of $\boldsymbol{X}$ and $\boldsymbol{U}$. Then the following statements hold:   
	\begin{enumerate}[parsep = 1ex]
		\item $\Phi^{\boldsymbol{R}}(X_1,X_2,\ldots, X_d) \sim \Pi_d$,
		\item $\Psi(U_1,U_2,\ldots, U_d) \sim H$.
	\end{enumerate}
	In particular, $\Phi^{\boldsymbol{R}} \circ \Psi (U_1,U_2,\ldots, U_d) \sim \Pi_d$, and
	\begin{enumerate}
		\item[3.] $\Psi \circ \Phi^{\boldsymbol{R}}(X_1,X_2,\ldots, X_d) = (X_1,X_2,\ldots, X_d)$ with 
		probability $1$.
	\end{enumerate}
\end{lem}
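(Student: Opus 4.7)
The plan is to establish all three statements by induction on the dimension, with Lemma~\ref{lem:rpit} doing the heavy lifting at every step. Because claim~3 follows from Lemma~\ref{lem:rpit} applied coordinate-wise, and claim~2 can be deduced from claims~1 and~3 combined, I would prove them in the order 1, 3, 2.

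For claim~1 I would show by induction on $k \in \{1,\ldots,d\}$ that the first $k$ coordinates $(U_1,\ldots,U_k)$ of $\Phi^{\boldsymbol{R}}(X_1,\ldots,X_d)$ have distribution $\Pi_k$ and are jointly independent of $(R_{k+1},\ldots,R_d)$. The base case $k=1$ is precisely Lemma~\ref{lem:rpit} applied to $X_1$ and $R_1$. For the inductive step I would fix $(x_1,\ldots,x_{k-1})$ and apply Lemma~\ref{lem:rpit} to the regular conditional distribution $F_{k|1\ldots k-1}(\cdot|x_1,\ldots,x_{k-1})$ of $X_k$ given $(X_1,\ldots,X_{k-1})$; the independence of $R_k$ from everything produced so far allows us to invoke the lemma and conclude that $U_k = F^{R_k}_{k|1\ldots k-1}(X_k|X_1,\ldots,X_{k-1})$ is conditionally $\mathcal{U}(0,1)$. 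Because the conditional law does not depend on $(x_1,\ldots,x_{k-1})$, $U_k$ is unconditionally independent of $(U_1,\ldots,U_{k-1})$ and of $(R_{k+1},\ldots,R_d)$, closing the induction.

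For claim~3, Lemma~\ref{lem:rpit} also provides the identity $F^-(F^R(Z)) = Z$ almost surely whenever $Z \sim F$ is independent of a uniform $R$. Applying this coordinate-wise to the conditional laws $F_1$, $F_{2|1}(\cdot|X_1)$, \ldots, $F_{d|1\ldots d-1}(\cdot|X_1,\ldots,X_{d-1})$, together with the just established fact that $R_i$ is independent of $(X_1,\ldots,X_i)$, one obtains $Z_i = X_i$ almost surely for $i=1,\ldots,d$, and therefore $\Psi \circ \Phi^{\boldsymbol{R}}(\boldsymbol{X}) = \boldsymbol{X}$ with probability one. Claim~2 is then a bookkeeping step: $\Psi$ is a deterministic Borel map, so the distribution of $\Psi(\boldsymbol{U})$ depends only on the distribution of $\boldsymbol{U}$. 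By claim~1 we have $\Phi^{\boldsymbol{R}}(\boldsymbol{X}) \sim \Pi_d$, and by claim~3 we have $\Psi\bigl(\Phi^{\boldsymbol{R}}(\boldsymbol{X})\bigr) = \boldsymbol{X} \sim H$; hence $\Psi(\boldsymbol{U}) \sim H$ whenever $\boldsymbol{U} \sim \Pi_d$.

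The main obstacle is the general, possibly non-continuous, setting: when the marginals or the conditional distribution functions carry atoms one genuinely has to rely on the modified distribution functions $F^r$ and on the randomisation provided by $\boldsymbol{R}$, since otherwise the transformed coordinates need not be uniform. In particular, the joint measurability of $(r,x_1,\ldots,x_i) \mapsto F^r_{i|1\ldots i-1}(x_i|x_1,\ldots,x_{i-1})$, and the fact that the kernel $K_H$ can be disintegrated as a chain of one-dimensional conditional distribution functions, must be justified; this is precisely where the general versions of the Rosenblatt transformation developed in \cite{rueschendorf1981,rueschendorf1993,rueschendorf2009} enter. Once Lemma~\ref{lem:rpit} and this measurable disintegration are in place, the inductive argument itself is routine.
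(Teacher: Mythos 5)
The paper does not actually prove this lemma: it is stated as a known result with a pointer to the literature (Rosenblatt's original paper for the absolutely continuous case, and \cite[Proposition 2 and Theorem 3]{rueschendorf1993} together with \cite[Section 3]{rueschendorf2009} for the general case). Your sketch is a correct reconstruction of the standard argument behind those references: the coordinate-wise induction for claim~1 (conditioning on $(X_1,\ldots,X_{k-1})$, applying Lemma~\ref{lem:rpit} to the conditional law, and using that the resulting conditional distribution of $U_k$ is uniform independently of the conditioning point), the conditional quantile-transform identity $F^-(F^R(Z))=Z$ a.s.\ for claim~3, and the derivation of claim~2 from claims~1 and~3 via the push-forward of $\Pi_d$ under the deterministic map $\Psi$ are all sound. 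The only places requiring genuine care are exactly the ones you flag: the joint measurability of $(r,x_1,\ldots,x_i)\mapsto F^r_{i|1\ldots i-1}(x_i|x_1,\ldots,x_{i-1})$ and of the iterated pseudo-inverses defining $\Psi$, the fact that the kernels are only $\mathbb{P}^{(X_1,\ldots,X_{i-1})}$-a.e.\ unique (so all identities in claim~3 must be assembled on an intersection of full-measure sets), and the need to condition on the full past $(X_1,\ldots,X_{k-1},R_1,\ldots,R_{k-1})$ rather than on $(U_1,\ldots,U_{k-1})$ alone in the induction for claim~1. None of these is a gap in the logic, only in the level of detail, and they are precisely what the cited Rüschendorf results supply.
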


For the rest of the paper we agree on the following conventions. Suppose that $\boldsymbol{X} = (X_1,\ldots, X_d)$ and $Y$ are defined on the same probability space $(\Omega, \mathcal{A}, \mathbb{P})$, $(\boldsymbol{X},Y) \sim H$ with univariate marginals $F_1, \ldots, F_d$ and $G$, respectively. We say a random vector $\boldsymbol{X}$ is uniform if its distribution function restricted to $\mathbb{I}^d$ coincides with $\Pi_d$, i.e., $\mathbb{P}^{\boldsymbol{X}} = \lambda^d\rvert_{\mathbb{I}^d}$. Moreover, let $R_1,\ldots, R_d,R_\rho$ be i.i.d., uniformly distributed on $\mathbb{I}$ and write $\boldsymbol{R} = (R_1,\ldots, R_d)$. Following \cite{Li1996} the joint distribution function of the random vector $(\boldsymbol{U},V)$ defined by 
\begin{align}\label{eq:link}
	(\boldsymbol{U},V) := (\Phi^{\boldsymbol{R}}(\boldsymbol{X}), \Phi^{R_\rho}(Y))
\end{align}
is called \textit{linkage} of $(\boldsymbol{X},Y)$, or linkage of $H$. The family of all $\rho$-dimensional linkages, 
i.e., co\-pulas with $d$-dimensional uniform margin, will be denoted by 
$$
\mathcal{C}_{\Pi_d}^\rho:= \{ C \in \mathcal{C}^\rho: C(x_1,\ldots, x_d, 1) = \Pi_d(x_1,\ldots, x_d) \}.
$$
Interpreting the linkage construction as operator from $\mathcal{F}^\rho$ to $\mathcal{C}^\rho_{\Pi_d}$ we will also write $L:\mathcal{F}^\rho \to \mathcal{C}_{\Pi_d}^\rho$. \\In other words: if $(\boldsymbol{X}, Y) \sim H$ then $(\boldsymbol{U}, V) = (\Phi^{\boldsymbol{R}}(\boldsymbol{X}), \Phi^{R_\rho}(Y)) \sim L(H)$. 

In the sequel we will mainly focus on the linkage operation on the space of copulas, i.e., the mapping $L:\mathcal{C^\rho} \to \mathcal{C}_{\Pi_d}^\rho$ assigning each copula its linkage, implying that for every $\rho$-dimensional random vector 
$(\boldsymbol{X},Y) \sim C \in \mathcal{C}^\rho$ we have $(\boldsymbol{U},V) = (\Phi^{\boldsymbol{R}}(\boldsymbol{X}), Y) \sim L(C)$. Notice that the random vectors $(X_1,Y)$ and $(U_1,V)$ have the same distribution by construction. The following useful lemmata describe the connection between the copula and its linkage in terms of the corresponding Markov kernels and partial derivatives.    

\begin{lem}\label{lem::MKLinkage}
	Suppose that $(X_1,\ldots, X_d,Y) \sim A \in \mathcal{C}^\rho$ and let $(\boldsymbol{U}, V)$ be defined 
	according to Eq. \eqref{eq:link}. Then
	\begin{align*}
		K_{L(A)}(x_1, x_2 \ldots, x_d, [0,y]) := K_A\big(\Psi(x_1,x_2, \ldots, x_d), [0,y]\big)
	\end{align*}
	defines a Markov kernel of $L(A)$, i.e., the following identity holds for 
	all $(\boldsymbol{x},y)=(x_1,\ldots, x_d,y) \in \mathbb{I}^\rho$:
	\begin{align*}
		L(A)(x_1,x_2,\ldots, x_d,y) = \int_{[0,\boldsymbol{x}]} K_A\big(\Psi(u_1,u_2,\ldots, u_d), [0,y]\big) \, d\lambda^d(\boldsymbol{u})
	\end{align*}
	
\end{lem}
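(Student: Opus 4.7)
The plan is to verify two things: first, that the proposed map is a bona fide Markov kernel, and second, that it represents $L(A)$ through the disintegration identity characterizing regular conditional distributions. The first point is routine: for fixed $y \in \mathbb{I}$, the map $\boldsymbol{x} \mapsto K_A(\Psi(\boldsymbol{x}), [0,y])$ is Borel measurable because it composes the Borel-measurable transformation $\Psi : \mathbb{I}^d \to \mathbb{I}^d$ with a Borel-measurable section of $K_A$; and for fixed $\boldsymbol{x}$, $B \mapsto K_A(\Psi(\boldsymbol{x}), B)$ is manifestly a probability measure on $\mathcal{B}(\mathbb{I})$. So the definition yields a valid Markov kernel from $(\mathbb{I}^d, \mathcal{B}(\mathbb{I}^d))$ to $(\mathbb{I}, \mathcal{B}(\mathbb{I}))$.

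For the substantive disintegration step, I will work with the random-variable representation. Set $\boldsymbol{U} := \Phi^{\boldsymbol{R}}(\boldsymbol{X})$ and $V := Y$; note that $\Phi^{R_\rho}(Y) = Y$ almost surely since $Y$ has a uniform marginal and hence a continuous c.d.f. By Lemma \ref{lem::psiphi} we then have $(\boldsymbol{U}, V) \sim L(A)$, $\boldsymbol{U} \sim \Pi_d$, and $\boldsymbol{X} = \Psi(\boldsymbol{U})$ almost surely. For every Borel set $B \subseteq \mathbb{I}^d$, I would first condition on $(\boldsymbol{X}, \boldsymbol{R})$ and exploit independence of $\boldsymbol{R}$ from $(\boldsymbol{X}, Y)$ to collapse the randomization:
\begin{align*}
\mathbb{P}(\boldsymbol{U} \in B,\, V \leq y)
&= \mathbb{E}\bigl[\ind_B(\boldsymbol{U})\, \mathbb{E}[\ind_{[0,y]}(Y) \mid \boldsymbol{X}, \boldsymbol{R}]\bigr] \\
&= \mathbb{E}\bigl[\ind_B(\boldsymbol{U})\, K_A(\boldsymbol{X}, [0,y])\bigr].
\end{align*}
Substituting $\boldsymbol{X} = \Psi(\boldsymbol{U})$ a.s. and using $\mathbb{P}^{\boldsymbol{U}} = \lambda^d\rvert_{\mathbb{I}^d}$ rewrites the right-hand side as $\int_B K_A(\Psi(\boldsymbol{u}), [0,y])\, d\lambda^d(\boldsymbol{u})$. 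Specializing to $B = [0, \boldsymbol{x}]$ yields the integral representation of $L(A)(\boldsymbol{x}, y)$ in the statement, and validity of the identity for every Borel $B$, together with $\lambda^d$-almost-everywhere uniqueness of the regular conditional distribution, certifies that $\boldsymbol{x} \mapsto K_A(\Psi(\boldsymbol{x}), [0,y])$ is a legitimate version of $K_{L(A)}(\cdot, [0,y])$.

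The main obstacle I anticipate is a foundational subtlety about $\sigma$-algebras rather than any hard computation: $\sigma(\boldsymbol{U})$ is in general strictly smaller than $\sigma(\boldsymbol{X}, \boldsymbol{R})$, yet it contains $\sigma(\boldsymbol{X})$ modulo null sets by Lemma \ref{lem::psiphi}(3), and this two-sided relationship must be handled carefully. The clean way to finesse it is the ordering indicated above: collapse the randomization by conditioning first on the finer $\sigma$-algebra $\sigma(\boldsymbol{X}, \boldsymbol{R})$, drop the $\boldsymbol{R}$-dependence via independence, and only then rewrite $\boldsymbol{X}$ as a measurable function of $\boldsymbol{U}$ so that the integrand becomes genuinely $\sigma(\boldsymbol{U})$-measurable. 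One should also keep in mind that equality of Markov kernels holds only $\lambda^d$-almost everywhere, in agreement with the standard uniqueness statement for regular conditional distributions.
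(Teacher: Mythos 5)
Your proof is correct and in substance identical to the paper's: both rely on $\Psi(\boldsymbol{U})=\boldsymbol{X}$ a.s., independence of $\boldsymbol{R}$ from $(\boldsymbol{X},Y)$ to collapse $\mathbb{E}[\ind_{[0,y]}(Y)\mid\boldsymbol{X},\boldsymbol{R}]$ to $K_A(\boldsymbol{X},[0,y])$, and $\mathbb{P}^{\boldsymbol{U}}=\lambda^d\rvert_{\mathbb{I}^d}$; the only difference is that you compute $\mathbb{P}(\boldsymbol{U}\in B, V\le y)$ forward whereas the paper unwinds the integral backward to that probability. You also dispense with the paper's Dynkin-system step for measurability by observing directly that $\boldsymbol{x}\mapsto K_A(\Psi(\boldsymbol{x}),E)$ is a composition of measurable maps for every Borel $E$, which is a harmless simplification.
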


\begin{proof}
	Obviously, the mapping $F \mapsto K_A(\Psi(x_1,x_2,\ldots, x_d), F)$ is a probability measure for every fixed $\boldsymbol{x} \in \mathbb{I}^d$. Moreover, measurability of the mapping $(x_1,\ldots, x_d) \mapsto K_A\big(\Psi(x_1,x_2, \ldots, x_d), [0,y]\big)$ for every fixed $y \in [0,1]$ is a direct consequence of measurability of $\Psi$ and the fact that $K_A(\cdot, \cdot)$ is a Markov kernel. Since the family $$\mathcal{D}:=\{ E \subseteq \mathbb{I}: (x_1,\ldots, x_d) \mapsto K_A\big(\Psi(x_1,x_2, \ldots, x_d), E\big) \text{ is measureable} \}$$ forms a Dynkin system containing the family of all intervals of the form $[0,y]$ we conclude that $K_{L(A)}(\cdot, \cdot)$ is a Markov kernel and 
	it remains to show that $(\boldsymbol{x},y) \mapsto K_A(\Psi(\boldsymbol{x}), [0,y])$ is a regular conditional distribution of $L(A)$. 
	Fix $y \in [0,1]$, then using change of coordinates and applying Lemma \ref{lem::psiphi} repeatedly yields
	\begin{align*}
		\int_{[0,\boldsymbol{u}]} K_A\big(\Psi(x_1,x_2,&\ldots, x_d), [0,y]\big) \, d\lambda^d(\boldsymbol{x}) \\
		&= \int_{\mathbb{I}^d} \left(\ind_{[0,\boldsymbol{u}]}(\boldsymbol{\boldsymbol{x}})\right)  K_A\big(\Psi(\boldsymbol{\boldsymbol{x}}), [0,y]\big) \, d\mathbb{P}^{\Phi^{\boldsymbol{R}} \circ \Psi(\boldsymbol{U})}(\boldsymbol{x})\\
		&= \int_\Omega \ind_{[0,\boldsymbol{u}]}\left(\Phi^{\boldsymbol{R}} \circ \Psi(\boldsymbol{U})\right) K_A\big(\Psi \circ \Phi^{\boldsymbol{R}} \circ \Psi(\boldsymbol{U}), [0,y]\big) \, d\mathbb{P} \\
		&= \int_\Omega \ind_{[0,\boldsymbol{u}]}\left(\Phi^{\boldsymbol{R}} (\boldsymbol{X})\right) K_A\big(\Psi \circ \Phi^{\boldsymbol{R}} (\boldsymbol{X}), [0,y]\big) \, d\mathbb{P} \\
		&= \int_\Omega \ind_{[0,\boldsymbol{u}]}\left(\Phi^{\boldsymbol{R}} (\boldsymbol{X})\right) K_A\big(\boldsymbol{X}, [0,y]\big) \, d\mathbb{P} \\
		&= \mathbb{P}(\Phi^{\boldsymbol{R}}(\boldsymbol{X}) \leq \boldsymbol{u}, Y \leq y) 
		= L(A)(u_1,u_2,\ldots, u_d,y),
	\end{align*}
	which completes the proof.
\end{proof}

\begin{lem}\label{lem:derivative}
	Suppose that $(U_1, \ldots, U_d, V) \sim C \in \mathcal{C}_{\Pi_d}^\rho$ and let $K_C(\cdot, \cdot)$ denote 
	the Markov kernel of $C$. Then, for every $v \in \mathbb{I}$ we have
	\begin{align}\label{eq:derivative}
		K_C(u_1, \ldots, u_d, [0,v]) = \frac{\partial^d C}{\partial u_1 \cdots \partial u_d} (u_1, \ldots, u_d, v)
	\end{align}
	for $\lambda^d$-a.e. $\boldsymbol{u} \in \mathbb{I}^d$. {The same result holds 
		for any ordering of the mixed partial derivatives in Eq. \eqref{eq:derivative}.}
\end{lem}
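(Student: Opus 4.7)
The plan is to reduce the statement to the Lebesgue differentiation theorem applied to the function $\boldsymbol{x}\mapsto K_C(\boldsymbol{x},[0,v])$ on the cube $\mathbb{I}^d$, using the fact that the $d$-dimensional marginal of $C$ is exactly $\Pi_d$, so that the measure $\mu_{C^d}$ coincides with $\lambda^d$ on $\mathbb{I}^d$.

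First I would fix $v\in\mathbb{I}$ and use the disintegration identity (1) quoted in the preliminaries with $G=[0,u_1]\times\cdots\times[0,u_d]\times[0,v]$. Since $\boldsymbol{x}$-sections of $G$ satisfy $G_{\boldsymbol{x}}=[0,v]$ for $\boldsymbol{x}\in[0,\boldsymbol{u}]$ and $G_{\boldsymbol{x}}=\emptyset$ otherwise, and since $C\in\mathcal{C}_{\Pi_d}^\rho$ forces $\mu_{C^d}=\lambda^d\rvert_{\mathbb{I}^d}$, this yields
\begin{align*}
C(u_1,\ldots,u_d,v) \;=\; \mu_C(G) \;=\; \int_{[0,\boldsymbol{u}]} K_C(\boldsymbol{x},[0,v])\, d\lambda^d(\boldsymbol{x}).
\end{align*}

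Next, for the fixed $v$ define $g_v(\boldsymbol{x}):=K_C(\boldsymbol{x},[0,v])$. Since $g_v$ takes values in $\mathbb{I}$ it is certainly in $L^1(\mathbb{I}^d,\lambda^d)$, so by the Lebesgue differentiation theorem $\lambda^d$-a.e. point $\boldsymbol{u}\in\mathbb{I}^d$ is a Lebesgue point of $g_v$. Writing the integral above as the iterated integral $\int_0^{u_1}\!\!\cdots\!\int_0^{u_d} g_v(x_1,\ldots,x_d)\, dx_d\cdots dx_1$ and differentiating once with respect to each $u_i$, the classical Lebesgue differentiation argument (applied inductively along each coordinate, using Fubini after each step to keep the integrand integrable) gives
\begin{align*}
\frac{\partial^d}{\partial u_1\cdots \partial u_d}\, C(u_1,\ldots,u_d,v) \;=\; g_v(\boldsymbol{u}) \;=\; K_C(\boldsymbol{u},[0,v])
\end{align*}
for $\lambda^d$-almost every $\boldsymbol{u}\in\mathbb{I}^d$, which is exactly the claim.

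The only mildly delicate point is interpreting the mixed partial: one should work at a (full-measure set of) Lebesgue points of $g_v$, where differentiating the iterated integral one coordinate at a time is legitimate and commutes in the order indicated. Everything else is routine disintegration plus $\mu_{C^d}=\lambda^d$, which is the whole reason we restrict to the linkage space $\mathcal{C}_{\Pi_d}^\rho$ (this restriction is essential — without uniformity of the $d$-dimensional margin, $\mu_{C^d}$ would not equal $\lambda^d$ and the identification of the partial derivative with the kernel would generally fail).
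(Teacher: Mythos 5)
Your first step---disintegration together with $\mu_{C^d}=\lambda^d\rvert_{\mathbb{I}^d}$ to obtain $C(\boldsymbol{u},v)=\int_{[0,\boldsymbol{u}]}K_C(\boldsymbol{x},[0,v])\,d\lambda^d(\boldsymbol{x})$---is correct and is also the basis of the paper's argument. The gap is in the differentiation step. You compute the mixed partial by differentiating the iterated integral one coordinate at a time and justify this by asserting that at a Lebesgue point of $g_v$ this is ``legitimate and commutes.'' That does not follow from the Lebesgue point property. Differentiating the iterated integral in, say, $u_d$ first requires the limit of
\begin{align*}
\frac{1}{h}\int_{[0,u_1]\times\cdots\times[0,u_{d-1}]\times[u_d,u_d+h]} g_v\,d\lambda^d ,
\end{align*}
an average over a thin slab whose eccentricity blows up as $h\to 0$. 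The Lebesgue point property only controls averages over sets shrinking to $\boldsymbol{u}$ \emph{nicely} (bounded eccentricity, e.g.\ hypercubes) and says nothing about such slabs; moreover the exceptional null set for the inner one-dimensional differentiation depends on the remaining coordinates, so the inductive Fubini bookkeeping you sketch does not close. Acknowledging the point as ``mildly delicate'' does not resolve it.

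The fix is to avoid the iterated interpretation altogether: read $\frac{\partial^d C}{\partial u_1\cdots\partial u_d}(\boldsymbol{u},v)$ as the limit of $\mu_C\bigl(R_N(\boldsymbol{u})\times[0,v]\bigr)/\lambda^d\bigl(R_N(\boldsymbol{u})\bigr)$ for a sequence of hypercubes $R_N(\boldsymbol{u})$ shrinking nicely to $\boldsymbol{u}$. By your disintegration identity this quotient is exactly the average of $g_v$ over $R_N(\boldsymbol{u})$, and the $d$-dimensional Lebesgue differentiation theorem gives convergence to $g_v(\boldsymbol{u})=K_C(\boldsymbol{u},[0,v])$ at every Lebesgue point, hence $\lambda^d$-a.e. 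This is precisely the paper's proof; your argument becomes correct once the differentiation step is carried out this way.
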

\begin{proof}
	{Since both, a proof for the existence of the mixed partial derivatives and the fact that the 
		ordering does not matter seems to be hard to find in the literature a proof is given in the Appendix (see \ref{proof:lem:derivative}).}
\end{proof}
Obviously every linkage $C \in \mathcal{C}_{\Pi_d}^\rho$ fulfills $L(C) = C$ and, consequently, we have 
$K_{L(C)}(\boldsymbol{x}, [0,y]) = K_C(\boldsymbol{x}, [0,y])$ for every $y \in \mathbb{I}$ and 
$\lambda^d$-a.e. $\boldsymbol{x} \in \mathbb{I}^d$.

Figure \ref{fig:exa2_sample} depicts a sample of size $n = 1.000$ drawn from a tri-variate copula $C$ 
underlying the following random vector $(\tilde{X}_1, \tilde{X}_2, \tilde{Y})$: The bivariate margin $(\tilde{X}_1, \tilde{X}_2)$ follows a Marshall Olkin copula $MO_{0.5,1}$ with parameter $\alpha = 0.5$ and $\beta = 1$, whereas $\tilde{Y}$ is defined by $\tilde{Y} = \tilde{X}_1 + \tilde{X}_2$. The graphics show how the dependence 
structure between the first and second coordinate is removed by applying the linkage operation. 
\begin{figure}[!ht]
	\centering
	\includegraphics[width=4.6cm, page = 1, trim=60 60 60 60, clip]{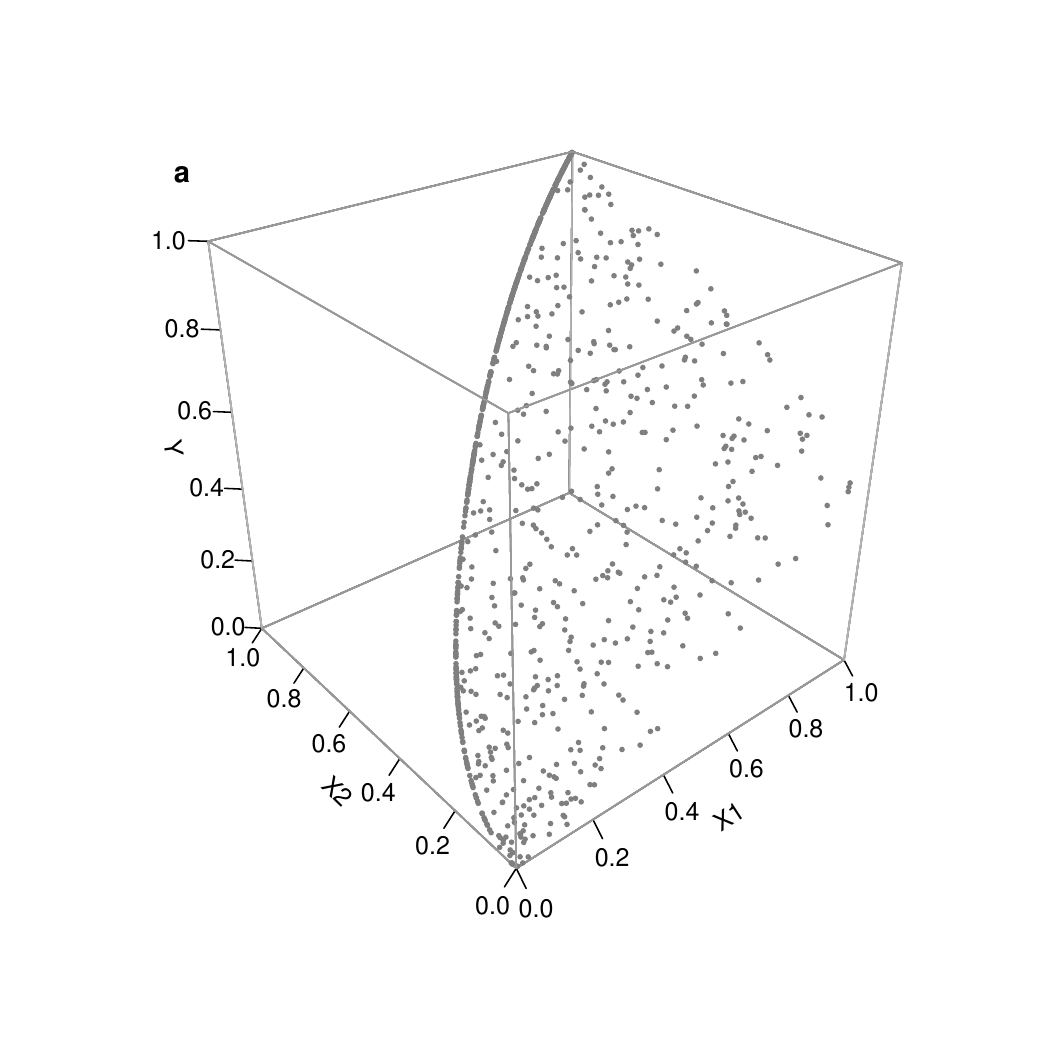} \hspace*{2cm}
	\includegraphics[width=4.6cm, page = 2, trim=60 60 60 60, clip]{exam2_3d.pdf}
	\includegraphics[width=12cm, page = 1]{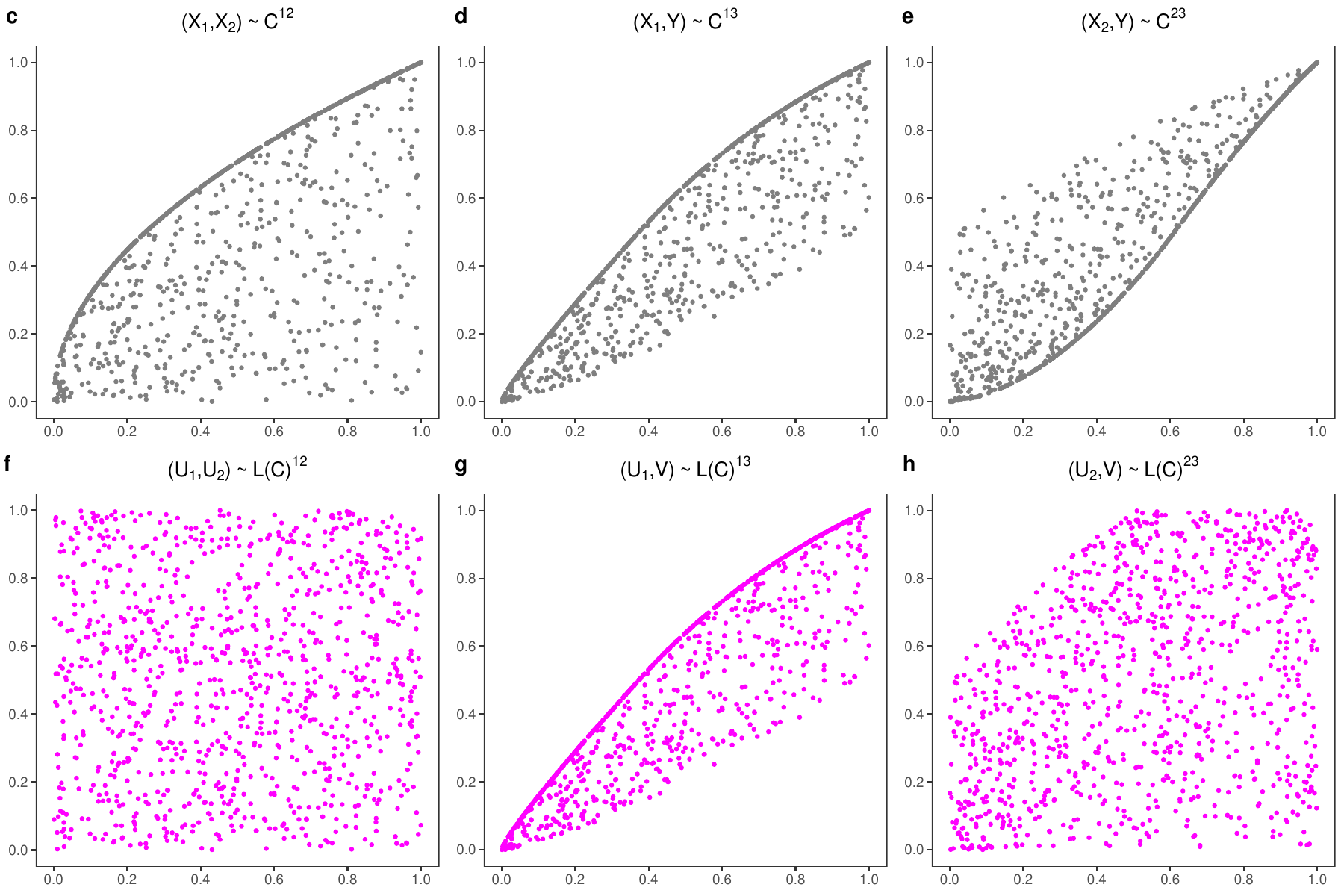}
	\caption{Sample of size $n=1.000$ drawn from the copula $C$ underlying to the random vector $(\tilde{X}_1,\tilde{X}_2, \tilde{Y})$, where $(\tilde{X}_1,\tilde{X}_2)$ follows a Marshall Olkin copula $MO_{0.5,1}$ with parameter $\alpha = 0.5$ and $\beta = 1$ and $\tilde{Y}$ is defined by $\tilde{X}_1 + \tilde{X}_2$ and its linkage $L(C_\alpha)$. (a) depicts the sample of $ (X_1,X_2,Y) \sim C$, (b) a sample of $(U_1,U_2,V) \sim L(C)$. (c-e) show the bivariate marginal 
		samples of $C$ and (f-h) the bivariate samples of the linkage $L(C)$, respectively.}
	\label{fig:exa2_sample}
\end{figure}

In general, deriving analytic formulas for $L(A)$ is intractable. For copulas $A \in \mathcal{C}^\rho$ 
satisfying the so-called conditional independence property (w.r.t. the first coordinate, also see \cite{bairamov})  
\begin{align}\label{simplified}
	K_A\left(x_1, \bigtimes_{i = 2}^\rho E_i \right) = \prod_{i = 2}^\rho K_{A^{1i}} (x_1,E_i) 
\end{align}
for $\lambda$-a.e. $x_1 \in \mathbb{I}$ and $E_2,\ldots, E_\rho \in \mathcal{B}(\mathbb{I})$, however, 
the linkage is easily expressable (see Proposition \ref{prop:condIndep}). 
Letting $\mathcal{C}^\rho_{\perp}$ denote the family of all copulas satisfying Eq. \eqref{simplified} 
it is straightforward to verify that there is a one-to-one correspondence between $\mathcal{C}^\rho_{\perp}$ 
and $\mathcal{C}^2 \times \mathcal{C}^2 \times \cdots \times \mathcal{C}^2=(\mathcal{C}^2)^d$. {The construction of higher dimensional copulas from copulas in $\mathcal{C}^2$ is also discussed in \cite{durante}[Section 5] via 
	the so-called $\mathbf{C}$-lifting - considering $\mathbf{C} = \{\Pi_2\}_{t \in \mathbb{I}}$ 
	establishes the link to Eq. \eqref{simplified}. Furthermore, the conditional independence property is also crucial 
	in the context of factor copula models, see, e.g., \cite{ansari2021,krupski2013, krupski2015} and the references therein. 
	In fact, Eq. \eqref{simplified} yields a $1$-factor copula model in which $X_2,\ldots, X_\rho$ are assumed to be conditionally independent given the random variable $X_1$.}\\
The following lemma will be useful for deriving the afore-mentioned expression for the linkage: 
\begin{lem}\label{lem:simplified}
	Suppose that $A \in \mathcal{C}^\rho$ fulfills the conditional independence property (w.r.t. to the first coordinate).
	Then for every $j \in \{2,\ldots, \rho\}$ we have
	\begin{enumerate}\setlength\itemsep{1em}
		\item $K_{A^j}(x_1, [0,x_2] \times \cdots \times [0,x_j]) = \prod_{i = 2}^j K_{A^{1i}}(x_1,[0,x_i])$ for $\lambda$-a.e. $x_1 \in \mathbb{I}$ and 
		\item $K_{A^j}(x_1,\ldots, x_{j-1}, [0,x_j]) = K_{A^{1j}}(x_1,[0,x_j])$ for $\mu_{A^{j-1}}$-a.e. $(x_1,\ldots, x_{j-1}) \in \mathbb{I}^{j-1}$. 
	\end{enumerate}
\end{lem}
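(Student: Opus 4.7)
The plan is to derive both parts from the conditional independence hypothesis \eqref{simplified} by linking the Markov kernels of the marginal copulas $A^j$ to the Markov kernel of $A$ itself, and then invoke uniqueness of Markov kernels via the disintegration theorem.

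For (1), I would first identify $K_{A^j}(x_1,\cdot)$ with the projection of $K_A(x_1,\cdot)$ onto the first $j-1$ coordinates: concretely, I claim that for any Borel set $G\subseteq\mathbb{I}^{j-1}$ the function $x_1\mapsto K_A(x_1,G\times\mathbb{I}^{\rho-j})$ is itself a Markov kernel of $A^j$. To verify this I would use disintegration to check
\[
\mu_{A^j}([0,x_1']\times G)=\mu_A([0,x_1']\times G\times\mathbb{I}^{\rho-j})=\int_{[0,x_1']}K_A(x_1,G\times\mathbb{I}^{\rho-j})\,d\lambda(x_1),
\]
which forces $K_{A^j}(x_1,G)=K_A(x_1,G\times\mathbb{I}^{\rho-j})$ for $\lambda$-a.e.\ $x_1$ by essential uniqueness of regular conditional distributions. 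Plugging $E_2,\ldots,E_j$ followed by $E_{j+1}=\cdots=E_\rho=\mathbb{I}$ into the conditional independence property \eqref{simplified}, and using $K_{A^{1i}}(x_1,\mathbb{I})=1$, then yields the desired factorization on product sets. (No further extension is needed for (1) since the statement is already formulated on rectangles $[0,x_2]\times\cdots\times[0,x_j]$.)

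For (2) the idea is to show both sides yield the same measure after integrating over arbitrary Borel sets in $\mathbb{I}^{j-1}$, and then invoke uniqueness of the Markov kernel of $A^j$ (with respect to its first $j-1$ coordinates). First I would upgrade (1) from rectangles $E_2\times\cdots\times E_{j-1}\times[0,x_j]$ to arbitrary measurable rectangles $G'\times[0,x_j]$ with $G'\in\mathcal{B}(\mathbb{I}^{j-2})$: the class of $G'$ for which the factorization
\[
K_{A^j}\bigl(x_1,G'\times[0,x_j]\bigr)=K_{A^{j-1}}(x_1,G')\cdot K_{A^{1j}}(x_1,[0,x_j])
\]
holds for $\lambda$-a.e.\ $x_1$ is a Dynkin system containing the generating $\pi$-system of rectangles, hence equals $\mathcal{B}(\mathbb{I}^{j-2})$. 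Applying this with $G'$ replaced by the $x_1$-section $G_{x_1}$ of an arbitrary Borel set $G\subseteq\mathbb{I}^{j-1}$ and integrating against $\lambda$ gives, via disintegration for $\mu_{A^j}$ and $\mu_{A^{j-1}}$,
\[
\mu_{A^j}\bigl(G\times[0,x_j]\bigr)=\int_\mathbb{I}K_{A^{j-1}}(x_1,G_{x_1})\,K_{A^{1j}}(x_1,[0,x_j])\,d\lambda(x_1)=\int_G K_{A^{1j}}(x_1,[0,x_j])\,d\mu_{A^{j-1}}.
\]
On the other hand, the defining disintegration property of $K_{A^j}$ gives $\mu_{A^j}(G\times[0,x_j])=\int_G K_{A^j}(x_1,\ldots,x_{j-1},[0,x_j])\,d\mu_{A^{j-1}}$. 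Comparing these two expressions for every Borel $G$ and applying the essential uniqueness of the Markov kernel of $A^j$ with respect to its first $j-1$ coordinates yields the claim $\mu_{A^{j-1}}$-a.e.

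The main obstacle I anticipate is bookkeeping rather than depth: one must handle the measure-zero exceptional sets carefully (the conditional independence property holds only $\lambda$-a.e., and the Markov kernel identifications are only a.e.\ unique), and the Dynkin-class extension needed to pass from product sets to general Borel sections must be made clean. Once those technicalities are in place, the factorization for (1) drops out immediately from \eqref{simplified}, and (2) follows by combining (1) with two applications of the disintegration theorem.
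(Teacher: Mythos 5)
Your proposal is correct and follows essentially the same route as the paper: part (1) by marginalizing the conditional independence property (setting the trailing coordinates to $1$ and using $K_{A^{1i}}(x_1,\mathbb{I})=1$), and part (2) by computing $\mu_{A^j}\bigl(G\times[0,x_j]\bigr)$ in two ways via disintegration and invoking essential uniqueness of the kernel. The only difference is that your detour in (2) through arbitrary Borel sets $G$ and their $x_1$-sections --- with the attendant quantifier subtlety of ``a null set for each $G'$'' versus ``one null set for all $G'$'', which you would need to resolve via a countable generating $\pi$-system --- is avoidable: since both sides are finite measures in $G$, it suffices to verify the identity on the rectangles $[0,x_1]\times\cdots\times[0,x_{j-1}]$, which is exactly what the paper does before concluding by Radon--Nikodym.
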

\begin{proof}
	Setting $x_{j+1} = 1,x_{j+2} = 1, x_{j+3} = 1, \ldots, x_{\rho} = 1$ and using the fact that $K_{A^{1i}}(x_1,[0,1]) = 1$ for every $i \in {2,\ldots, \rho}$ the first assertion follows. Using disintegration for every $(x_1,\ldots, x_{j}) \in \mathbb{I}^{j}$ on the one hand we have
	\begin{align*}
		A^j(x_1,\ldots, x_j) &= \int_{[0,x_1] \times \cdots \times [0,x_{j-1}]} K_{A^j}(s_1,\ldots, s_{j-1}, [0,x_j]) d\mu_{A^{j-1}}(s_1, \ldots, s_{j-1}),
	\end{align*}
	on the other hand, applying \eqref{simplified} and disintegration yields
	\begin{align*}
		A^j(x_1,\ldots, x_j) &= \int_{[0,x_1]} K_{A^{j}}(s_1,[0,x_2] \times \cdots \times [0,x_j]) \, d\lambda(s_1) \\
		&= \int_{[0,x_1]} K_{A^{1j}}(s_1,[0,x_j]) \prod_{i = 2}^{j-1}K_{A^{1i}}(s_1,[0,x_i])\, d\lambda(s_1) \\
		&= \int_{[0,x_1]} K_{A^{1j}}(s_1,[0,x_j]) K_{A^{j-1}}(s_1,[0,x_2] \times \cdots \times [0,x_{j-1}])\, d\lambda(s_1) \\
		&=\int_{[0,x_1] \times \cdots \times [0,x_{j-1}]} K_{A^{1j}}(s_1,[0,x_j]) d\mu_{A^{j-1}}(s_1, \ldots, s_{j-1}).
	\end{align*}
	Applying Radon-Nikodym therefore yields the second assertion.
\end{proof}

\begin{prop}\label{prop:condIndep}
	Suppose that $(X_1,\ldots, X_d, Y) \sim A \in \mathcal{C}^\rho$ and assume there is some coordinate $j \in \{1,\ldots, d\}$ such that $X_1, \ldots, X_{j-1}, X_{j+1}, \ldots, X_d,Y$ are conditionally independent given $X_j$. 
	Then the following identity holds for all $x_1, \ldots, x_d, y \in \mathbb{I}$:  
	\begin{align*}
		L(A)(x_1, \ldots, x_d, y) = A^{j\rho}(x_j,y) \Pi_{d}(x_1, \ldots, x_{j-1}, x_{j+1}, \ldots, x_d).
	\end{align*} 
\end{prop}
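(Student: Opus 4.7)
The plan is to express $L(A)(\boldsymbol{x},y)$ as a Lebesgue integral via Lemma~\ref{lem::MKLinkage}, simplify the Markov kernel appearing inside the integral by invoking the conditional independence hypothesis, and finally decouple the resulting integral into the desired product structure. Throughout, $\Psi_j$ denotes the $j$-th component of $\Psi$.

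First, Lemma~\ref{lem::MKLinkage} gives
\begin{align*}
L(A)(\boldsymbol{x},y) \;=\; \int_{[0,\boldsymbol{x}]} K_A\bigl(\Psi(\boldsymbol{u}),[0,y]\bigr)\,d\lambda^d(\boldsymbol{u}).
\end{align*}
The hypothesis that $X_1,\ldots,X_{j-1},X_{j+1},\ldots,X_d,Y$ are conditionally independent given $X_j$ yields, via an analog of Lemma~\ref{lem:simplified}(2) with the role of the first coordinate played by the $j$-th, the identity
\begin{align*}
K_A(\boldsymbol{z},[0,y]) \;=\; K_{A^{j\rho}}(z_j,[0,y])\qquad \text{for $\mu_{A^d}$-a.e.\ $\boldsymbol{z}\in\mathbb{I}^d$.}
\end{align*}
Since $\Psi$ pushes $\lambda^d$ forward to $\mu_{A^d}$ by Lemma~\ref{lem::psiphi}, this lifts to $K_A(\Psi(\boldsymbol{u}),[0,y]) = K_{A^{j\rho}}(\Psi_j(\boldsymbol{u}),[0,y])$ for $\lambda^d$-a.e.\ $\boldsymbol{u}\in\mathbb{I}^d$. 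Now observe that, by the recursive construction of $\Psi$, the component $\Psi_j(\boldsymbol{u})$ depends only on $u_1,\ldots,u_j$, so Fubini's theorem decouples
\begin{align*}
L(A)(\boldsymbol{x},y) \;=\; \Bigl(\prod_{i=j+1}^{d} x_i\Bigr)\cdot \int_{[0,x_1]\times\cdots\times[0,x_j]} K_{A^{j\rho}}\bigl(\Psi_j(u_1,\ldots,u_j),[0,y]\bigr)\,d\lambda^j.
\end{align*}

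The main obstacle is then to show that the remaining $j$-fold integral equals $x_1\cdots x_{j-1}\cdot A^{j\rho}(x_j,y)$. My plan here is to condition on $u_1,\ldots,u_{j-1}$, so that the points $z_i=\Psi_i(u_1,\ldots,u_i)$, $i<j$, are determined, and then apply the generalized PIT (Lemma~\ref{lem:rpit}) to the inner $u_j$-integral: since $\Psi_j(u_1,\ldots,u_j)=F^{-}_{j\mid 1\ldots j-1}(u_j\mid z_1,\ldots,z_{j-1})$, the change of variables converts the $u_j$-integration into an integration against the conditional law of $X_j$. Combined with disintegration and the uniformity of the marginal of the copula $A^{j\rho}$, together with the further consequences of the conditional independence hypothesis (in particular, that the kernels $K_{A^i}(\cdot,[0,x_i])$ for $i\leq j$ decouple appropriately), this should reduce the inner integral to $A^{j\rho}(x_j,y)$ times an integral which collapses to $x_1\cdots x_{j-1}$. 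The delicate point is aligning the integration region $[0,x_j]$ in $u_j$-space with the truncation of the pushforward measure induced by $\Psi_j$, which is precisely where the conditional independence must be exploited in full.
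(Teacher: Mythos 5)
Your route is genuinely different from the paper's: the paper stays on the probability space, unwinds $\Phi^{\boldsymbol{R}}(\boldsymbol{X})$ coordinate by coordinate and evaluates each conditional factor via the generalized PIT, whereas you push everything to the Lebesgue side through Lemma \ref{lem::MKLinkage} and the fact that $\Psi$ transports $\lambda^d$ to $\mu_{A^d}$. For $j=1$ your argument is complete and in fact cleaner than the paper's: since $\Psi_1(\boldsymbol{u})=F_1^-(u_1)=u_1$, the kernel identity $K_A(\Psi(\boldsymbol{u}),[0,y])=K_{A^{1\rho}}(u_1,[0,y])$ for $\lambda^d$-a.e.\ $\boldsymbol{u}$ (the second assertion of Lemma \ref{lem:simplified} applied to the full copula, lifted through the pushforward) reduces the whole computation to
\begin{align*}
L(A)(\boldsymbol{x},y)=\Bigl(\prod_{i=2}^{d}x_i\Bigr)\int_{[0,x_1]}K_{A^{1\rho}}(u_1,[0,y])\,d\lambda(u_1)=\Bigl(\prod_{i=2}^{d}x_i\Bigr)A^{1\rho}(x_1,y),
\end{align*}
with no PIT computation needed at all.

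The gap is exactly the ``delicate point'' you flag at the end, and it cannot be closed for $j\geq 2$: the remaining $j$-fold integral does \emph{not} equal $x_1\cdots x_{j-1}\,A^{j\rho}(x_j,y)$ in general. Concretely, take $\rho=3$, $j=2$ and $A=M_3$, so that $X_1=X_2=Y$ and $X_1,Y$ are (degenerately) conditionally independent given $X_2$. Then $K_{M_2}(u_1,\cdot)=\delta_{u_1}$ gives $\Psi_2(u_1,u_2)=u_1$, and your integral becomes $\int_0^{x_1}\int_0^{x_2}\ind_{[0,y]}(u_1)\,du_2\,du_1=x_2\min(x_1,y)$, whereas the claimed value is $x_1\min(x_2,y)$. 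This is not a defect of your method but of the statement itself: since $(U_1,V)\sim A^{1\rho}$ by construction of the linkage, $L(A)(x_1,1,\ldots,1,y)=A^{1\rho}(x_1,y)$ always holds, while the asserted formula for $j=2$ evaluated at $x_2=\cdots=x_d=1$ gives $x_1y$; so the conclusion for $j\neq 1$ would force $A^{1\rho}=\Pi_2$, which the hypothesis does not imply. The paper's proof hides this behind ``without loss of generality $j=1$'', but the linkage is not invariant under permutations of the coordinates of $\boldsymbol{X}$, so that reduction is not legitimate either. Restricting to $j=1$, your proof is essentially done; for general $j$ the step you left open is genuinely false, not merely delicate.
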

\begin{proof}
	Without loss of generality we may consider $j=1$. Using disintegration and change of coordinates as well as applying Lemma \ref{lem::MKLinkage} and Lemma \ref{lem:simplified} yields
	{\small
		\begin{align*}
			L(A)&(x_1, \ldots, x_d, y) = \int_{\Omega} \ind_{[0,\boldsymbol{x}]}\circ \Phi^{\boldsymbol{R}}(\boldsymbol{X})  K_{A}(\boldsymbol{X}, [0,y]) \, d\mathbb{P}\\
			&=\int_\Omega \int_{\mathbb{I}^d}  \ind_{[0,\boldsymbol{x}]} \circ \Phi^{\boldsymbol{R}(\omega)}(\boldsymbol{s}) K_A(\boldsymbol{s},[0,y]) d\mathbb{P}^{\boldsymbol{X}}(\boldsymbol{s}) \,  d\mathbb{P}(\omega) \\
			&=\int_\Omega \int_{\mathbb{I}} \int_{\mathbb{I}^{d-1}} \ind_{[0,\boldsymbol{x}]} \circ \Phi^{\boldsymbol{R}(\omega)}(\boldsymbol{s}) K_{A^{1\rho}}(s_1,[0,y]) K_{A^d}(s_1, d(s_2,\ldots, s_d)) d\lambda(s_1) d\mathbb{P}(\omega) \\
			&=\int_{[0,x_1]} \int_\Omega K_{A^{1\rho}}(s_1,[0,y]) \prod_{i = 2}^d \int_{\mathbb{I}} \ind_{[0,x_i]}(F_{i|1}^{R_i(\omega)}(s_i|s_1)) K_{A^{1i}}(s_1, ds_i) d\mathbb{P}(\omega) d\lambda(s_1) \\
			&=\int_{[0,x_1]} K_{A^{1\rho}}(s_1,[0,y]) \prod_{i = 2}^d x_i \, d\lambda(s_1)
			= A^{1\rho}(x_1, y)\Pi_{d-1}(x_2,\ldots, x_d),
		\end{align*}
	}
	which completes the proof.
\end{proof}

The conditional independence property in Eq. \eqref{simplified} may seem far to restrictive to be 
of any practical relevance. The following examples, however, show that both, 
the family of {empirical multilinear copulas} and the class of completely dependent copulas (according to \cite[Definition 1]{sanchez2015}) satisfy Eq. \eqref{simplified}, implying that the family of copulas 
satisfying Eq. \eqref{simplified} is dense in $(\mathcal{C}^\rho, d_\infty)$.

\begin{exam}\label{exa:empCop}
	Suppose that $(\boldsymbol{X}_1,Y_1) \ldots, (\boldsymbol{X}_n,Y_n)$ is a sample from
	$C \in \mathcal{C}^\rho$ and let $\hat{C}_n$ denote the resulting empirical copula obtained via multilinear interpolation of the subcopula (see, for instance, \cite{genest2017}). Since every $\rho$-dimensional empirical {multilinear} copula is universally simplified (see \cite[Theorem 7.1]{vinepaper}), Eq. \eqref{simplified} is obviously satisfied and Proposition \ref{prop:condIndep} directly yields 
	\begin{align*}
		L(\hat{C}_n)(x_1,\ldots, x_d, y) &=\hat{C}_n^{1\rho}(x_1,y) \cdot \Pi_{d-1}(x_2,\ldots, x_d)
	\end{align*}
	for all $(\boldsymbol{x}, y) \in \mathbb{I}^\rho$.
	
	Figure \ref{fig:exa3_empCop} depicts the density of a $3$-dimensional empirical {multilinear} copula $\hat{C}_n$ and its linkage $L(\hat{C}_n)$ for a sample of size $n=20$ drawn from $(\tilde{X}_1,\tilde{X}_2,\tilde{Y})$, whereby $(\tilde{X}_1,\tilde{X}_2) \sim MO_{0.3,1}$ and $\tilde{Y} = \tilde{X}_1 + \tilde{X}_2$ (top 2 panels).
	\begin{figure}[!ht]
		\centering
		\includegraphics[width=4.6cm, page = 1, trim=60 60 60 60, clip]{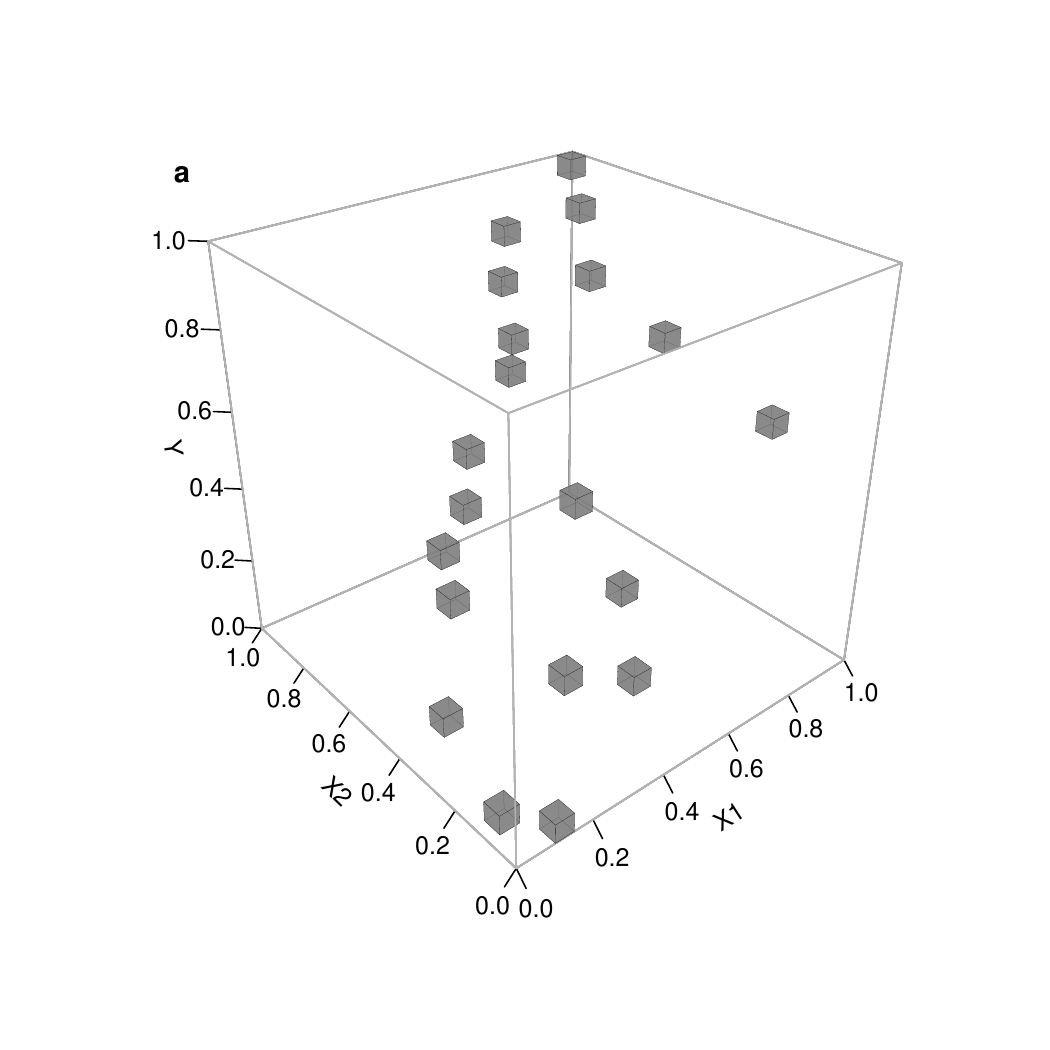} \hspace*{2cm}
		\includegraphics[width=4.6cm, page = 1, trim=60 60 60 60, clip]{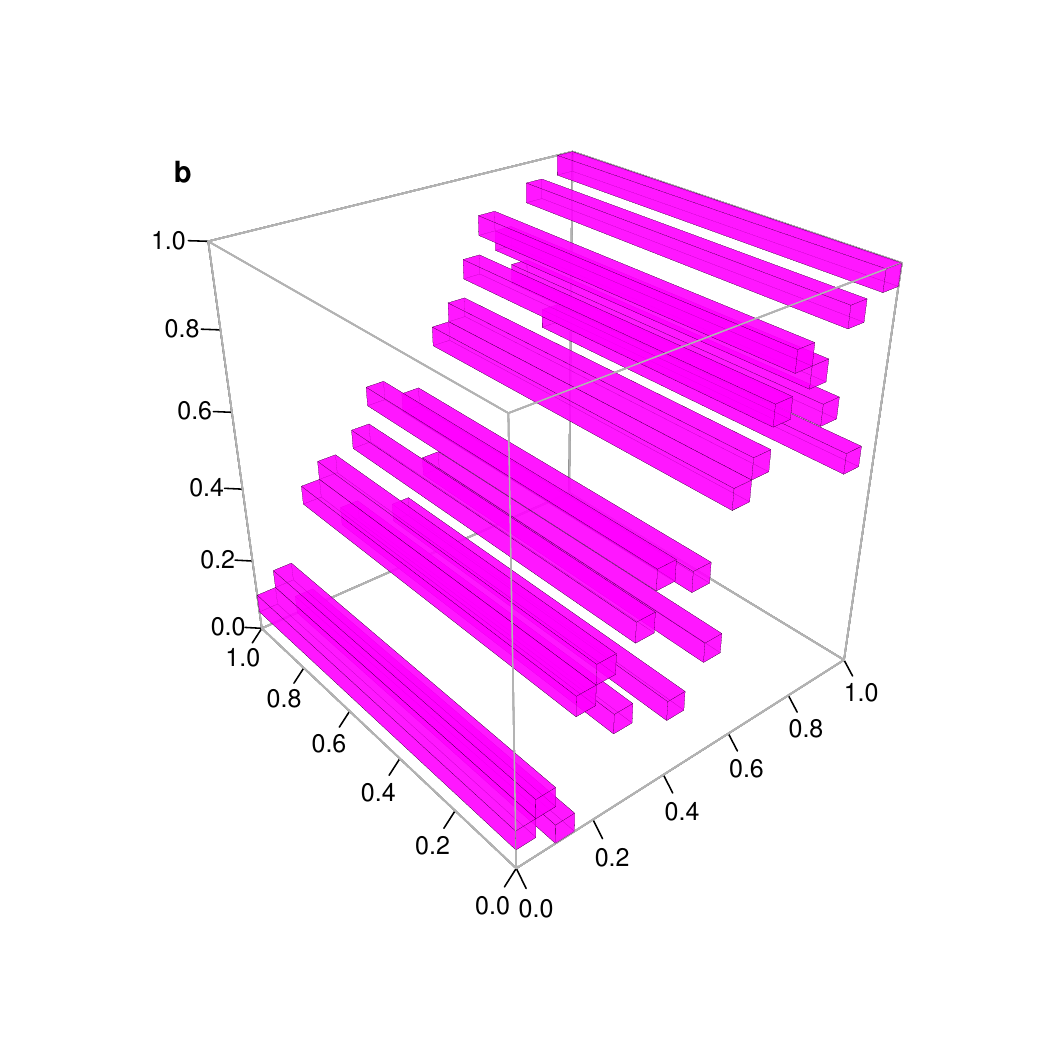}
		\includegraphics[width=12cm, page = 1]{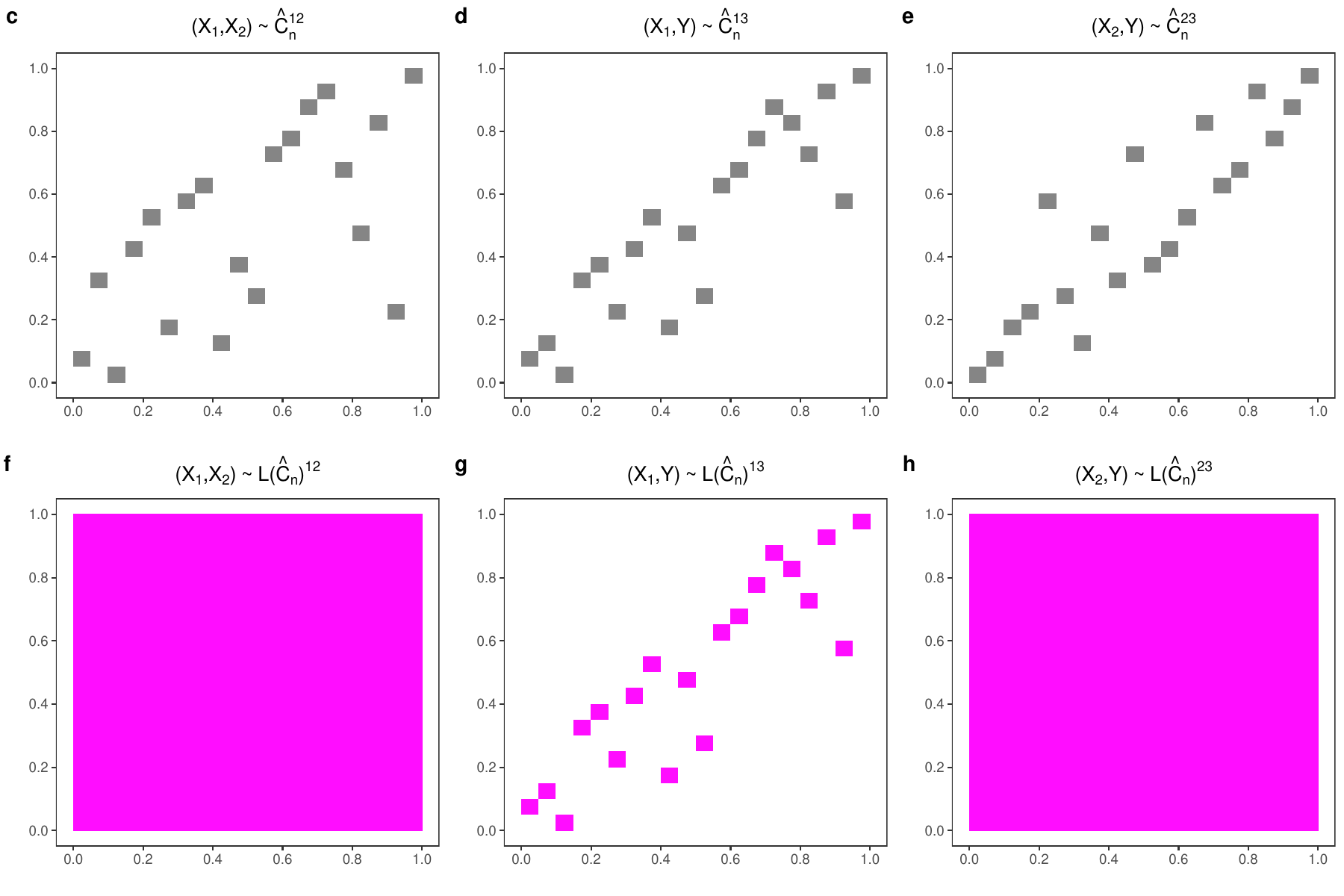}
		\caption{(a) Density of the empirical {multilinear} copula $\hat{C}_n \in \mathcal{C}^3$ of a sample of size $n = 20$ according to Example \ref{exa:empCop}. (b) Density of the linkage $L(\hat{C}_n)$ of the empirical {multilinear} copula $\hat{C}_n$. (c-e) Density of the bivariate margins of $\hat{C}_n$. (f-h) Density of the bivariate margins of $L(\hat{C}_n)$.}
		\label{fig:exa3_empCop}
	\end{figure}
\end{exam}

\begin{exam}
	Suppose that $(\boldsymbol{X},Y) \sim A \in \mathcal{C}^\rho$, whereby $\boldsymbol{X} \sim A^{d}$ is completely dependent (w.r.t. the first coordinate) according to Definition 1 in \cite{sanchez2015}, i.e., there exist $\lambda$-preserving transformations $S_2,S_3,\ldots, S_d:\mathbb{I} \to \mathbb{I}$ such that for 
	all $E \in \mathcal{B}(\mathbb{I}^{d-1})$ 
	$$K_{A^d}(x_1, E) = \ind_{E}(S_2(x_1), S_3(x_1), \ldots, S_d(x_1))$$
	is a regular conditional distribution of $A^d$. In particular, for every $j \in \{2,\ldots, d\}$ and $E_j \in \mathcal{B}(\mathbb{I})$ we have that $K_{A^{1j}}(x_1, E_j) = \ind_{E_j} \circ S_j(x_1)$ holds 
	for $\lambda$-a.e. $x_1 \in \mathbb{I}$ and every $j \in \{2,\ldots, d\}$. Hence using disintegration yields  
	\begin{align*}
		K_{A^{1\rho}}(x_1, E_\rho) &= \int_{\mathbb{I}^{d-1}} K_A(x_1,s_2,\ldots, s_d, E_\rho) K_{A^d}(x_1, d(s_2,\ldots, s_d)) \\
		&= K_A(x_1, S_2(x_1), \ldots, S_d(x_1), E_\rho) 
	\end{align*}
	for every fixed $E_\rho \in \mathcal{B}(\mathbb{I})$ and $\lambda$-almost every $x_1 \in \mathbb{I}$.
	Furthermore, again by using disintegration we get  
	\begin{align*}
		\int_{[0,x_1]} K_A&\left(s_1, \bigtimes_{j = 2}^\rho E_j\right) d\lambda(s_1) = \mu_A\left([0,x_1] \times \left(\bigtimes_{j=2}^\rho E_j\right)\right)\\
		& = \int_{[0,x_1] \times \left(\bigtimes_{j = 2}^d E_j\right)} K_A(s_1,s_2, \ldots, s_d, E_\rho) d\mu_{A^{d}}(s_1,\ldots, s_d)\\
		&= \int_{[0,x_1]} \int_{\bigtimes_{j = 2}^d E_j} K_A(\boldsymbol{s}, E_\rho) K_{A^d}(s_1, d(s_2,\ldots, s_d)) d\lambda(s_1) \\
		&=\int_{[0,x_1]} K_A(s_1, S_2(s_1), \ldots, S_d(s_1), E_\rho)  \prod_{j=2}^d \ind_{E_j}\circ S_j(s_1) d\lambda(s_1)\\
		&=\int_{[0,x_1]} \prod_{j=2}^\rho K_{A^{1j}}(s_1, E_j) d\lambda(s_1)
	\end{align*} 
	for every $x_1 \in \mathbb{I}$ implying Eq. \eqref{simplified} via Radon-Nikodym theorem. 
	Altogether this shows   
	$$L(A)(\boldsymbol{x},y) = A^{1\rho}(x_1,y)\Pi_{d-1}(x_2,\ldots, x_d).$$
\end{exam}

\noindent {Studying the map $L$ in more detail} it is possible to find copulas $A,B \in \mathcal{C}^\rho$ such that $d_\infty(A,B)$ { is strikingly large where although $d_\infty(L(A),L(B))=0$ holds.} In the $3$-dimensional setting, for instance, define $A \in \mathcal{C}^3$ implicitly by $\mu_A := \mu_M \otimes \lambda$ and $B \in \mathcal{C}^3$ by $\mu_B:=\mu_W \otimes \lambda$, whereby $M$ denotes the upper and $W$ the lower Fr\'echet-Hoeffding bound. It is straightforward to verify that $d_\infty(A,B)=1/2$ holds, {corresponding to 75\% of the diameter of $(\mathcal{C}^3, d_\infty)$}; on the other hand we have $L(A)=\Pi_3=L(B)$, implying $d_\infty(L(A),L(B))=0$. Conversely, if $d_\infty(A,B)$ is small, one might expect that so is $d_\infty(L(A), L(B))$. The following theorem shows that this conjecture is wrong.   

\begin{thm}\label{thm:discont}
	The linkage operation $L:\mathcal{C}^\rho \to \mathcal{C}_{\Pi_d}^\rho$ is not continuous w.r.t. $d_\infty$. 
\end{thm}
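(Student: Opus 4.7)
The plan is to construct an explicit sequence $(A_n)_{n \in \mathbb{N}}$ in $\mathcal{C}^3$ with $A_n \to A$ in $d_\infty$ for which $L(A_n)$ does not converge to $L(A)$ in $d_\infty$. The underlying intuition is that shuffles of the Fr\'echet--Hoeffding bound $M$ can approximate the product copula $\Pi$ arbitrarily well in $d_\infty$ even though they encode a strict functional dependence; planting such a shuffle into two of the three coordinates lets the Rosenblatt transform $\Psi$ --- which evaluates pseudo-inverses of atomic conditional distributions --- behave very differently from the Rosenblatt transform of the regular $d_\infty$-limit.

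Concretely, I would fix a sequence of Lebesgue-preserving shuffles $T_n \colon \mathbb{I} \to \mathbb{I}$ (for instance, the classical $n$-piece straight interval-exchange maps) whose associated bivariate copulas $C_{T_n}(x,y) := \lambda\bigl([0,x] \cap T_n^{-1}[0,y]\bigr)$ satisfy $d_\infty(C_{T_n}, \Pi) \to 0$. Define $A_n \in \mathcal{C}^3$ as the copula of $(U, T_n(U), T_n(U))$, with $U$ uniform on $\mathbb{I}$, so that its mass is concentrated on the curve $\{(t, T_n(t), T_n(t)) : t \in \mathbb{I}\}$. A short calculation yields $A_n(x_1,x_2,y) = C_{T_n}\bigl(x_1, \min(x_2, y)\bigr)$, and therefore $d_\infty(A_n, A) \leq d_\infty(C_{T_n}, \Pi) \to 0$, where $A(x_1,x_2,y) := x_1 \cdot M(x_2, y)$ is the copula of $(U, V, V)$ with $U, V$ independent and uniform.

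To read off both linkages I would apply Proposition \ref{prop:condIndep}. Each $A_n$ satisfies the conditional independence property with respect to the first coordinate, since given $X_1 = x_1$ both $X_2$ and $Y$ equal the constant $T_n(x_1)$; the proposition (with $j=1$) then gives $L(A_n)(x_1, x_2, y) = A_n^{13}(x_1, y) \cdot x_2 = C_{T_n}(x_1, y) \cdot x_2$, which converges in $d_\infty$ to $\Pi_3$. In contrast, $A$ has the conditional independence property with respect to the second coordinate (given $X_2$, $X_1$ is uniform-independent and $Y$ is deterministic), so the same proposition with $j=2$ yields $L(A)(x_1, x_2, y) = A^{23}(x_2, y) \cdot x_1 = M(x_2, y) \cdot x_1$. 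Evaluating the two candidate limits at $(1, 1/2, 1/2)$ returns $\Pi_3 = 1/4$ versus $M(1/2,1/2) = 1/2$, so $L(A_n) \not\to L(A)$ in $d_\infty$ and the linkage operation $L$ cannot be continuous.

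The main obstacle is not the limiting computation but the bookkeeping around the shuffle construction: one must choose the $T_n$ so that $d_\infty(C_{T_n}, \Pi) \to 0$ (a classical feature of straight shuffles of $M$) and then verify that the singular $A_n$ really do satisfy the hypothesis of Proposition \ref{prop:condIndep} with $j=1$, which amounts to noting that a measure-preserving bijection collapses the conditional distribution of $(X_2, Y)$ given $X_1$ to a point mass and hence makes it a product of point masses. With these two points in hand the discontinuity is read off directly from the two displayed formulas.
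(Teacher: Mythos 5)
Your proposal is correct, and it exploits exactly the same mechanism as the paper's proof: the linkage operation replaces any (possibly singular) dependence among the $X$-coordinates by independence, so a sequence of copulas whose $X$-margin is singular can $d_\infty$-approximate a copula $A\in\mathcal{C}_{\Pi_2}^3$ with $A^{23}\neq\Pi_2$ while their linkages all have $\Pi_2$ as $(2,3)$-margin. The difference lies in the choice of approximating sequence. The paper picks an arbitrary $A\in\mathcal{C}_{\Pi_2}^3$ with $A^{23}\neq\Pi_2$ and invokes the density of empirical copulas in $(\mathcal{C}^3,d_\infty)$ (citing Durante and Fern\'andez-S\'anchez) together with Example \ref{exa:empCop}, which gives $L(A_n)^{23}=\Pi_2$ for empirical $A_n$; the conclusion then follows from $d_\infty(L(A_n)^{23},A^{23})\to d_\infty(\Pi_2,A^{23})>0$. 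You instead take the explicit target $A=\text{law of }(U,V,V)$ (which indeed lies in $\mathcal{C}_{\Pi_2}^3$ with $A^{23}=M$, so $L(A)=A$) and approximate it by the singular copulas of $(U,T_n(U),T_n(U))$; these fall under the completely dependent case of the paper's second example, so the conditional-independence hypothesis of Proposition \ref{prop:condIndep} with $j=1$ is satisfied (the conditional law given $X_1=x_1$ is the point mass $\delta_{(T_n(x_1),T_n(x_1))}$, which trivially factorizes), yielding $L(A_n)=C_{T_n}(x_1,y)\,x_2\to\Pi_3$. All your intermediate claims check out: $A_n(x_1,x_2,y)=C_{T_n}(x_1,\min(x_2,y))$ gives $d_\infty(A_n,A)\le d_\infty(C_{T_n},\Pi)\to 0$, and the evaluation at $(1,\tfrac12,\tfrac12)$ separates the limits. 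What your version buys is self-containedness (no appeal to the external density theorem for empirical copulas, and $L(A)=A$ can be read off directly from $A\in\mathcal{C}_{\Pi_2}^3$ without even invoking the proposition for $j=2$); what the paper's version buys is generality, since it shows the discontinuity occurs at \emph{every} $A\in\mathcal{C}_{\Pi_2}^3$ with $A^{23}\neq\Pi_2$, not just at one explicitly constructed point.
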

\begin{proof}
	We construct a sequence of $3$-dimensional copulas $A_n$ which converges to $A$ w.r.t. $d_\infty$, however, $L(A_n)$ does not converge to $L(A)$.   
	Suppose that $A \in \mathcal{C}_{\Pi_2}^3 \subseteq \mathcal{C}^3$ fulfills $A^{23} \neq \Pi_2$. Since $A \in \mathcal{C}_{\Pi_2}^3$ we have $L(A)=A$. 
	According to \cite[Proposition 3.2]{durante2010} we can find a sequence of empirical {multilinear} copulas $(A_n)_{n \in \mathbb{N}}$ such that $\lim_{n \to \infty} d_\infty(A_n,A)=0$. Considering Example \ref{exa:empCop} 
	we know that $L(A_n)^{23} = \Pi_2$, so altogether we get 
	\begin{align*}
		\liminf_{n \to \infty}d_\infty(L(A_n), L(A)) \geq  \liminf_{n \to \infty}d_\infty(L(A_n)^{23}, L(A)^{23}) =  d_\infty(\Pi_2, A^{23}) > 0.
	\end{align*}  
	Extending this construction in arbitrary dimensions $\rho \geq 4$ is straightforward.
\end{proof}

\section{The metric space $(\mathcal{C}_{\Pi_d}^\rho,D_1)$}

We now tackle the extension of the dependence measure $\zeta^1$ introduced in \cite{trutschnig2011} to the multivariate setting and start with introducing the pseudometrics $D_1,D_\infty$ and $D_p$ on $\mathcal{C}^\rho$ whose restrictions
to $\mathcal{C}_{\Pi_d}^\rho$ are metrics. 

\begin{defi} Suppose that $A,B \in \mathcal{C}^\rho$ and let $L(A), L(B)$ denote the linkage of $A$ and $B$ according to 
	Eq. \eqref{eq:link}, respectively. Then the pseudometrics $D_1, D_\infty, D_p: \mathcal{C}^\rho \to [0,1]$ are defined by
	\begin{align}\label{def::D_1}
		D_1(A,B)  &:= \int_{\mathbb{I}} \int_{\mathbb{I}^d} \lvert K_{L(A)}(\boldsymbol{x},[0,y]) - K_{L(B)}(\boldsymbol{x},[0,y])\rvert d\lambda^d(\boldsymbol{x})d\lambda(y) \\
		D_\infty(A,B) &:= \sup_{y \in \mathbb{I}} \int_{\mathbb{I}^d} \lvert K_{L(A)}(\boldsymbol{x},[0,y]) - K_{L(B)}(\boldsymbol{x},[0,y])\rvert d\lambda^d(\boldsymbol{x}) \label{def::D_infty}
	\end{align}
	as well as
	\begin{align}\label{def::D_p}
		D_p^p(A,B) &:= \int_{\mathbb{I}} \int_{\mathbb{I}^d} \left| K_{L(A)}(\boldsymbol{x},[0,y]) - K_{L(B)}(\boldsymbol{x},[0,y])\right|^p d\lambda^d(\boldsymbol{x}) d\lambda(y) 
	\end{align}
	for $p \in (1,\infty)$. 
\end{defi}
As in \cite{trutschnig2011} in the sequel we will mainly work with $D_1$ which can be interpreted as 
$L^1$-distance of the conditional distribution functions of the corresponding linkages. 
To simplify notation we will also write 
\begin{align}
	\phi_{A,B}(y):= \int_{\mathbb{I}^d} \lvert K_{L(A)}(\boldsymbol{x},[0,y]) - K_{L(B)}(\boldsymbol{x},[0,y])\rvert d\lambda^d(\boldsymbol{x}) \label{def::phi}
\end{align}
for all $A,B \in \mathcal{C}^\rho$. 
\begin{remk}
	Since every copula $A \in \mathcal{C}^2$ fulfills $A = L(A)$, in the bivariate setting the pseudometrics $D_1,D_\infty$ and $D_p$ coincide with the metrics defined in \cite{trutschnig2011}. {Notice that in the 
		$\rho$-dimensional setting the pseudometrics defined according to Eq. \eqref{def::D_p} 
		are conceptionally different to those studied in \cite{sanchez2015} - the latter consider conditioning on only 
		one variable (hence assuring identical distributions of the conditioning variable), whereas here we 
		condition on $d$-variables in order to quantify the influence of a 
		$d$-dimensional random vector $\boldsymbol{X}$ on a random variable $Y$.}
\end{remk}

The following lemma shows that $D_1,D_p, D_\infty$ defined according to \eqref{def::D_1}, \eqref{def::D_infty} and \eqref{def::D_p} are indeed metrics on the family of linkages and pseudometrics on $\mathcal{C}^\rho$. The proof essentially follows \cite[Lemma 4]{trutschnig2011} with some minor adaptations and is, therefore, deferred to the Appendix \ref{app:proofs}.

\begin{lem} \label{lem:metric}
	$D_1$, $D_\infty$ and $D_p$ defined according to \eqref{def::D_1}, \eqref{def::D_infty} and \eqref{def::D_p} are metrics on  $\mathcal{C}_{\Pi_d}^{\rho}$ and pseudo-metrics on $\mathcal{C}^\rho$. 
\end{lem}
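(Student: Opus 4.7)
The plan is to verify the standard metric axioms for $D_1$ (and analogously for $D_p$, $D_\infty$) on $\mathcal{C}_{\Pi_d}^\rho$, and then argue that on the larger space $\mathcal{C}^\rho$ only the separation axiom fails. Non-negativity and boundedness of all three quantities by $1$ are immediate since the integrand $|K_{L(A)}(\boldsymbol{x},[0,y])-K_{L(B)}(\boldsymbol{x},[0,y])|$ takes values in $[0,1]$ and the domain of integration has $\lambda^\rho$-measure one; symmetry is equally obvious. For the triangle inequality of $D_1$ and $D_\infty$ I would simply insert a copula $C$ into the absolute value via $|f-g|\leq |f-h|+|h-g|$ applied pointwise to $K_{L(A)},K_{L(C)},K_{L(B)}$, and then integrate or take supremum, respectively; for $D_p$ Minkowski's inequality in $L^p(\mathbb{I}^\rho)$ applied to the functions $(\boldsymbol{x},y)\mapsto K_{L(A)}(\boldsymbol{x},[0,y])$ yields the desired bound.

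The heart of the argument is the separation property, which is also the step I expect to be the main obstacle since it requires care about the almost-everywhere ambiguity of Markov kernels. Suppose $A,B\in\mathcal{C}_{\Pi_d}^\rho$ and $D_1(A,B)=0$. Then the non-negative function $\phi_{A,B}$ defined in \eqref{def::phi} vanishes $\lambda$-a.e., and by Fubini there exists a Borel set $N\subseteq\mathbb{I}$ of full $\lambda$-measure such that for every $y\in N$ one has $K_{L(A)}(\boldsymbol{x},[0,y])=K_{L(B)}(\boldsymbol{x},[0,y])$ for $\lambda^d$-a.e.\ $\boldsymbol{x}\in\mathbb{I}^d$. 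Because $A,B\in\mathcal{C}_{\Pi_d}^\rho$ satisfy $L(A)=A$ and $L(B)=B$, so that the $d$-marginal of each is $\Pi_d$, formula (2) of Section~2 yields
\begin{equation*}
A(\boldsymbol{u},y)=\int_{[0,\boldsymbol{u}]}K_A(\boldsymbol{x},[0,y])\,d\lambda^d(\boldsymbol{x}),\qquad B(\boldsymbol{u},y)=\int_{[0,\boldsymbol{u}]}K_B(\boldsymbol{x},[0,y])\,d\lambda^d(\boldsymbol{x}),
\end{equation*}
so that $A(\boldsymbol{u},y)=B(\boldsymbol{u},y)$ for every $\boldsymbol{u}\in\mathbb{I}^d$ and every $y\in N$. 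Density of $N$ in $\mathbb{I}$ together with right-continuity of copulas in the last coordinate extends the identity to all of $\mathbb{I}^\rho$, giving $A=B$. The same argument works verbatim for $D_\infty$ (use that $\phi_{A,B}(y)=0$ for every $y$) and for $D_p$ (use that $|K_{L(A)}-K_{L(B)}|^p$ and $|K_{L(A)}-K_{L(B)}|$ vanish on the same set).

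Finally, for the claim that on the full space $\mathcal{C}^\rho$ one only obtains pseudometrics I would note that all three functionals depend on $A,B$ solely through $L(A),L(B)$, so whenever two distinct copulas share the same linkage (which exists in abundance, as illustrated by the explicit example in the paragraph preceding Theorem~\ref{thm:discont} with $\mu_M\otimes\lambda$ and $\mu_W\otimes\lambda$ both mapping to $\Pi_3$) the distance between them is zero. Since $L(C)\in\mathcal{C}_{\Pi_d}^\rho$ for every $C\in\mathcal{C}^\rho$ and $L$ is the identity on $\mathcal{C}_{\Pi_d}^\rho$, the remaining axioms on $\mathcal{C}^\rho$ follow directly from those on $\mathcal{C}_{\Pi_d}^\rho$. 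The most delicate bookkeeping issue throughout will be keeping track of the $\lambda^d$-a.e.\ quantifier on $\boldsymbol{x}$ when passing from the equality of kernels to the equality of copula values, which the uniformity of the $d$-marginal on $\mathbb{I}^d$ (i.e.\ $A,B\in\mathcal{C}_{\Pi_d}^\rho$) precisely allows via the integral representation above.
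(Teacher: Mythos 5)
Your separation argument is essentially the same Fubini-plus-density-plus-right-continuity idea as the paper's, merely with the quantifiers on $\boldsymbol{x}$ and $y$ in the opposite order: the paper fixes $\lambda^d$-a.e.\ $\boldsymbol{x}$ and uses that two right-continuous, non-decreasing functions of $y$ agreeing on a dense set must coincide, then integrates to deduce $L(A)=L(B)$; you fix $\lambda$-a.e.\ $y$, integrate the kernel identity over $[0,\boldsymbol{u}]$ to get $A(\boldsymbol{u},y)=B(\boldsymbol{u},y)$, and then invoke continuity of copulas in the last slot. Both routes are valid, and your remarks on the triangle inequality (Minkowski for $D_p$), on symmetry, and on the pseudometric status via $D_1(A,B)=D_1(L(A),L(B))$ are all correct.

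The one substantive piece the paper proves that you skip over is the joint Borel measurability of $(\boldsymbol{x},y)\mapsto K_{L(A)}(\boldsymbol{x},[0,y])$. The kernel property gives measurability in $\boldsymbol{x}$ for each fixed $y$, and monotonicity/right-continuity in $y$ for each fixed $\boldsymbol{x}$, but joint measurability does not follow for free; the paper establishes it by writing $H^{-1}\bigl([0,z)\bigr)$ as the countable union $\bigcup_{q\in\mathbb{Q}\cap\mathbb{I}}\{\boldsymbol{x}:H(\boldsymbol{x},q)<z\}\times[0,q]$ and using right-continuity in $y$. Without this, the integral in \eqref{def::D_1} is not a priori well-defined as a $\lambda^\rho$-integral, and the Fubini step in your separation argument -- which produces the full-measure set $N$ of good $y$'s -- is not justified. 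You should supply this measurability argument (or cite a standard fact about Carath\'eodory functions) before invoking Fubini; once that is in place, your proof is complete and agrees in substance with the paper's.
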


As in the bivariate setting the following inequalities, the proof of which is analogous to \cite[Lemma 5 and Theorem 6]{trutschnig2011} and can therefore be found in the Appendix, hold:

\begin{prop}\label{prop:ineq}
	For every pair $A, B \in \mathcal{C}^\rho$ the function $\phi_{A,B}$ defined according to 
	Eq.  \eqref{def::phi} is Lipschitz-continuous with Lipschitz constant $2$. Moreover 
	the following inequalities hold:
	\begin{enumerate}[parsep = 1ex]
		\item $d_\infty(L(A),L(B)) \leq D_\infty(A,B)$
		\item $D_1(A,B) \leq D_\infty(A,B) \leq 2\sqrt{D_1(A,B)}$
		\item $D_p^p(A,B) \leq D_1(A,B) \leq D_p(A,B)$
	\end{enumerate}
\end{prop}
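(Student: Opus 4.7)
The plan is to establish the Lipschitz property first, then derive the three numbered inequalities using this property together with the disintegration identity (equation (2) in the excerpt) and standard integral inequalities.

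For the Lipschitz bound, I would fix $y_1 \leq y_2$ in $\mathbb{I}$ and apply the reverse triangle inequality $\bigl||a|-|b|\bigr| \leq |a-b|$ pointwise in $\boldsymbol{x}$ to obtain
\begin{align*}
\phi_{A,B}(y_2) - \phi_{A,B}(y_1) \leq \int_{\mathbb{I}^d} \left| K_{L(A)}(\boldsymbol{x}, (y_1, y_2]) - K_{L(B)}(\boldsymbol{x}, (y_1, y_2]) \right| d\lambda^d(\boldsymbol{x}).
\end{align*}
A further triangle inequality replaces the difference with the sum of the two kernel masses, and the key step is then to use that $L(A), L(B) \in \mathcal{C}_{\Pi_d}^\rho$: by disintegration $\int_{\mathbb{I}^d} K_{L(A)}(\boldsymbol{x}, (y_1, y_2]) d\lambda^d(\boldsymbol{x}) = \mu_{L(A)}(\mathbb{I}^d \times (y_1, y_2]) = y_2 - y_1$, and analogously for $L(B)$. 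As a useful byproduct (taking $y_1 = 0$ or $y_2 = 1$ and noting that the last univariate marginal of a copula is uniform, hence atomless at $0$), one obtains $\phi_{A,B}(0) = \phi_{A,B}(1) = 0$.

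For inequality (1), I would represent $L(A)(\boldsymbol{u},v) - L(B)(\boldsymbol{u}, v)$ via the integral form of Lemma \ref{lem::MKLinkage}, pull the absolute value inside, enlarge the domain from $[0, \boldsymbol{u}]$ to $\mathbb{I}^d$, and take suprema; this yields $d_\infty(L(A), L(B)) \leq \sup_v \phi_{A,B}(v) = D_\infty(A,B)$. For (2), the bound $D_1 \leq D_\infty$ is immediate since $\phi_{A,B} \leq D_\infty$ pointwise. The nontrivial bound $D_\infty \leq 2\sqrt{D_1}$ is where the Lipschitz property pays off: since $\phi_{A,B}$ is continuous on the compact interval $\mathbb{I}$, it attains its maximum $M := D_\infty(A,B)$ at some $y^* \in \mathbb{I}$, and the boundary vanishing together with the Lipschitz inequality forces $y^* \in [M/2,\, 1 - M/2]$, so $\phi_{A,B}(y) \geq M - 2|y - y^*|$ on $[y^* - M/2,\, y^* + M/2] \subseteq \mathbb{I}$. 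Integrating this triangular lower bound gives $D_1 \geq M^2/2$, i.e., $D_\infty \leq \sqrt{2 D_1} \leq 2\sqrt{D_1}$.

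Finally, for (3), the estimate $D_p^p \leq D_1$ follows because $|K_{L(A)} - K_{L(B)}| \in [0,1]$, so raising to any $p \geq 1$ only decreases the integrand pointwise. The reverse inequality $D_1 \leq D_p$ is Jensen's inequality on the probability space $(\mathbb{I}^\rho, \lambda^\rho)$ applied to the convex function $t \mapsto t^p$. I expect the main obstacle to be the $D_\infty \leq 2\sqrt{D_1}$ bound, as it requires combining the Lipschitz property, the boundary vanishing of $\phi_{A,B}$, and a careful triangular-area estimate; everything else reduces to routine integral manipulations and standard elementary inequalities.
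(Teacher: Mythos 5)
Your proof is correct and takes essentially the same route as the paper's, with one small refinement that is worth pointing out. The paper also proves the Lipschitz bound by estimating $\int |K_{L(A)}(\boldsymbol{x},(s,t])-K_{L(B)}(\boldsymbol{x},(s,t])|\,d\lambda^d$ but packages it via Scheff\'e's theorem rather than your plain triangle inequality --- both give the constant $2$ with no real difference in substance. Inequalities (1) and (3) are argued exactly as you do (disintegration and enlarging the domain for (1), boundedness in $[0,1]$ together with H\"older/Jensen for (3)). The only point of genuine divergence is (2): the paper places a \emph{single} half-width triangle of base $M/2$ (and has to argue ``$\Delta_1$ or $\Delta_2$'' depending on which side of $y_0$ fits in $[0,1]$), yielding $D_1\ge M^2/4$. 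You instead observe $\phi_{A,B}(0)=\phi_{A,B}(1)=0$ and combine it with the Lipschitz constant to force the maximizer into $[M/2,1-M/2]$, so the \emph{full} symmetric triangle fits and you get $D_1\ge M^2/2$, i.e.\ $D_\infty\le\sqrt{2D_1}$, a slightly sharper constant than the stated $2\sqrt{D_1}$. Your byproduct observation that the last marginal of a linkage is uniform (hence atomless at $0$) is the one ingredient the paper does not use; it buys you the improved constant at essentially no extra cost.
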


Proposition \ref{prop:ineq}  yields the following direct consequence. 

\begin{cor} For $A,A_1,A_2,\ldots \in \mathcal{C}^\rho$ the following three conditions are equivalent:
	\begin{enumerate}
		\item $\lim_{n \to \infty} D_1(A_n,A) = 0$,
		\item $\lim_{n \to \infty} D_\infty(A_n,A) = 0$,
		\item $\lim_{n \to \infty} D_p(A_n,A) = 0$.
	\end{enumerate}
	In other words: All three metrics induce the same notion of convergence.	
\end{cor}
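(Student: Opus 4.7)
The plan is to read off the three equivalences directly from the sandwich inequalities supplied by Proposition \ref{prop:ineq}; no new machinery is needed. Write $a_n := D_1(A_n, A)$, $b_n := D_\infty(A_n, A)$, and $c_n := D_p(A_n, A)$, all of which are nonnegative real numbers, and work entirely at the level of these numerical sequences.

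For the equivalence (1) $\Leftrightarrow$ (2), I would apply item 2 of Proposition \ref{prop:ineq}, namely $a_n \le b_n \le 2\sqrt{a_n}$. The left half gives that $b_n \to 0$ forces $a_n \to 0$ by monotonicity, and the right half gives that $a_n \to 0$ forces $b_n \to 0$ by continuity of $t \mapsto 2\sqrt{t}$ at $0$. So convergence of one sequence to zero is equivalent to convergence of the other.

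For the equivalence (1) $\Leftrightarrow$ (3), I would use item 3 of Proposition \ref{prop:ineq}, namely $c_n^p \le a_n \le c_n$. The inequality $a_n \le c_n$ shows that $c_n \to 0$ implies $a_n \to 0$, and the inequality $c_n^p \le a_n$ together with continuity of $t \mapsto t^{1/p}$ at $0$ shows that $a_n \to 0$ implies $c_n \to 0$. Chaining these two equivalences through (1) yields the three-way equivalence.

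There is essentially no obstacle: the corollary is purely a bookkeeping consequence of the already established inequalities, and the only point that deserves a short sentence in the writeup is that $\sqrt{\cdot}$ and $(\cdot)^{1/p}$ are continuous at the origin so the implications in the ``harder'' direction really do go through. No case analysis on $p$ is needed beyond $p \in (1, \infty)$, and no properties specific to the linkage construction or to copulas enter the argument at this stage.
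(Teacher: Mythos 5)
Your argument is exactly the one the paper intends: the corollary is stated as a direct consequence of Proposition \ref{prop:ineq}, and your reading of the two sandwich inequalities $D_1\le D_\infty\le 2\sqrt{D_1}$ and $D_p^p\le D_1\le D_p$ is the whole content. Correct and complete.
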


\begin{remk}
	{The idea of constructing metrics via Markov operators corresponding to copulas as
		introduced in \cite{trutschnig2011} carries over to the constructions studied here: In fact, 
		defining a linear operator $T_C: L^1(\mathbb{I}, \mathcal{B}(\mathbb{I}), \lambda) \to  L^1(\mathbb{I}^d, \mathcal{B}(\mathbb{I}^d), \lambda^d)$ by setting 
		\begin{align*}
			(T_Cf)(\boldsymbol{x}):=\int_{\mathbb{I}^d} f(y) K_C(\boldsymbol{x}, dy)
		\end{align*}
		for every $f \in L^1(\mathbb{I}, \mathcal{B}(\mathbb{I}), \lambda)$ and every $C \in \mathcal{C}_{\Pi_d}^\rho$
		it is straightforward to verify that $T_C$ fulfills the following three conditions:
		\begin{enumerate}
			\item $T_C1 = 1$.
			\item For every $f \in L^1(\mathbb{I}, \mathcal{B}(\mathbb{I}), \lambda)$, $f \geq 0$ implies $T_Cf \geq 0$. 
			\item $\int_{\mathbb{I}^d} (T_Cf)(\boldsymbol{x}) d\lambda^d(\boldsymbol{x}) = \int_{\mathbb{I}} f(y) d\lambda(y)$ for every $f \in L^1(\mathbb{I}, \mathcal{B}(\mathbb{I}), \lambda)$.
		\end{enumerate}
		Hence, $T_C$ is a Markov operator in the sense of \cite{mikusinski_2009}[Definition 1]). 
		Moreover, following the proofs of Lemma 2 and Theorem 6 in \cite{trutschnig2011} it can easily be verified that for every sequence $(C_n)_{n \in \mathbb{N}} \in \mathcal{C}_{\Pi_d}^\rho$ and $C \in \mathcal{C}_{\Pi_d}^\rho$ 
		convergence of the corresponding Markov operators in the strong operator topology on $L^1(\mathbb{I}^d, \mathcal{B}(\mathbb{I}^d), \lambda^d)$ is equivalent to convergence w.r.t. $D_1$.}
\end{remk}

It is well known that the family of shuffles of $M$ as well as the family of checkerboard copulas are a dense subset of $\mathcal{C}^\rho$ with respect to the uniform distance $d_\infty$ (see, e.g., \cite{mikusinski2010}). 
According to \cite{trutschnig2011} in the bivariate setting shuffles of $M$ are not dense in 
$(\mathcal{C}^2,D_1)$ since shuffles of $M$ are special cases of complete dependence, checkerboard copulas, however, are
dense. We now show that the just-mentioned denseness also holds in $(\mathcal{C}_{\Pi_d}^\rho,D_1)$ 
and start by recalling the definition of checkerboard approximations. Let $N \in \mathbb{N}$ be arbitrary but fixed. 
Doing so, for every $N \in \mathbb{N}$ and every vector 
$\boldsymbol{i}=(i_1,\ldots, i_d) \in \mathcal{I}:= \{1,\ldots, N\}^d$ the $d$-dimensional 
hypercube $R_N^{\boldsymbol{i}}$ is defined by 
\begin{align}\label{hypercubes}
	R_N^{\boldsymbol{i}} = \left[\frac{i_1-1}{N}, \frac{i_1}{N}\right] \times \cdots \times \left[\frac{i_d-1}{N}, \frac{i_d}{N}\right].
\end{align}

\begin{defi}
	A copula $A \in \mathcal{C}^\rho$ is called checkerboard copula with resolution $N \in \mathbb{N}$ if and only if $\mu_A$ distributes its mass uniformly on each $\rho$-dimensional hypercube $R_N^{\boldsymbol{i}}$ with $\boldsymbol{i} \in \mathcal{I}$. We will refer to $\mathcal{CB}_N$ as the family of all checkerboard copulas with resolution $N$, the set $\mathcal{CB}:=\bigcup_{N \in \mathbb{N}} \mathcal{CB}_N$ denotes the family of all checkerboard copulas.   	
\end{defi}

\begin{defi}
	For a copula $A \in \mathcal{C}^\rho$ and $N \in \mathbb{N}$ the (absolutely continuous) copula $\mathfrak{CB}_N(A) \in \mathcal{CB}_N$, defined by 
	{\small\begin{align}\label{def::CB}
			\mathfrak{CB}_N(A)(\boldsymbol{x},y) := 
			N^\rho \int_{[0,(\boldsymbol{x},y)]} \sum_{\boldsymbol{i} \in \mathcal{I}, j\in \mathcal{J}} \mu_A\left(R_N^{\boldsymbol{i}} \times \left[\tfrac{j-1}{N}, \tfrac{j}{N}\right]\right) \ind_{R_N^{\boldsymbol{i}} \times \left[\frac{j-1}{N}, \frac{j}{N}\right]}(\boldsymbol{s},t)  d\lambda^\rho(\boldsymbol{s},t) ,
	\end{align}} 
	is called $N$-checkerboard approximation of $A$ (or simply $N$-checkerboard of $A$), whereby $\mathcal{I} := \{1,\ldots, N\}^d$ and $\mathcal{J} :=\{1,\ldots, N\}$. 
\end{defi}

\begin{exam}\label{exam:check}
	Suppose that $X_1 \sim \mathcal{U}(0,1)$, set $X_2 = 2X_1 (mod \ 1)$ as well as $Y = X_1$. Furthermore, let 
	$(\boldsymbol{X}_1,Y_1), \ldots, (\boldsymbol{X}_n,Y_n$ be a sample of size $n = 30$ drawn from 
	$(\boldsymbol{X},Y)=(X_1,X_2,Y)$. Figure \ref{fig:exa_CB} depicts the empirical {multilinear} copula $\hat{A}_n$ 
	together with the pseudo-observations of the sample and the $7$-checkerboard approximation 
	$\mathfrak{CB}_7(\hat{A}_n)$. 
	\begin{figure}[!ht]
		\centering
		\includegraphics[width=4.5cm, page = 1, trim=60 50 70 70, clip]{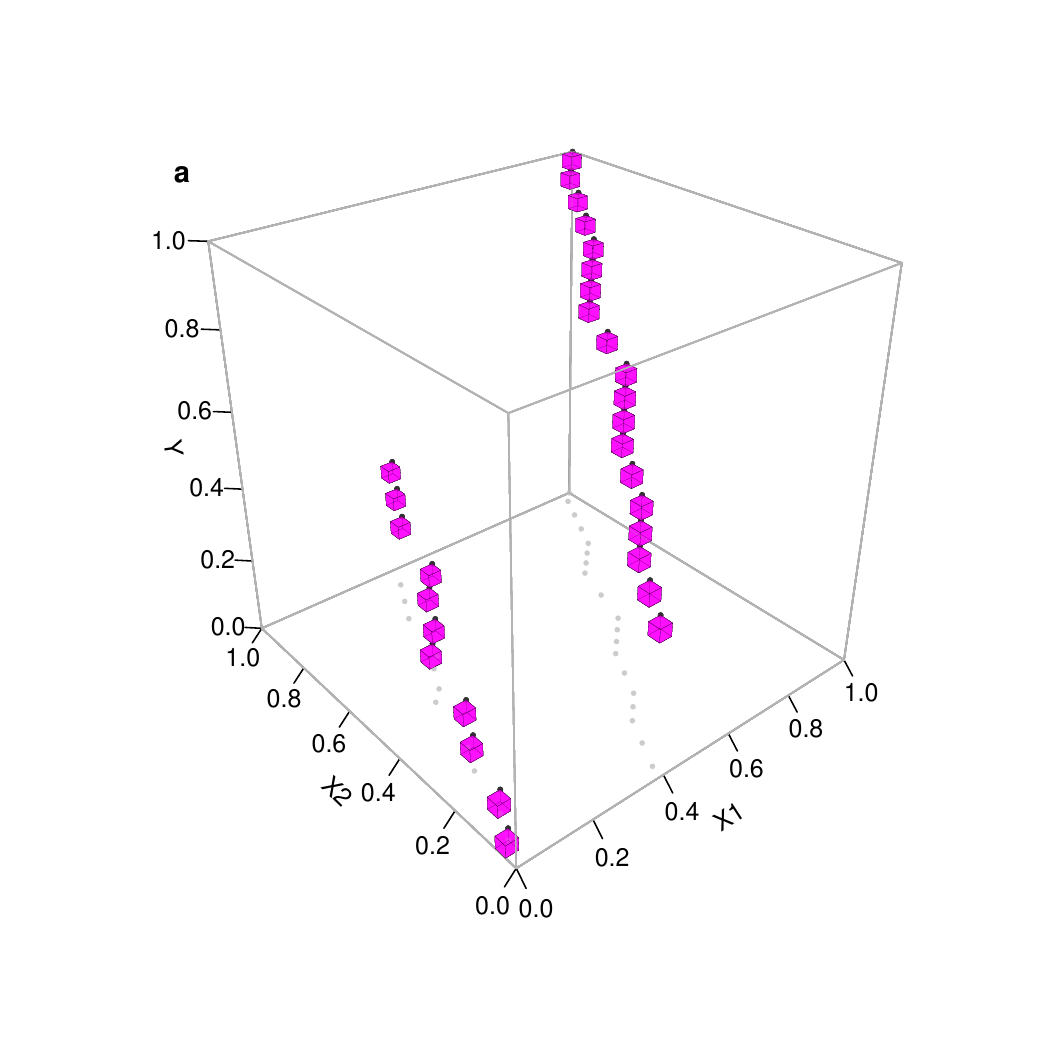} \hspace*{2cm}
		\includegraphics[width=5.3cm, page = 2, trim=65 40 10 70, clip]{exa_CB1.pdf}
		\caption{Empirical copula and pseudo-observations of the sample $(\boldsymbol{X},Y)_1, \ldots, (\boldsymbol{X},Y)_n$ of size $n = 30$ drawn from $(\boldsymbol{X}, Y) = (X_1,X_2,Y)$ according to Example \ref{exam:check} (left panel) and density of the $7$-checkerboard approximation $\mathfrak{CB}_7(\hat{A}_n)$ of the empirical {multilinear} copula $\hat{A}_n$ (right panel).}
		\label{fig:exa_CB}
	\end{figure}
\end{exam}

In the following we {partially} extend Definition 3.1 from \cite{kasper2020} and say that a sequence of copulas $(A_n)_{n \in \mathbb{N}} \in \mathcal{C}^\rho$ converges $k$-weakly conditional to $C \in \mathcal{C}^\rho$ if and only if for $\mu_{A^k}$-a.e. $\boldsymbol{x} \in \mathbb{I}^k$ we have that the sequence $(K_{A_n}(\boldsymbol{x}, \cdot))_{n \in \mathbb{N}}$ of probability measures on $\mathcal{B}(\mathbb{I}^{\rho-k})$ converges weakly to the probability measure $K_A(\boldsymbol{x}, \cdot)$. In the latter case we will write $A_n \xrightarrow{\textquoteleft \text{k-wcc'}} A$. 
{Considering, however, that bivariate and higher dimensional marginal measures of two $\rho$-dimensional 
	copulas may be singular with respect to each other, this definition of $k$-weak conditional convergence 
	only makes sense in subclasses of copulas having, e.g., the same marginals 
	(as it is the case for $\mathcal{C}^\rho_{\Pi_{d}}$), a fact illustrated by the following example:    
	\begin{exam}
		Let $(A_n)_{n \in \mathbb{N}}$ be a sequence of $\rho$-dimensional copulas such that $A_n = \Pi_\rho$ for every $n \in \mathbb{N}$ and $\rho \geq 3$. Then the Markov-kernel $K_{A_n}(\boldsymbol{x}, \cdot)$ is only unique for $\lambda^d$-a.e. $\boldsymbol{x}  \in \mathbb{I}^d$ and we could, e.g., define $K_{A_n}(\boldsymbol{x}, \cdot)$ for every $x \in [0,1]$ by 
		$K_{A_n}((x,x,\ldots, x), [0,y]):= \ind_{[0,y]}(x)$ for every $x \in \mathbb{I}$. 
		Considering the upper Fr\'echet Hoeffding bound $M_\rho \in \mathcal{C}^\rho$ with Markov kernel 
		$K_{M_\rho}((x,x,\ldots, x), [0,y]) = \ind_{[0,y]}(x)$ for $\lambda$-a.e. $x \in \mathbb{I}$, we would have 
		that $A_n$ converges $d$-weakly conditional to $M_\rho$, which is absurd.     
	\end{exam}
	Within the class of linkages, however, weak conditional convergence does make sense and, additionally, fulfills 
	the following natural properties:
	\begin{prop}\label{prop:k_wcc}
		Let $(C_n)_{n \in \mathbb{N}} \in \mathcal{C}^\rho_{\Pi_{d}}$ be a sequence of linkages such that $C_n$ 
		converges \textquoteleft$d$-wcc', to $C$, i.e. for $\lambda^d$-a.e. $\boldsymbol{x} \in \mathbb{I}^d$ we have  
		$$K_{C_n}(\boldsymbol{x}, \cdot) \xrightarrow{weakly} K_C(\boldsymbol{x}, \cdot)$$
		as $n \rightarrow \infty$. Then the following assertions hold:
		\begin{enumerate}
			\item $\lim_{n \to \infty} D_1(C_n,C) = 0$.
			\item $\lim_{n \to \infty} d_\infty(C_n,C) = 0$.
			\item For every $k \in \{1,\ldots, d\}$ we have $C_n \xrightarrow{\textquoteleft \text{k-wcc'}} C$ as $n \to \infty$.
		\end{enumerate}
	\end{prop}
	\begin{proof}
		Let $y \in [0,1]$ be arbitrary but fixed. Then considering  
		\begin{align*}
			0 = \mu_C(\mathbb{I}^d \times \{y\}) = \int_{\mathbb{I}^d} K_C(\boldsymbol{x}, \{y\}) d\lambda^d(\boldsymbol{x})
		\end{align*}
		we can find a set $\Lambda_1 \in \mathcal{B}(\mathbb{I}^d)$ {with $\lambda^d(\Lambda_1) = 1$} such that $K_C(\boldsymbol{x}, \{y\}) = 0$ holds for every $\boldsymbol{x} \in \Lambda_1$. Letting $\Lambda$ denote the set of all $\boldsymbol{x} \in \Lambda_1$ such that $K_{C_n}(\boldsymbol{x},\cdot)$ converges weakly to $K_C(\boldsymbol{x},\cdot)$ for every $\boldsymbol{x} \in \Lambda$ we get
		\begin{align*}
			\lim_{n \to \infty} K_{C_n}(\boldsymbol{x},[0,y]) = K_C(\boldsymbol{x},[0,y]),
		\end{align*}
		and applying Lebesgue's theorem on dominated convergence completes the proof of the first assertion. 
		Assertion (2) is a direct consequence of Proposition \ref{prop:ineq} (1) and (2). \\
		To prove assertion (3) we can proceed as follows:
		Set $m:= d-k$ and fix $q \in \mathbb{I} \cap \mathbb{Q}$. Then there exists a set $\Lambda_q \in \mathcal{B}(\mathbb{I}^d)$ with $\lambda^d(\Lambda_q)=1$ such that $K_C(\boldsymbol{x}, \{q\}) = 0$ as well as $\lim_{n\to \infty} K_{C_n}(\boldsymbol{x}, [0,q])=K_C(\boldsymbol{x}, [0,q])$ for every $\boldsymbol{x} \in \Lambda_{q}$. 
		Setting $\Lambda:=\bigcap_{q \in \mathbb{Q} \cap \mathbb{I}} \Lambda_q$ obviously  
		$\lambda^d(\Lambda)=1$. 
		Considering the $\boldsymbol{\tilde{x}}$-section $\Lambda_{\boldsymbol{\tilde{x}}}$ 
		of $\Lambda$, defined by
		$$\Lambda_{\boldsymbol{\tilde{x}}}:=\{\boldsymbol{s} \in \mathbb{I}^m: (\boldsymbol{\tilde{x}}, \boldsymbol{s}) \in \Lambda\}$$
		for every $\boldsymbol{\tilde{x}}:=(x_1,\ldots, x_k)$, and defining 
		$\Gamma:=\{\boldsymbol{\tilde{x}} \in \mathbb{I}^k: \lambda^m(\Lambda_{\boldsymbol{\tilde{x}}}) = 1\}$ 
		yields $\Gamma \in \mathcal{B}(\mathbb{I}^k)$ and $\lambda^k(\Gamma) = 1$. 
		For every $\boldsymbol{\tilde{x}} \in \Gamma, (y_1,\ldots, y_m) \in \Lambda_{\boldsymbol{\tilde{x}}}$ and $y_{m+1} \in \mathbb{Q} \cap \mathbb{I}$ we therefore get
		\begin{align*}
			\lim_{n \to \infty}  K_{C_n}\left(\boldsymbol{\tilde{x}}, \bigtimes_{i = 1}^{m+1} [0,y_i]\right) &= \lim_{n \to \infty} \int_{\bigtimes_{i = 1}^m [0,y_i]} K_{C_n}((\boldsymbol{\tilde{x}}, \boldsymbol{s}), [0,y_{m+1}]) d\lambda^m(\boldsymbol{s})\\
			&=  \lim_{n \to \infty} \int_{\bigtimes_{i = 1}^m [0,y_i]\cap \Lambda_{\boldsymbol{\tilde{x}}}} K_{C_n}((\boldsymbol{\tilde{x}}, \boldsymbol{s}), [0,y_{m+1}]) d\lambda^m(\boldsymbol{s})\\
			&= \int_{\bigtimes_{i = 1}^m [0,y_i] \cap \Lambda_{\boldsymbol{\tilde{x}}}} \lim_{n \to \infty}  K_{C_n}((\boldsymbol{\tilde{x}}, \boldsymbol{s}), [0,y_{m+1}]) d\lambda^m(\boldsymbol{s})\\
			&= \int_{\bigtimes_{i = 1}^m [0,y_i]} K_{C}((\boldsymbol{\tilde{x}}, \boldsymbol{s}), [0,y_{m+1}]) d\lambda^m(\boldsymbol{s})\\
			&= K_{C}\left(\boldsymbol{\tilde{x}}, \bigtimes_{i = 1}^{m+1} [0,y_i]\right).
		\end{align*} 
		Thereby we used the fact that, without loss of generality, we may assume that the identity 
		$$
		K_{A}\left(\boldsymbol{\tilde{x}}, \bigtimes_{i = 1}^{m+1} [0,y_i]\right) = 
		\int_{\bigtimes_{i = 1}^m [0,y_i]} K_{A}((\boldsymbol{\tilde{x}}, \boldsymbol{s}), [0,y_{m+1}]) d\lambda^m(\boldsymbol{s})
		$$
		holds for $A=C_n$ and $A=C$, all $\boldsymbol{\tilde{x}} \in \mathbb{I}^k$ and all $(y_1,\ldots,y_m) \in \mathbb{I}^m$. 
		Since weak convergence of $m+1$-dimensional distribution functions $H_1, H_2, \ldots $ to $H$ is well known 
		to be equivalent to pointwise convergence on a dense subset, we have shown that $\lambda^k$-almost all conditional distribution functions 
		$$(y_1, \ldots, y_{m+1}) \mapsto K_{C_n}\left(\boldsymbol{\tilde{x}}, \bigtimes_{i = 1}^{m+1} [0,y_i]\right)
		$$
		converge weakly to $(y_1, \ldots, y_{m+1}) \mapsto K_{C}\left(\boldsymbol{\tilde{x}}, \bigtimes_{i = 1}^{m+1} [0,y_i]\right)$, which completes the proof.
\end{proof}}

The following theorem shows that the checkerboard aggregation $\mathfrak{CB}_N(C)$ of a linkage $C$ converges to 
$C$ even w.r.t. \textquoteleft $d$-wcc', implying convergence w.r.t. $D_1$.   

\begin{thm}\label{thm:CBdense}
	For every $C \in \mathcal{C}_{\Pi_d}^\rho$ the checkerboard approximation $\mathfrak{CB}_N(C)$ of $C$ converges 
	$d$-weakly conditional to $C$, i.e., for $\lambda^d$-a.e. $\boldsymbol{x} \in \mathbb{I}^d$ we have  
	$$K_{\mathfrak{CB}_N(C)}(\boldsymbol{x}, \cdot) \xrightarrow{weakly} K_C(\boldsymbol{x}, \cdot)$$
	as $N \rightarrow \infty$. 
\end{thm}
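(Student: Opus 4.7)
The plan is to reduce the claim to a Lebesgue differentiation argument applied pointwise in the second coordinate, and then to promote pointwise convergence of conditional distribution functions to weak convergence via a standard sandwich/Portmanteau step.

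First I would write down an explicit formula for the Markov kernel of $\mathfrak{CB}_N(C)$. Since $C \in \mathcal{C}_{\Pi_d}^\rho$ implies $\mu_{C^d} = \lambda^d\rvert_{\mathbb{I}^d}$, it is immediate from definition \eqref{def::CB} that $\mathfrak{CB}_N(C)^d = \Pi_d$, so $\mathfrak{CB}_N(C)$ is itself a linkage and its Markov kernel is uniquely determined up to a $\lambda^d$-null set. Given any $\boldsymbol{x}$ in the interior of a hypercube $R_N^{\boldsymbol{i}}$, the definition of $\mathfrak{CB}_N(C)$ distributes the mass $\mu_C(R_N^{\boldsymbol{i}} \times [\tfrac{j-1}{N},\tfrac{j}{N}])$ uniformly on $R_N^{\boldsymbol{i}} \times [\tfrac{j-1}{N},\tfrac{j}{N}]$. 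Using disintegration (which simplifies because the $d$-marginal is $\lambda^d$), this yields at the grid points $y = k/N$ the clean identity
\begin{equation*}
K_{\mathfrak{CB}_N(C)}\bigl(\boldsymbol{x},[0,k/N]\bigr) \;=\; \frac{1}{\lambda^d(R_N^{\boldsymbol{i}})} \int_{R_N^{\boldsymbol{i}}} K_C\bigl(\boldsymbol{s},[0,k/N]\bigr)\,d\lambda^d(\boldsymbol{s}),
\end{equation*}
while for $y \in [(k-1)/N,k/N]$ the value $K_{\mathfrak{CB}_N(C)}(\boldsymbol{x},[0,y])$ equals the linear interpolation of the values at the two neighbouring grid points. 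This is the core computational lemma.

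Second, for every rational $q \in [0,1]$ the function $\boldsymbol{s} \mapsto K_C(\boldsymbol{s},[0,q])$ is bounded and measurable on $\mathbb{I}^d$, so Lebesgue's differentiation theorem furnishes a $\lambda^d$-null set $N_q$ such that for all $\boldsymbol{x} \notin N_q$ the averages above (with $q$ in place of $k/N$) converge to $K_C(\boldsymbol{x},[0,q])$ as $N \to \infty$; this uses that the partition cubes $R_N^{\boldsymbol{i}(\boldsymbol{x},N)}$ shrink nicely to $\boldsymbol{x}$, with volume of order $N^{-d}$ and diameter of order $N^{-1}$. Setting $N^\ast := \bigcup_{q \in \mathbb{Q}\cap[0,1]} N_q$, which remains $\lambda^d$-null, and excluding the countable union of cube faces, we obtain a single null set outside of which the average converges to $K_C(\boldsymbol{x},[0,q])$ simultaneously for every rational $q$.

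Third, I would upgrade this pointwise statement to weak convergence. Fix $\boldsymbol{x} \notin N^\ast$ and let $y_0 \in [0,1]$ be a continuity point of $y \mapsto K_C(\boldsymbol{x},[0,y])$ (so not an atom of $K_C(\boldsymbol{x},\cdot)$); since only countably many points are atoms, such $y_0$ are dense in $[0,1]$. Given $\varepsilon > 0$, pick rationals $q_1 < y_0 < q_2$ with $K_C(\boldsymbol{x},[0,q_2]) - K_C(\boldsymbol{x},[0,q_1]) < \varepsilon$. For $N$ large the neighbouring grid points $(k-1)/N,\, k/N$ of $y_0$ both lie in $(q_1,q_2)$, and monotonicity of the CDF together with the interpolation formula give
\begin{equation*}
\mathrm{avg}_{R_N^{\boldsymbol{i}(\boldsymbol{x},N)}} K_C(\cdot,[0,q_1]) \;\le\; K_{\mathfrak{CB}_N(C)}(\boldsymbol{x},[0,y_0]) \;\le\; \mathrm{avg}_{R_N^{\boldsymbol{i}(\boldsymbol{x},N)}} K_C(\cdot,[0,q_2]).
\end{equation*}
Passing to the limit using Lebesgue differentiation at $q_1$ and $q_2$ yields $|K_{\mathfrak{CB}_N(C)}(\boldsymbol{x},[0,y_0]) - K_C(\boldsymbol{x},[0,y_0])| \le 2\varepsilon$ eventually. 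Since $\varepsilon$ is arbitrary and $y_0$ was an arbitrary continuity point, the Portmanteau theorem yields $K_{\mathfrak{CB}_N(C)}(\boldsymbol{x},\cdot) \xrightarrow{w} K_C(\boldsymbol{x},\cdot)$.

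The main obstacle is the mismatch between the discretisation scale $1/N$ (which controls the grid points) and an arbitrary target value $y_0$: exact Lebesgue differentiation is only visible at grid points, so the sandwich argument with nearby rationals (together with the continuity of the limit CDF at $y_0$) is essential to bridge the gap. Everything else is bookkeeping: verifying that $\mathfrak{CB}_N(C)$ is a linkage, unpacking the checkerboard definition into the averaging formula, and aggregating countably many null sets.
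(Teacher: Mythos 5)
Your proof is correct and follows essentially the same strategy as the paper's: both reduce the claim to Lebesgue's differentiation theorem applied to $\boldsymbol{s}\mapsto K_C(\boldsymbol{s},[0,q])$ for countably many $q$ along the hypercubes shrinking nicely to $\boldsymbol{x}$, and then bridge the mismatch between the grid scale $1/N$ and a general $y$ by a monotone sandwich. The only (minor) difference is how the sandwich is closed: the paper first proves that the mass $K_{\mathfrak{CB}_N(C)}(\boldsymbol{x},J_N(y))$ of the grid cell containing a fixed rational $y$ tends to zero (using that $K_C(\boldsymbol{x},\cdot)$ has, for a.e.\ $\boldsymbol{x}$, no atom at $y$) and deduces pointwise convergence at all rationals, whereas you bracket an arbitrary continuity point $y_0$ of the limit between two rationals and let continuity of the limit distribution function absorb the error, which lets you skip the atomlessness step entirely.
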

\begin{proof}
	The proof is quite technical and hence deferred to the Appendix \ref{app:proofs}.
\end{proof}

\begin{cor}\label{cor:D1_conv}
	For every $C \in \mathcal{C}_{\Pi_d}^\rho$ the checkerboard approximation $\mathfrak{CB}_N(C)$ of $C$ converges to $C$ with respect to the metric $D_1$, i.e., we have 
	$$\lim_{N \to \infty} D_1(\mathfrak{CB}_N(C),C) = 0.$$
\end{cor}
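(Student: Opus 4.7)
The plan is to directly reduce the corollary to the $d$-weakly conditional convergence established in Theorem \ref{thm:CBdense} via two applications of dominated convergence. First I would verify that $\mathfrak{CB}_N(C)$ itself lies in $\mathcal{C}_{\Pi_d}^\rho$ whenever $C$ does: indeed, the $d$-dimensional margin of $\mathfrak{CB}_N(C)$ equals the $N$-checkerboard aggregation of $C^d = \Pi_d$, and since $\Pi_d$ already distributes mass uniformly on every $R_N^{\boldsymbol{i}}$, the aggregation returns $\Pi_d$. Consequently $L(\mathfrak{CB}_N(C)) = \mathfrak{CB}_N(C)$ and $L(C) = C$, so the integrand in $D_1(\mathfrak{CB}_N(C), C)$ simplifies to
\[
\bigl| K_{\mathfrak{CB}_N(C)}(\boldsymbol{x},[0,y]) - K_C(\boldsymbol{x},[0,y]) \bigr|.
\]

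Next, I would invoke Theorem \ref{thm:CBdense} to obtain, for $\lambda^d$-a.e. $\boldsymbol{x} \in \mathbb{I}^d$, the weak convergence $K_{\mathfrak{CB}_N(C)}(\boldsymbol{x},\cdot) \xrightarrow{\text{weakly}} K_C(\boldsymbol{x},\cdot)$ as $N \to \infty$. Weak convergence of probability measures on $\mathbb{I}$ implies pointwise convergence of the associated cumulative distribution functions at every continuity point of the limit CDF. Since $y \mapsto K_C(\boldsymbol{x},[0,y])$ has at most countably many discontinuities, we obtain for $\lambda^d$-a.e. $\boldsymbol{x}$ and $\lambda$-a.e. $y$ that
\[
\bigl| K_{\mathfrak{CB}_N(C)}(\boldsymbol{x},[0,y]) - K_C(\boldsymbol{x},[0,y]) \bigr| \;\longrightarrow\; 0.
\]

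The final step is two successive applications of dominated convergence. The integrand is uniformly bounded by $1$, which is integrable against $\lambda^d \otimes \lambda$ on $\mathbb{I}^\rho$, hence for $\lambda^d$-a.e. $\boldsymbol{x}$ the inner integral $\phi_{\mathfrak{CB}_N(C),C}(\boldsymbol{x}) := \int_\mathbb{I} |K_{\mathfrak{CB}_N(C)}(\boldsymbol{x},[0,y]) - K_C(\boldsymbol{x},[0,y])|\, d\lambda(y)$ tends to $0$; applying dominated convergence once more in $\boldsymbol{x}$ yields $D_1(\mathfrak{CB}_N(C),C) \to 0$.

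The only genuinely non-trivial ingredient is Theorem \ref{thm:CBdense}, whose proof the authors defer to the supplement; everything beyond it is standard real-analysis bookkeeping, the main subtlety being the reminder that weak convergence transfers to $\lambda$-a.e. pointwise CDF convergence (rather than pointwise everywhere) because the limit Markov kernel need not be continuous in $y$. Once this is noted, the double application of the bounded convergence theorem closes the argument without any further estimates on the copula structure.
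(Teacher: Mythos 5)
Your proposal is correct and follows essentially the same route as the paper: reduce to the $d$-weakly-conditional convergence of Theorem \ref{thm:CBdense}, translate weak convergence of $K_{\mathfrak{CB}_N(C)}(\boldsymbol{x},\cdot)$ into a.e.\ pointwise convergence of the corresponding conditional CDFs, and finish with dominated convergence. The only cosmetic difference is the order of quantifiers — the paper fixes $y$ and notes that $\lambda^d$-a.e.\ $\boldsymbol{x}$ has no atom at $y$ (since $\mu_C(\mathbb{I}^d\times\{y\})=0$), whereas you fix $\boldsymbol{x}$ and discard the countably many discontinuity points in $y$ — but both deliver the same full-measure set on which the integrand vanishes in the limit.
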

{
	\begin{proof}
		The assertion directly follows from property (1) in Proposition \ref{prop:k_wcc}.
\end{proof}}

\noindent Since checkerboard copulas are absolutely continuous, Corollary \ref{cor:D1_conv} implies the following result. 

\begin{cor}
	The set of all absolutely continuous copulas is dense in $(\mathcal{C}_{\Pi_d}^\rho, D_1)$. 
\end{cor}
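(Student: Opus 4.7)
The plan is to reduce the statement directly to Corollary \ref{cor:D1_conv} by verifying that the checkerboard approximation of a linkage is itself an absolutely continuous linkage. Given an arbitrary $C \in \mathcal{C}_{\Pi_d}^\rho$, I would consider the sequence $\bigl(\mathfrak{CB}_N(C)\bigr)_{N\in\mathbb{N}}$ from definition \eqref{def::CB} and argue separately that (i) each $\mathfrak{CB}_N(C)$ is absolutely continuous, (ii) each $\mathfrak{CB}_N(C)$ lies in $\mathcal{C}_{\Pi_d}^\rho$, and (iii) $D_1\bigl(\mathfrak{CB}_N(C),C\bigr) \to 0$.

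Point (i) is immediate from \eqref{def::CB}: the approximation is defined as the integral over $[0,\boldsymbol{x}]\times[0,y]$ of a bounded, piecewise constant integrand, so $\mathfrak{CB}_N(C)$ admits a Radon--Nikodym density with respect to $\lambda^\rho$. For point (iii) one simply invokes Corollary \ref{cor:D1_conv}, which gives $D_1\bigl(\mathfrak{CB}_N(C),C\bigr)\to 0$ as $N\to\infty$.

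The only step that genuinely needs checking is (ii), i.e., that the $d$-dimensional marginal of $\mathfrak{CB}_N(C)$ equals $\Pi_d$. Since $C \in \mathcal{C}_{\Pi_d}^\rho$ by assumption, $C^d = \Pi_d$, so the marginal mass distribution satisfies $\mu_C(R_N^{\boldsymbol{i}} \times \mathbb{I}) = 1/N^d$ for every $\boldsymbol{i}\in\mathcal{I}$. Summing the definition \eqref{def::CB} over $j\in\mathcal{J}$ and setting $y=1$ then shows that the density of $\mathfrak{CB}_N(C)^d$ is constantly equal to $1$ on $\mathbb{I}^d$, hence $\mathfrak{CB}_N(C)^d=\Pi_d$ and $\mathfrak{CB}_N(C) \in \mathcal{C}_{\Pi_d}^\rho$. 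This is the only non-cosmetic point in the argument and, as indicated, it is a one-line computation once the marginal identity for $C$ is invoked. Combining the three observations, the absolutely continuous linkages form a dense subset of $(\mathcal{C}_{\Pi_d}^\rho,D_1)$, which is the claim.
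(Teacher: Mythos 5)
Your proposal is correct and follows essentially the same route as the paper, which simply observes that checkerboard copulas are absolutely continuous and invokes Corollary \ref{cor:D1_conv}. Your extra step (ii), verifying that $\mathfrak{CB}_N(C)^d=\Pi_d$ so that the approximation stays in $\mathcal{C}_{\Pi_d}^\rho$, is a point the paper dispatches as ``straightforward to verify'' at the start of the proof of Theorem \ref{thm:CBdense}, and your one-line computation of the marginal density is the right way to check it.
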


Further topological properties of the metric space $(\mathcal{C}_{\Pi_d}^\rho,D_1)$ analogous to the ones mentioned in \cite{trutschnig2011} also hold in arbitrary dimensions. Proving the following result can be done in 
exactly same manner as in \cite[Theorem 3]{sanchez2015}.

\begin{thm}
	The metric space $(\mathcal{C}_{\Pi_d}^\rho, D_1)$ is complete and separable. 
\end{thm}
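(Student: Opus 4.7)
By Corollary \ref{cor:D1_conv} the $N$-checkerboards $\mathfrak{CB}_N(C)$ are $D_1$-dense in $\mathcal{C}_{\Pi_d}^\rho$; moreover each $\mathfrak{CB}_N(C)$ itself lies in $\mathcal{C}_{\Pi_d}^\rho$, since $C^d = \Pi_d$ forces $\mu_{\mathfrak{CB}_N(C)^d}(R_N^{\boldsymbol{i}}) = 1/N^d$ for every $\boldsymbol{i} \in \{1,\ldots,N\}^d$, and the uniform distribution of mass inside each cell then makes the $d$-margin exactly $\Pi_d$. As countable dense subset I would take all checkerboards of some resolution $N \in \mathbb{N}$ whose cell-masses $p_{\boldsymbol{i},j}$ are rational and satisfy the two linear constraints $\sum_j p_{\boldsymbol{i},j} = 1/N^d$ and $\sum_{\boldsymbol{i}} p_{\boldsymbol{i},j} = 1/N$ (the latter expressing uniformity of the $\rho$-th margin). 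Rational points are dense inside the rational polytope cut out by these constraints, and two checkerboards of the same resolution have $D_1$-distance proportional to the $\ell^1$-distance of their mass vectors, so rational approximations converge in $D_1$ as well.

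\textbf{Completeness.} Given a $D_1$-Cauchy sequence $(A_n) \subset \mathcal{C}_{\Pi_d}^\rho$, I would use compactness of $(\mathcal{C}^\rho, d_\infty)$ to extract a uniformly convergent subsequence $A_{n_k} \to A \in \mathcal{C}^\rho$. Uniform convergence preserves the $d$-dimensional margin, hence $A \in \mathcal{C}_{\Pi_d}^\rho$ and $L(A) = A$. Writing $f_n(\boldsymbol{x}, y) := K_{A_n}(\boldsymbol{x}, [0,y])$, the Cauchy property means $(f_n)$ is Cauchy in $L^1(\lambda^{d+1})$ and hence converges in $L^1$ to some $f$; after passing to a further subsequence, one may in addition assume that for $\lambda$-a.e.\ $y \in \mathbb{I}$ the slice $f_{n_k}(\cdot, y)$ converges to $f(\cdot, y)$ in $L^1(\lambda^d)$.

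\textbf{Main obstacle.} The heart of the argument is identifying the \emph{a priori} anonymous $f$ with the concrete kernel $K_A(\boldsymbol{x}, [0,y])$ outside a $\lambda^{d+1}$-null set. Fixing a generic such $y$ and any $\boldsymbol{u} \in \mathbb{I}^d$, combining $L^1$-slice convergence with uniform convergence $A_{n_k} \to A$ yields
\[
\int_{[0,\boldsymbol{u}]} f(\boldsymbol{x}, y)\, d\lambda^d(\boldsymbol{x}) \;=\; \lim_{k \to \infty} \int_{[0,\boldsymbol{u}]} f_{n_k}(\boldsymbol{x}, y)\, d\lambda^d(\boldsymbol{x}) \;=\; \lim_{k \to \infty} A_{n_k}(\boldsymbol{u}, y) \;=\; A(\boldsymbol{u}, y),
\]
while disintegration gives $A(\boldsymbol{u}, y) = \int_{[0,\boldsymbol{u}]} K_A(\boldsymbol{x}, [0,y])\, d\lambda^d(\boldsymbol{x})$. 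Since these $[0,\boldsymbol{u}]$-integral identities hold for every $\boldsymbol{u} \in \mathbb{I}^d$, uniqueness of densities forces $f(\boldsymbol{x}, y) = K_A(\boldsymbol{x}, [0,y])$ for $\lambda^d$-a.e.\ $\boldsymbol{x}$, and Fubini promotes this to $\lambda^{d+1}$-a.e.\ equality. Consequently $D_1(A_{n_k}, A) = \|f_{n_k} - f\|_{L^1} \to 0$, and the standard Cauchy argument propagates convergence to the full sequence $(A_n)$. Routing the identification through these integrated identities -- rather than through pointwise differentiation, which would demand further regularity of the limit -- is where the subtlety lies.
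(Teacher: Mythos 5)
Your proof is correct and follows essentially the route the paper intends: the paper gives no argument of its own but defers to \cite[Theorem 3]{sanchez2015}, and your combination of rational-weight checkerboards (via Corollary \ref{cor:D1_conv}) for separability with the compactness-plus-$L^1$-limit identification of the kernel for completeness is exactly the standard argument used there. The only cosmetic remark is that you could avoid the compactness extraction altogether, since Proposition \ref{prop:ineq} gives $d_\infty(A_n,A_m)=d_\infty(L(A_n),L(A_m))\le 2\sqrt{D_1(A_n,A_m)}$ for linkages, so a $D_1$-Cauchy sequence is already $d_\infty$-Cauchy and the whole sequence converges uniformly.
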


\section{A non-parametric and multivariate dependence measure $\zeta^1$}\label{sec:depmeasure}

Main objective of this section is to study the multivariate measure of dependence $\zeta^1(\boldsymbol{X}, Y)$ 
defined according to \eqref{def::depmeasure} which quantifies the extent of dependence of a 
univariate random variable $Y$ on a $d$-dimensional random vector $\boldsymbol{X} = (X_1,\ldots, X_d)$. 
Ensuring uniqueness of the underlying copula we will from now only consider $\rho$-dimensional random vectors 
$(\boldsymbol{X},Y)$ with continuous distribution function, the general setting and consequences on the dependence 
measure $\zeta^1$ are briefly discussed in Section \ref{sec:outlook}. 
After introducing the measure $\zeta^1$ and providing an alternative handy expression we show that
$\zeta^1$ fulfills all five properties mentioned in the introduction and then derive a 
strongly consistent estimator $\hat{\zeta}_n^1$ of $\zeta^1$ which works in full generality (i.e., without any 
smoothness assumptions about the underlying copula $A$).  

For $(\boldsymbol{X}, Y)$ with copula $A \in \mathcal{C}^\rho$ the dependence measure $\zeta^1(\boldsymbol{X}, Y)$ is defined by
\begin{align}\label{def::depmeasure}
	\zeta^1(\boldsymbol{X},Y) := \zeta^1(A) = 3D_1(A,\Pi_\rho).
\end{align}
The normalization factor $3$ will become clear in Theorem \ref{thm:properties} and will, additionally, 
also be discussed in Section \ref{sec:outlook}. Obviously, $\zeta^1$ can be interpreted as $L_1$-distance 
of the conditional distribution function of $L(A)$ and independence copula $\Pi_\rho$. 

\begin{lem}\label{lem:zeta1_expression}
	Suppose that $(\boldsymbol{X}, Y)$ has continuous distribution function $H$ and copula $A$. Then 
	\begin{align*}
		\zeta^1(\boldsymbol{X}, Y) =  3\int_{\mathbb{I}} \int_{\mathbb{I}^d} \lvert K_{A}(\boldsymbol{x},[0,y]) - y \rvert d\mu_{A^d}(\boldsymbol{x}) d\lambda(y),
	\end{align*}
	whereby $A^d$ denotes the (marginal) copula of the random vector $\boldsymbol{X}=(X_1,\ldots, X_d)$. 
\end{lem}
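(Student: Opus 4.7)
The plan is to unfold the definition of $\zeta^1$ and $D_1$, specialize to $B=\Pi_\rho$, and then transport the Lebesgue integral on $\mathbb{I}^d$ to an integral against $\mu_{A^d}$ via the transformation $\Psi$ associated with $A^d$.

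First, I would note that $\Pi_\rho \in \mathcal{C}_{\Pi_d}^\rho$, so $L(\Pi_\rho)=\Pi_\rho$; moreover a version of the Markov kernel of $\Pi_\rho$ is given by $K_{\Pi_\rho}(\boldsymbol{x},[0,y]) = y$ for all $\boldsymbol{x} \in \mathbb{I}^d$ and $y \in \mathbb{I}$ (the last coordinate being independent of the first $d$ under the product copula). Plugging this into the definition of $D_1(A,\Pi_\rho)$ and invoking Lemma \ref{lem::MKLinkage}, which gives the version $K_{L(A)}(\boldsymbol{x},[0,y]) = K_A(\Psi(\boldsymbol{x}),[0,y])$, yields
$$\zeta^1(\boldsymbol{X},Y) \;=\; 3\int_{\mathbb{I}}\int_{\mathbb{I}^d}\bigl|K_A(\Psi(\boldsymbol{x}),[0,y]) - y\bigr|\, d\lambda^d(\boldsymbol{x})\, d\lambda(y).$$

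Next, for each fixed $y \in \mathbb{I}$ I would apply the change-of-variables (push-forward) formula to the inner integral. Since in this context $\Psi$ is built from the (conditional) distribution functions of the $d$-margin, Lemma \ref{lem::psiphi} applied with $H$ replaced by $A^d$ says that if $\boldsymbol{U} \sim \Pi_d$ then $\Psi(\boldsymbol{U}) \sim A^d$; equivalently, $\Psi_* \lambda^d = \mu_{A^d}$. Measurability of $\boldsymbol{x} \mapsto |K_A(\Psi(\boldsymbol{x}),[0,y]) - y|$ follows from measurability of $\Psi$ together with the kernel property of $K_A(\cdot,[0,y])$, so the push-forward identity gives
$$\int_{\mathbb{I}^d}\bigl|K_A(\Psi(\boldsymbol{x}),[0,y]) - y\bigr|\, d\lambda^d(\boldsymbol{x}) \;=\; \int_{\mathbb{I}^d}\bigl|K_A(\boldsymbol{z},[0,y]) - y\bigr|\, d\mu_{A^d}(\boldsymbol{z}).$$
Integrating over $y$ and using Tonelli (the integrand is non-negative and jointly measurable) completes the argument.

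The only subtlety, and what I would flag as the main obstacle, is ensuring that the change-of-variables step is applied to a \emph{jointly} measurable integrand and that the version of $K_{L(A)}$ obtained from Lemma \ref{lem::MKLinkage} is used consistently in $y$; both points are handled by the fact that Lemma \ref{lem::MKLinkage} constructs $K_{L(A)}$ globally as $K_A\circ\Psi$ rather than only for each $y$ separately, which sidesteps any almost-everywhere-exceptional-set issues.
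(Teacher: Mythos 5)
Your proposal is correct and follows essentially the same route as the paper's proof: both unfold $\zeta^1(A)=3D_1(A,\Pi_\rho)$ using $K_{\Pi_\rho}(\boldsymbol{x},[0,y])=y$, invoke Lemma \ref{lem::MKLinkage} to replace $K_{L(A)}(\boldsymbol{x},[0,y])$ by $K_A(\Psi(\boldsymbol{x}),[0,y])$, and then change variables via the push-forward identity $\Psi_*\lambda^d=\mu_{A^d}$ from the second assertion of Lemma \ref{lem::psiphi}. Your closing remark about using the globally defined version $K_A\circ\Psi$ consistently in $y$ is a fair observation, though the paper does not dwell on it.
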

\begin{proof} Suppose that $\boldsymbol{U} = (U_1,\ldots, U_d) \sim \Pi^d$. Since the Markov kernel of $\Pi_\rho$ is $K_{\Pi_\rho}(\boldsymbol{x}, [0,y]) = y$ applying Lemma \ref{lem::MKLinkage}, change of coordinates 
	and using the second assertion in Lemma \ref{lem::psiphi} yields
	\begin{align*}
		D_1(A, \Pi_\rho) &= \int_{\mathbb{I}} \int_{\mathbb{I}^d} \lvert K_{L(A)}(\boldsymbol{x},[0,y]) - y \rvert \, d\lambda^d(\boldsymbol{x})d\lambda(y) \\
		&=\int_{\mathbb{I}} \int_{\mathbb{I}^d} \lvert K_{A}(\Psi(\boldsymbol{x}),[0,y]) - y \rvert \, d\mathbb{P}^{\boldsymbol{U}}(\boldsymbol{x}) d\lambda(y) \\
		&=\int_{\mathbb{I}} \int_{\mathbb{I}^d} \lvert K_{A}(\boldsymbol{x},[0,y]) - y \rvert \, d\mathbb{P}^{\Psi(\boldsymbol{U})}(\boldsymbol{x}) d\lambda(y) \\
		&=\int_{\mathbb{I}} \int_{\mathbb{I}^d} \lvert K_{A}(\boldsymbol{x},[0,y]) - y \rvert \, \mu_{A^{d}}(\boldsymbol{x})d\lambda(y),
	\end{align*}
	which completes the proof.
\end{proof}

\begin{remk}
	The so-called \textquoteleft simple measure of conditional dependence $T(Y,\boldsymbol{X})$' as recently introduced in \cite{azadkia2019} and defined by 
	\begin{align}\label{zeta2}
		T(Y,\boldsymbol{X}) := \frac{\int \mathbb{E}(Var(\mathbb{P}(Y \geq t|\boldsymbol{X}))) d\mu(t)}{\int Var(\ind_{\{Y \geq t\}}) d\mu(t)},
	\end{align} 
	whereby $\mu$ denotes the law $\mathbb{P}^Y$ of $Y$, is easily shown to coincide with the squared $D_2$ distance between $A$ and $\Pi_\rho$ in our setting, whereby $A$ denotes the copula underlying 
	the random vector $(\boldsymbol{X},Y)$, i.e., we have $T(Y, \boldsymbol{X}) = 6 D_2^2(A,\Pi_\rho)$.
	In \cite{azadkia2019} a strongly consistent estimator $T_n$ for $T(Y, \boldsymbol{X})$ with very good small sample performance was derived. Since from our point of view an interpretation of negative values of 
	$T_n$ (which are possible) are not at all straightforward for applicants outside the statistical community
	our estimator for $\zeta^1$ (which is easily transferable to $D_2$) only attains values in $[0,1]$.    
\end{remk}

Before deriving the estimator we will focus on the main properties of $\zeta^1$, and start with recalling the definition of complete dependence of a random variable $Y$ (response) on a random vector $\boldsymbol{X}$ (input).

\begin{defi}[Complete dependence, \cite{lancaster1963}]
	Let $(\boldsymbol{X},Y)$ be a random vector defined on $(\Omega, \mathcal{A}, \mathbb{P})$. We say that $Y$ is completely dependent on $\boldsymbol{X}$ if there exists a measurable function $f$ such that we have 
	$\mathbb{P}(Y = f(\boldsymbol{X})) = 1.$ 
\end{defi}

The subsequent lemma provides some characterizations of complete dependence in setting considered so far:

\begin{lem}\label{lem:completeDependence}
	Suppose that $(\boldsymbol{X},Y) \sim H$ with $H$ continuous and let $F_1,\ldots, F_d$ and $G$ denote
	the corresponding univariate marginal distribution functions. Furthermore set $\boldsymbol{U} = (U_1,\ldots, U_d)$ with
	$U_1 := F_1(X_1), U_2:=F_2(X_2), \ldots, U_d:=F_d(X_d)$ and $V:=G(Y)$, and let $A$ denote the copula of 
	$(\boldsymbol{U}, V)$. Then the following statements are equivalent:
	\begin{enumerate}[parsep = 1ex]
		\item $(\boldsymbol{X},Y)$ is completely dependent.
		\item There exists a $\mu_{A^d}$-$\lambda$ preserving transformation $h:\mathbb{I}^d \to \mathbb{I}$ such that $V = h(\boldsymbol{U})$ a.s..
		\item There exists a $\mu_{A^d}$-$\lambda$ preserving transformation $h:\mathbb{I}^d \to \mathbb{I}$ such that the measure $\mu_A$ can be expressed as
		$$\mu_A(\boldsymbol{E} \times F) =  \mu_{A^d}(\boldsymbol{E} \cap h^{-1}(F))$$
		for every $\boldsymbol{E} \in \mathcal{B}(\mathbb{I}^d)$ and $F \in \mathcal{B}(\mathbb{I})$. In particular, we have 
		$$A(\boldsymbol{x}, y) = \mu_{A^{d}}([0,\boldsymbol{x}] \cap h^{-1}([0,y]))$$
		for all $\boldsymbol{x} \in \mathbb{I}^d$ and $y \in \mathbb{I}$.  
		\item There exists a $\mu_{A^d}$-$\lambda$ preserving transformation $h:\mathbb{I}^d \to \mathbb{I}$ such that 
		$$K(\boldsymbol{x}, F) := \ind_{F} \circ h(\boldsymbol{x}) = \delta_{h(\boldsymbol{x})}(F)$$
		is a regular conditional distribution of $A$.
		\item There exists a $\mu_{A^d}$-$\lambda$ preserving transformation $h:\mathbb{I}^d \to \mathbb{I}$ such that $\mu_{A}(\Gamma(h))=1$, whereby $\Gamma(h):=\{(\boldsymbol{x}, h(\boldsymbol{x})): \boldsymbol{x} \in \mathbb{I}^d \}$ denotes the graph of $h$. 
	\end{enumerate}
\end{lem}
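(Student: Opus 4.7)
The plan is to prove the cycle $1 \Rightarrow 2 \Rightarrow 3 \Rightarrow 4 \Rightarrow 5 \Rightarrow 1$, exploiting at each step the fact that continuity of $H$ forces the marginals $F_1,\ldots,F_d,G$ to be continuous and hence guarantees the almost sure identities $U_i=F_i(X_i)$, $V=G(Y)$ together with $F_i^-(U_i)=X_i$ and $G^-(V)=Y$. Throughout, the measurability of the various transformations and the uniqueness of the Markov kernel (up to $\mu_{A^d}$-null sets) will be invoked without further comment.

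For $1\Rightarrow 2$, starting from $Y=f(\boldsymbol{X})$ a.s.\ I would define $h(u_1,\ldots,u_d):=G\circ f(F_1^-(u_1),\ldots,F_d^-(u_d))$, observe that $V=h(\boldsymbol{U})$ holds $\mathbb{P}$-a.s.\ by composing the a.s.\ identities above, and check the measure-preserving property via $\lambda(F)=\mathbb{P}(V\in F)=\mathbb{P}(h(\boldsymbol{U})\in F)=\mu_{A^d}(h^{-1}(F))$ for every $F\in\mathcal{B}(\mathbb{I})$. For $2\Rightarrow 3$, I would simply compute
\begin{align*}
\mu_A(\boldsymbol{E}\times F)=\mathbb{P}(\boldsymbol{U}\in \boldsymbol{E},\,V\in F)=\mathbb{P}(\boldsymbol{U}\in \boldsymbol{E}\cap h^{-1}(F))=\mu_{A^d}(\boldsymbol{E}\cap h^{-1}(F)),
\end{align*}
and the formula for $A(\boldsymbol{x},y)$ is the special case $\boldsymbol{E}=[0,\boldsymbol{x}]$, $F=[0,y]$.

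For $3\Rightarrow 4$, I would verify that $K(\boldsymbol{x},F):=\ind_{h^{-1}(F)}(\boldsymbol{x})$ is a Markov kernel (measurability in $\boldsymbol{x}$ from measurability of $h$, probability measure in $F$ by inspection) and then use $3$ to write $\int_{\boldsymbol{E}}K(\boldsymbol{x},F)\,d\mu_{A^d}(\boldsymbol{x})=\mu_{A^d}(\boldsymbol{E}\cap h^{-1}(F))=\mu_A(\boldsymbol{E}\times F)$; by $\mu_{A^d}$-a.e.\ uniqueness of the Markov kernel this identifies $K$ as a regular conditional distribution of $A$. For $4\Rightarrow 5$, I would apply the disintegration identity (1) to $G=\Gamma(h)$ whose section at $\boldsymbol{x}$ equals $\{h(\boldsymbol{x})\}$, giving $\mu_A(\Gamma(h))=\int_{\mathbb{I}^d}\delta_{h(\boldsymbol{x})}(\{h(\boldsymbol{x})\})\,d\mu_{A^d}(\boldsymbol{x})=1$. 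Finally for $5\Rightarrow 1$, the condition $\mu_A(\Gamma(h))=1$ translates to $V=h(\boldsymbol{U})$ a.s., and then $Y=G^-(V)=G^-\bigl(h(F_1(X_1),\ldots,F_d(X_d))\bigr)=:f(\boldsymbol{X})$ a.s., establishing complete dependence.

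The only delicate point is the measurability/null-set bookkeeping: one must ensure that the transformation $h$ produced in $1\Rightarrow 2$ is genuinely $\mu_{A^d}$-$\lambda$ preserving (not merely measurable) and that the pseudo-inverses $F_i^-,G^-$ can be composed on full-probability sets, which is where the continuity hypothesis on $H$ is essential. Beyond that each implication is a direct application of disintegration, change of variables, or the almost-sure identities linking $(\boldsymbol{X},Y)$ and $(\boldsymbol{U},V)$.
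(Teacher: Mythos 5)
Your proposal is correct and follows essentially the same route as the paper: the same choice of $h=G\circ f\circ\boldsymbol{F}^-$ and $f=G^-\circ h\circ\boldsymbol{F}$, the same direct computation for $2\Rightarrow 3$, the same kernel verification for $3\Rightarrow 4$, and the same disintegration of $\Gamma(h)$ for $4\Rightarrow 5$. The only (harmless) difference is organizational: you close the cycle via $5\Rightarrow 1$ by reading $\mu_A(\Gamma(h))=1$ directly as $V=h(\boldsymbol{U})$ a.s., whereas the paper proves $(1)\Leftrightarrow(2)$, the loop $(2)\Rightarrow(3)\Rightarrow(4)\Rightarrow(2)$, and $(4)\Leftrightarrow(5)$ separately.
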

\begin{proof}
	(1) $\Leftrightarrow$ (2): Suppose that $(\boldsymbol{X},Y)$ is completely dependent. Defining
	$h:\mathbb{I}^d \to \mathbb{I}$ by 
	$$h(u_1,\ldots, u_d) := G \circ f (F_1^-(u_1), \ldots, F_d^-(u_d)),$$
	obviously $h$ is measurable. Moreover, for $\mathbb{P}$-almost every $\omega \in \Omega$ we have 
	\begin{align*}
		V = G(Y) = G \circ f(\boldsymbol{X}) = G \circ f \circ \boldsymbol{F}^-(\boldsymbol{U}) =h(\boldsymbol{U}),
	\end{align*}
	whereby $\boldsymbol{F}^-=(F_1^-,\ldots, F_d^-)$. Considering that for every $E \in \mathcal{B}([0,1])$ we have
	\begin{align*}
		\mu_{A^d}^h(E) &= \mu_{A^d}(h^{-1}(E)) = \mathbb{P}^{\boldsymbol{U}}(h^{-1}(E)) = \mathbb{P}^{h(U)}(E) = \mathbb{P}^V(E) = \lambda(E),
	\end{align*}
	shows that $h$ is $\mu_{A^{d}}-\lambda$ preserving. To show the reverse implication define the measurable function 
	$f:\mathbb{R}^d \to \mathbb{R}$ by $f(\boldsymbol{x}):=G^- \circ h \circ \boldsymbol{F} ( \boldsymbol{x})$, which yields
	\begin{align*}
		Y = G^-(V) = G^- \circ h(\boldsymbol{U}) = G^- \circ h \circ \boldsymbol{F}(\boldsymbol{X}) = f(\boldsymbol{X}) 
		\quad a.s..
	\end{align*}
	(2) $\Rightarrow$ (3): The implication follows from the fact that for every $\boldsymbol{E} \in \mathcal{B}(\mathbb{I}^d)$ and $F \in \mathcal{B}(\mathbb{I})$ we have
	\begin{align*}
		\mu_A(\boldsymbol{E} \times F) &= \mathbb{P}^{(\boldsymbol{U},V)}(\boldsymbol{E} \times F) = \mathbb{P}^{(\boldsymbol{U}, h(\boldsymbol{U}))}(\boldsymbol{E} \times F)\\
		&= \mathbb{P}^{\boldsymbol{U}}(\boldsymbol{E} \cap h^{-1}(F)) = \mu_{A^{d}}(\boldsymbol{E} \cap h^{-1}(F))).
	\end{align*}	
	(3) $\Rightarrow$ (4): It suffices to show that $K(\boldsymbol{x},F):=\ind_{F} \circ h(\boldsymbol{x}) = \delta_{h(\boldsymbol{x})}(F)$ is a Markov kernel of $A$. Since $h$ is $\mu_{A^{d}}$-$\lambda$ preserving, $K(\boldsymbol{x},F)$ is a Markov kernel. If $\boldsymbol{E} \in \mathcal{B}([0,1]^d)$ and $F \in \mathcal{B}([0,1])$ then
	\begin{align*}
		\mu_{A}(\boldsymbol{E} \times F) = \mu_{A^d}(\boldsymbol{E} \cap h^{-1}(F)) =\int_{\boldsymbol{E}} \ind_{F} \circ h(\boldsymbol{u}) \mu_{A^d}(\boldsymbol{u}) = \int_{\boldsymbol{E}} K(\boldsymbol{x}, F) \, d\mu_{A^{d}}(\boldsymbol{x}),
	\end{align*}  
	so $K(\boldsymbol{x}, F)$ is a regular conditional distribution of $A$. \\
	(4) $\Rightarrow$ (2):
	Using disintegration we have
	\begin{align*}
		\mathbb{P}(V=h(\boldsymbol{U})) &= \mu_{A}(\Gamma(h)) =\int_{\mathbb{I}^d} K_A(\boldsymbol{u},\{h(\boldsymbol{u})\}) d\mu_{A^{d}}(\boldsymbol{u}) \\
		&= \int_{\mathbb{I}^d} \ind_{\{h(\boldsymbol{u})\}}\left(h(\boldsymbol{u})\right)\, d\mu_{A^{d}}(\boldsymbol{u})=1.
	\end{align*}
	Altogether we have therefore shown that the conditions (2), (3) and (4) are equivalent and it remains to show that (4) and (5) are equivalent, which, using disintegration, can be done as follows: On the one hand,
	\begin{align*}
		\mu_{A}(\Gamma(h)) = \int_{\mathbb{I}^d}K_A(\boldsymbol{x}, (\Gamma(h))_{\boldsymbol{x}}) \, d\mu_{A^d}(\boldsymbol{x}) = \int_{\mathbb{I}^d} \ind_{\{h(\boldsymbol{x})\}}\circ h(\boldsymbol{x}) d\mu_{A^{d}}(\boldsymbol{x}) = 1,
	\end{align*}
	and on the other hand, $\mu_{A}(\Gamma(h)) = 1$ implies that $K_A(\boldsymbol{x}, \{h(\boldsymbol{x})\}) = 1$ for $\mu_{A^{d}}$-almost every $\boldsymbol{x} \in \mathbb{I}^d$, and the proof is complete.
\end{proof}

The following example shows that Lemma \ref{lem:completeDependence} no longer holds for discontinuous marginal 
distribution functions and working with multilinear interpolation to assure uniqueness of the copula. 

\begin{exam}\label{exa:discrete}
	Suppose that $X$ and $Y$ are discrete random variables with 
	\begin{align*}
		\mathbb{P}(X = i,Y = j) = \begin{cases}
			1/3 \quad & \text{if } i = 1, j = 1\\
			1/3 \quad & \text{if } i = 2, j = 2 \\
			1/3 \quad & \text{if } i = 3, j = 1 \\
			0 \quad & \text{otherwise}.
		\end{cases}
	\end{align*}
	\begin{figure}[!ht]
		\centering
		\includegraphics[width=10cm]{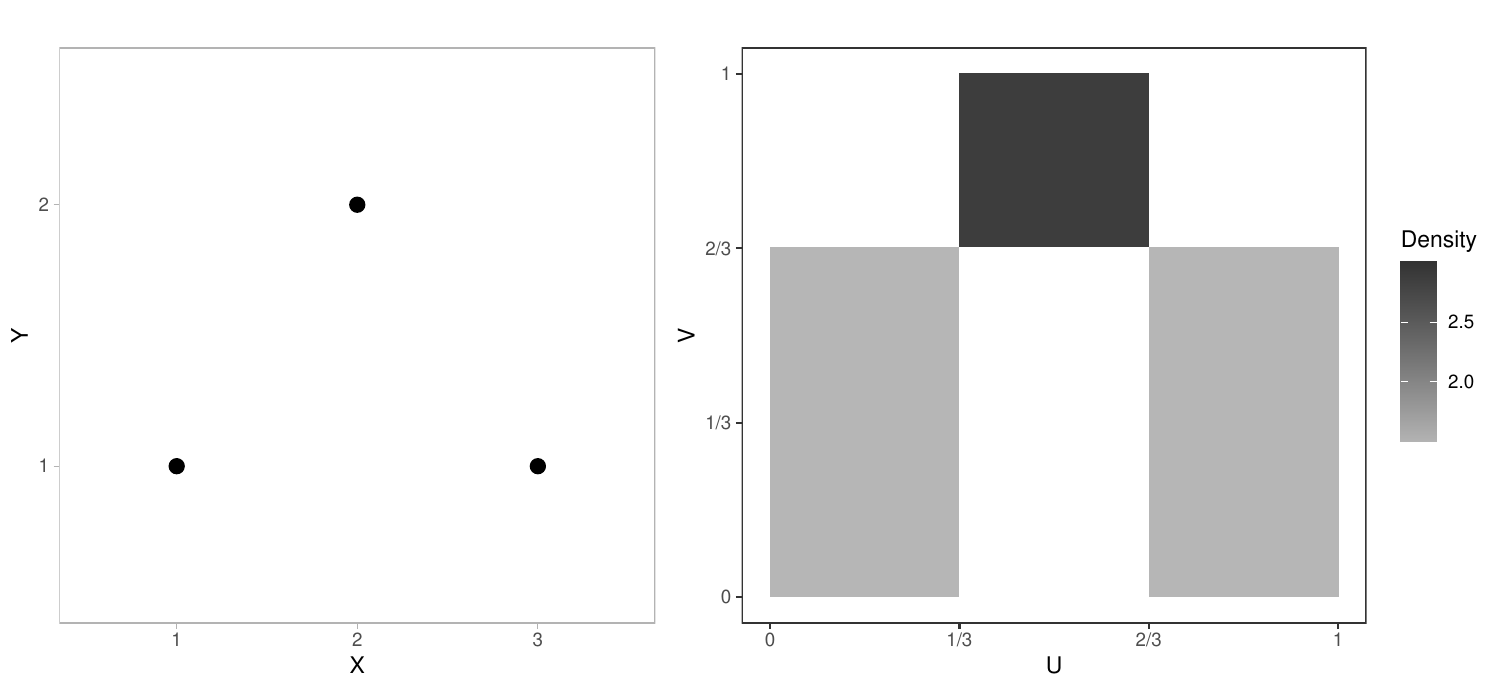}
		\caption{Support of $(X,Y)$ (left panel) and density of the corresponding unique copula $A$ (right panel) according to Example \ref{exa:discrete}.}
		\label{fig:discrete}
	\end{figure}
	
	Obviously, $Y$ is completely dependent on $X$ (but not vice versa). However, it is straightforward to verify that the unique copula $A$ underlying $(X,Y)$ in the afore-mentioned sense assigns mass uniformly to each of the rectangles $[0,1/3] \times [0,2/3]$, $[1/3,2/3] \times [2/3,1]$ and $[2/3,1] \times [0,2/3]$ and is hence far away from complete
	dependence according to Eq. (5) in Lemma \ref{lem:completeDependence}. Figure \ref{fig:discrete} depicts the support of $(X,Y)$ as well as the density of the absolutely continuous copula $A$.    
\end{exam}

We can now prove the main result of this section: 

\begin{thm}\label{thm:properties}
	Let $(\boldsymbol{X},Y)$ be a $\rho$-dimensional random vector with continuous distribution function $H$ and copula $A$. Then $\zeta^1$ fulfils the following properties:
	\begin{enumerate}[parsep = 1ex]
		\item $\zeta^1(\boldsymbol{X},Y) \in [0,1]$ (normalization).
		\item $\zeta^1(\boldsymbol{X},Y) = 0$ if and only if $Y$ and $\boldsymbol{X}$ are independent 
		(or equivalently, if and only if $L(A) = \Pi_\rho$)  (independence). 
		\item $\zeta^1(\boldsymbol{X},Y) = 1$ if and only if $Y$ is a function of $\boldsymbol{X}$ (complete dependence).
		\item If $\varphi_1,\ldots, \varphi_d,\varphi_\rho:\mathbb{R} \to \mathbb{R}$ are strictly increasing or decreasing transformations, then writing $\boldsymbol{\varphi}:=(\varphi_1, \ldots, \varphi_d)$ we have 
		$\zeta^1(\boldsymbol{\varphi}(\boldsymbol{X}),\varphi_\rho(Y)) = \zeta^1(\boldsymbol{X},Y)$ (scale-invariance).
		\item For $\boldsymbol{X} = (X_1, X_2, \ldots, X_d)$ the following chain of inequalities holds:
		\begin{align*}
			\zeta^1(X_1,Y) \leq \zeta^1((X_1,X_2), Y) \leq \ldots \leq  \zeta^1((X_1,\ldots, X_d),Y)
		\end{align*}
		(information gain).
	\end{enumerate}
\end{thm}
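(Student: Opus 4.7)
The plan is to use the identity
$$\zeta^1(\boldsymbol{X},Y) = 3\int_{\mathbb{I}} \int_{\mathbb{I}^d} \lvert K_A(\boldsymbol{x},[0,y]) - y\rvert \, d\mu_{A^d}(\boldsymbol{x})\, d\lambda(y)$$
from Lemma \ref{lem:zeta1_expression} throughout. Properties (N) and (C) I would handle simultaneously by a pointwise extremization. Disintegration forces $\int_{\mathbb{I}^d} K_A(\boldsymbol{x},[0,y])\, d\mu_{A^d}(\boldsymbol{x}) = y$, so for each fixed $y \in \mathbb{I}$ the inner integral is bounded by $\sup\{\int_{\mathbb{I}^d} \lvert t - y\rvert\, d\mu_{A^d} : 0 \leq t \leq 1,\; \int t\, d\mu_{A^d} = y\}$. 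By convexity of $s \mapsto \lvert s - y\rvert$ the supremum is attained by $\{0,1\}$-valued $t$, giving the value $2y(1-y)$. Integration in $y$ yields $D_1(A,\Pi_\rho) \leq 1/3$, hence $\zeta^1(\boldsymbol{X},Y) \in [0,1]$ and also explains the normalization constant $3$.

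For the equality case of (C), I would argue that $\zeta^1 = 1$ forces $K_A(\boldsymbol{x},[0,y]) \in \{0,1\}$ for $(\lambda \otimes \mu_{A^d})$-a.e.\ $(y,\boldsymbol{x})$; Fubini and right-continuity in $y$ then promote this to: for $\mu_{A^d}$-a.e.\ $\boldsymbol{x}$, the cdf $y \mapsto K_A(\boldsymbol{x},[0,y])$ is $\{0,1\}$-valued on a dense set and hence equals $\ind_{\{h(\boldsymbol{x}) \leq y\}}$ for some measurable $h$, i.e., $K_A(\boldsymbol{x},\cdot) = \delta_{h(\boldsymbol{x})}$. The disintegration identity (2) implies $h$ is $\mu_{A^d}$-$\lambda$ preserving, so condition (4) of Lemma \ref{lem:completeDependence} applies and $Y$ is a function of $\boldsymbol{X}$. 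Conversely, if $Y = f(\boldsymbol{X})$, Lemma \ref{lem:completeDependence} produces such an $h$, and plugging $K_A(\boldsymbol{x},[0,y]) = \ind_{\{h(\boldsymbol{x}) \leq y\}}$ into the expression for $\zeta^1$ reduces (using $\mu_{A^d} \circ h^{-1} = \lambda$) to $3\int_0^1 2y(1-y)\, dy = 1$. Property (I) falls out directly: $\zeta^1 = 0$ forces $K_A(\boldsymbol{x},[0,y]) = y$ for $\mu_{A^d}$-a.e.\ $\boldsymbol{x}$ and $\lambda$-a.e.\ $y$, whence disintegration gives $\mu_A([0,\boldsymbol{x}] \times [0,y]) = A^d(\boldsymbol{x}) \cdot y$; the converse is obvious.

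For (S), continuity of marginals guarantees uniqueness of the copula, so strictly increasing transformations leave $A$ unchanged. For a strictly decreasing $\varphi_i$ with $1 \leq i \leq d$ I would verify that the copula $A'$ of the transformed vector arises from $A$ by a flip $x_i \mapsto 1-x_i$, and then invoke a change of variables in $x_i$: the push-forward of $\mu_{A^d}$ under the flip is $\mu_{A'^d}$, while $K_{A'}(\boldsymbol{x},[0,y])$ is just $K_A$ composed with the flip, so the integral in Lemma \ref{lem:zeta1_expression} is preserved. For a strictly decreasing $\varphi_\rho$, the new kernel satisfies $K_{A'}(\boldsymbol{x},[0,y]) = 1 - K_A(\boldsymbol{x},[0,1-y])$ (continuity of $G$ removing the mass at $1-y$), so $\lvert K_{A'}(\boldsymbol{x},[0,y]) - y\rvert = \lvert K_A(\boldsymbol{x},[0,1-y]) - (1-y)\rvert$ and the substitution $y \mapsto 1-y$ finishes the argument.

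Property (IG) I would prove by a single application of Jensen's inequality. Writing $A_k$ for the copula of $(X_1,\ldots,X_k,Y)$, the tower property yields
$$K_{A_{k-1}}(x_1,\ldots,x_{k-1},[0,y]) = \int_{\mathbb{I}} K_{A_k}(x_1,\ldots,x_k,[0,y])\, K_{A_k^k}(x_1,\ldots,x_{k-1},dx_k),$$
so convexity of $s \mapsto \lvert s - y\rvert$, applied inside this integral, and subsequent integration against $\mu_{A_{k-1}^{k-1}}$ via Fubini reduces to integration against $\mu_{A_k^k}$; integrating in $y$ completes (IG). The main technical obstacle is the equality case of (C), specifically turning the a.e.\ $\{0,1\}$-valuedness of the kernel into a bona fide measurable selection $h$ that is $\mu_{A^d}$-$\lambda$ preserving, which is where the continuity hypothesis on $H$ and a careful Fubini/null-set argument become indispensable.
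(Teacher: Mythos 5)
Your proposal is correct and follows essentially the same route as the paper: the same key identity from Lemma \ref{lem:zeta1_expression}, the same extremal bound $\phi_{A,\Pi_\rho}(y)\le 2y(1-y)$ with equality exactly for indicator-valued kernels (which the paper obtains by citing Lemma 3.3 of Boonmee--Tasena and Lemma 12 of Trutschnig rather than deriving it by hand), the same flip/change-of-coordinates argument for scale invariance, and the same tower-property-plus-triangle-inequality step for the information-gain chain. The only difference is presentational: you carry out the pointwise extremization, the equality analysis, and the promotion of the a.e.\ $\{0,1\}$-valued kernel to a Dirac kernel $\delta_{h(\boldsymbol{x})}$ explicitly, where the paper defers these steps to cited results and to Lemma \ref{lem:completeDependence}.
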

\begin{proof}
	Considering that $D_1$ is a metric on $\mathcal{C}_{\Pi_d}^\rho$ the second assertion {directly follows if we can show that $L(A) = \Pi_\rho$ if, and only if, $Y$ and $\boldsymbol{X}$ are independent, which 
		can be done as follows: Since $Y$ and $\boldsymbol{X}$ have continuous marginal distribution functions 
		Sklar's theorem implies that $Y$ and $\boldsymbol{X}$ are independent if, and only if, $V$ and $\boldsymbol{U}$, defined according to Lemma \ref{lem:completeDependence}, are independent. Using the independence property of conditional expectations (see \cite{klenke}) we get that $K_A(\boldsymbol{x}, [0,y]) = y$ for every $y \in \mathbb{I}$ and $\mu_{A^{d}}$-a.e. $\boldsymbol{x} \in \mathbb{I}^d$. Hence, applying Lemma \ref{lem::MKLinkage} we have 
		$$L(A)(\boldsymbol{x},y) = \int_{[0,\boldsymbol{x}]} y d\lambda^d(\boldsymbol{u}) = \Pi_\rho(\boldsymbol{x}, y).$$ 
		Suppose now that $L(A) = \Pi_\rho$. Then applying Lemma \ref{lem:derivative} we get for every $y \in \mathbb{I}$ that $ K_{L(A)}(\boldsymbol{x}, [0,y]) = y$ for $\lambda^d$-a.e. $\boldsymbol{x} \in \mathbb{I}^d$. Hence, using Lemma \ref{lem::psiphi} and \ref{lem::MKLinkage} it follows that  
		\begin{align*}
			A(\boldsymbol{x}, y) &= \int_{[0,\boldsymbol{x}]} K_A(\boldsymbol{s}, [0,y]) \, d\mu_{A^{d}}(\boldsymbol{s}) = \int_{\mathbb{I}^d} \ind_{[0,\boldsymbol{x}]}(\boldsymbol{s}) K_A(\boldsymbol{s}, [0,y]) \, d\mu_{A^{d}}(\boldsymbol{s}) \\
			&= \int_{\mathbb{I}^d} \ind_{[0,\boldsymbol{x}]}\circ \Psi(\boldsymbol{s}) K_A(\Psi(\boldsymbol{s}), [0,y]) \, d\lambda^d(\boldsymbol{s}) = y \, \int_{\mathbb{I}^d} \ind_{[0,\boldsymbol{x}]}\circ \Psi(\boldsymbol{s}) \, d\lambda^d(\boldsymbol{s}) \\
			&= y \int_{[0,\boldsymbol{x}]} d\mu_{A^{d}} = A^d(\boldsymbol{x}) y,
		\end{align*} 
		implying that $V$ and $\boldsymbol{U}$ are independent.}\\
	Let $y \in (0,1)$ be arbitrary but fixed and set 
	\begin{align*}
		D_y:=\left\{f:\mathbb{I}^d \to [0,1]: f \text{ is {measurable} and }\int_{\mathbb{I}^d} f(\boldsymbol{x}) d\mu_{A^{d}}(\boldsymbol{x})=y \right\}.
	\end{align*}
	Then according to \cite[Lemma 3.3]{bonmee2016} $f$ maximizes the function 
	$$\psi:\hat{f} \mapsto \int_{\mathbb{I}^d} \left|\hat{f}(\boldsymbol{x}) - y\right| d\mu_{A^{d}}$$
	on $D_y$ if and only if $f$ is an indicator function. In addition to that, we obtain $\max_{f \in D_y} \int_{\mathbb{I}^d} \left| f(\boldsymbol{x}) - y\right| d\mu_{A^{d}}(\boldsymbol{x}) = 2y(1-y)$. Following the proof of \cite[Lemma 12]{trutschnig2011} we conclude that 
	that for every $A \in \mathcal{C}^\rho$ the function $\phi_{A,\Pi_\rho}$ fulfills $\phi_{A,\Pi_\rho}(y) \leq 2y(1-y)$ for every $y \in [0,1]$ with equality if and only if $(\boldsymbol{X},Y)$ is completely dependent. Hence 
	$\zeta^1(\boldsymbol{X},Y) \leq 1$ with equality $\zeta^1(\boldsymbol{X},Y) = 1$ if and only if 
	$(\boldsymbol{X},Y)$ is completely dependent. To show the remaining assertions we proceed as follows:
	
	According to \cite[Theorem 2.4.1]{durante} copulas are invariant with respect to continuous and strictly increasing transformations, hence, assertion (4) is trivial if $\varphi_1, \ldots, \varphi_\rho$ are all strictly increasing. Otherwise we can proceed as follows: Let $I=\{i_1,\ldots, i_k\}$ denote the set of indices corresponding to strictly decreasing mappings $(\varphi_{i_j})_{j \in \{1,\ldots, k\}}$ and define $\sigma_i:\mathbb{I} \to \mathbb{I}$ by 
	\begin{align*}
		\sigma_i(u) = \begin{cases}
			1-u & \text{if } i \in I\\
			u & \text{otherwise.}
		\end{cases}
	\end{align*}
	Following \cite[Theorem 2.4.3]{durante} we denote the copula $\tilde{A}$ underlying the random vector $(\boldsymbol{\varphi}(\boldsymbol{X}), \varphi_\rho(Y))$ by 
	\begin{align*}
		\tilde{A}(\boldsymbol{u},v) &:= \mu_A^{(\sigma_1,\ldots, \sigma_d,\sigma_\rho)}([0,u_1] \times \ldots \times [0,u_d] \times [0,v]) \\
		&=\mathbb{P}(\sigma_1(U_1) \leq u_1,\ldots, \sigma_d(U_d) \leq u_d, \sigma_{\rho}(V) \leq v).
	\end{align*}
	Suppose that $\rho \in I$, i.e., that $\varphi_\rho$ is strictly decreasing (otherwise the proof proceeds 
	in the same manner with several simplifications). Using change of coordinates as well as the fact that $$K_{\tilde{A}}(\sigma_1(x_1), \ldots, \sigma_d(x_d),[0,y]) = K_A(x_1,\ldots, x_d, [1-y,1])$$
	holds for every $y \in [0,1]$ and $\mu_{A^{d}}$-a.e. $\boldsymbol{x} \in \mathbb{I}^d$ we have 
	\begin{align*}
		\int_{\mathbb{I}} \int_{\mathbb{I}^d} \bigg| K_{\tilde{A}}&(\boldsymbol{x},[0,y]) - y \bigg| \, d\mu_{\tilde{A}^d}(\boldsymbol{x}) d\lambda(y) \\
		& \quad = \int_{\mathbb{I}} \int_{\mathbb{I}^d} \left| K_{\tilde{A}}(\boldsymbol{x},[0,y]) - y \right| \, d\mu_{A^d}^{(\sigma_1,\ldots, \sigma_d)}(\boldsymbol{x}) d\lambda(y) \\
		&\quad =\int_{\mathbb{I}} \int_{\mathbb{I}^d} \left| K_{\tilde{A}}(\sigma_1(x_1), \ldots, \sigma_d(x_d),[0,y]) - y \right| \, d\mu_{A^d}(x_1,\ldots, x_d) d\lambda(y) \\
		&\quad =\int_{\mathbb{I}} \int_{\mathbb{I}^d} \left| K_{A}(\boldsymbol{x},[1-y,1]) - y \right| \, d\mu_{A^d}(\boldsymbol{x}) d\lambda(y).
	\end{align*}
	Therefore considering 
	$$\int_{\mathbb{I}} \int_{\mathbb{I}^d} \left| K_{A}(\boldsymbol{x},[1-y,1]) - y \right|  d\mu_{A^d}(\boldsymbol{x}) d\lambda(y) = \int_{\mathbb{I}} \int_{\mathbb{I}^d} \left| K_{A}(\boldsymbol{x},[0,y]) - y \right| d\mu_{A^d}(\boldsymbol{x}) d\lambda(y)$$ 
	assertion (4) follows.
	
	\noindent Finally, using disintegration yields that 
	\begin{align*}
		K_{A^{1\rho}}(x_1,[0,y]) &= \int_{\mathbb{I}} K_{A^{12,\rho}}(x_1,x_2,[0,y]) K_{A^{12}}(x_1,dx_2)
	\end{align*}
	holds for $\lambda$-a.e. $x_1 \in \mathbb{I}$ and that 
	\begin{align*}
		K_{A^{1,\ldots ,j-1;\rho}}&(x_1,\ldots, x_{j-1},[0,y]) \\
		&= \int_{\mathbb{I}} K_{A^{1,\ldots, j;\rho}}(x_1,\ldots, x_j,[0,y]) K_{A^{1,\ldots, j-1;j}}(x_1,\ldots, x_{j-1},dx_j)
	\end{align*}
	holds for $\mu_{A^{1,\ldots, j-1}}$-a.e. $(x_1,\ldots, x_{j-1}) \in \mathbb{I}^{j-1}$ and every $j \in \{3, \ldots, d\}$. 
	Using the triangle inequality and disintegration we therefore obtain
	\begin{align*}
		\zeta^1(X_1,Y) &= 3\int_{\mathbb{I}}\int_{\mathbb{I}} \left|K_{A^{1\rho}}(x_1,[0,y])-y\right| d\lambda(x_1) d\lambda(y) \\
		&=3\int_{\mathbb{I}}\int_{\mathbb{I}} \left|\int_{\mathbb{I}} K_{A^{12,\rho}}(x_1,x_2,[0,y]) K_{A^{12}}(x_1,dx_2)-y\right| d\lambda(x_1) d\lambda(y) \\
		&\leq 3\int_{\mathbb{I}}\int_{\mathbb{I}^2} \left| K_{A^{12,\rho}}(x_1,x_2,[0,y]) -y\right| d\mu_{A^{12}}(x_1,x_2) d\lambda(y) \\
		&= \zeta^1((X_1,X_2),Y). 
	\end{align*}
	Proceeding in the same manner yields assertion (5). 
\end{proof}

The following example shows that there are situations where random variables (input/predictor variables) 
considered individually have no influence on the response $Y$, considered jointly, however, they 
provide a lot of information on $Y$.   

\begin{exam}\label{exa:cube}
	Suppose that $(X_1$, $X_2,Y) \sim C_{Cube} \in \mathcal{C}_{\Pi_2}^3$, whereby $C_{Cube}$ denotes the uniform distribution on the union of the four cubes 
	\begin{align*}
		&\left(0,\tfrac{1}{2}\right) \times \left(0,\tfrac{1}{2}\right) \times \left(0,\tfrac{1}{2}\right) \qquad 
		\left(0,\tfrac{1}{2}\right) \times \left(\tfrac{1}{2},1\right) \times \left(\tfrac{1}{2},1\right) \\
		&\left(\tfrac{1}{2},1\right) \times \left(0,\tfrac{1}{2}\right) \times \left(\tfrac{1}{2},1\right) \qquad
		\left(\tfrac{1}{2},1\right) \times \left(0,\tfrac{1}{2}\right) \times \left(\tfrac{1}{2},1\right).
	\end{align*}
	Figure 1 in \cite{vinepaper} depicts the density of the copula $C_{Cube}$. Obviously, $C_{Cube}$ satisfies
	\begin{align*}
		C_{Cube}^{12} = C_{Cube}^{13} = C_{Cube}^{23} = \Pi,
	\end{align*}
	implying $\zeta^1(X_1,Y) = \zeta^1(X_2,Y) = \zeta^1(X_1,X_2) = 0$. In other words: Only knowing $X_1$ or 
	only knowing $X_2$ provides no additional information on $Y$. On the other hand, it is straightforward to verify 
	that $\zeta^1(C_{Cube}) = 0.75$, i.e., knowing $(X_1,X_2)$ provides a lot of information on $Y$. 
\end{exam}
Slightly modifying Example \ref{exa:cube} yields the following striking result.
\begin{thm}\label{thm:maxdep}
	For every $\delta \in [0,1)$ we find a copula $A \in \mathcal{C}^\rho$ with the following properties: 
	If $(X_1,X_2,\ldots, X_d,Y) \sim A$ then $\zeta^1(X_1,Y)  = \zeta^1(X_2,Y) = \ldots = \zeta^1(X_d,Y) = 0$ but 
	$$
	\zeta^1((X_1,X_2,\ldots, X_d),Y)> \delta.
	$$ 
\end{thm}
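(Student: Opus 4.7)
My plan is to exhibit, for every $d \geq 2$, a single copula $A \in \mathcal{C}^{\rho}$ with $\zeta^1(\mathbf{X},Y) = 1$ while all bivariate scores $\zeta^1(X_i, Y)$ vanish; this is stronger than the theorem requires and instantly handles every $\delta \in [0,1)$. The candidate is the ``sum modulo one'' construction: take $U_1, \ldots, U_d$ i.i.d.\ uniform on $\mathbb{I}$, set $X_i := U_i$, and define
\[
Y := (U_1 + U_2 + \cdots + U_d) \bmod 1.
\]

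First, I would check that each univariate marginal is uniform on $\mathbb{I}$. Writing $Y = (U_1 + S) \bmod 1$ with $S := U_2 + \cdots + U_d$ independent of $U_1$ and conditioning on $S$ shows that $Y \sim \mathcal{U}(0,1)$, so $(\mathbf{X}, Y)$ has uniform margins and its joint law is a copula $A$. Since $(X_1, \ldots, X_d) \sim \Pi_d$ we even have $A \in \mathcal{C}_{\Pi_d}^{\rho}$, so $L(A) = A$. Next, I would show $X_i$ and $Y$ are independent for every $i$. Fixing $X_i = u$, we have $Y = (u + \sum_{j \neq i} U_j) \bmod 1$; since $d \geq 2$ we may peel off any one summand $U_{j_0}$ with $j_0 \neq i$ and condition on the remaining $U_k$, reducing $Y$ to the fractional part of a constant plus the uniform variable $U_{j_0}$, which is itself uniform on $\mathbb{I}$. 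Hence the conditional distribution of $Y$ given $X_i$ is $\mathcal{U}(0,1)$, so the bivariate copula of $(X_i, Y)$ is $\Pi$ and $\zeta^1(X_i, Y) = 3 D_1(\Pi, \Pi) = 0$.

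Finally, since $Y = f(\mathbf{X})$ almost surely for the measurable map $f(\mathbf{x}) := (x_1 + \cdots + x_d) \bmod 1$, complete dependence holds and Theorem \ref{thm:properties}(3) yields $\zeta^1(\mathbf{X}, Y) = 1 > \delta$. The only technical ingredient is the ``peel off one uniform summand'' observation, namely that if $V$ is independent of $U \sim \mathcal{U}(0,1)$ then $(U + V) \bmod 1$ is uniform on $\mathbb{I}$; this is a one-line consequence of the fact that adding an independent uniform modulo $1$ is a $\lambda$-preserving operation, and constitutes essentially the only (and very mild) obstacle in the proof. An alternative, closer in spirit to the authors' Example~\ref{exa:cube}, would be a checkerboard construction in which $Y \mid \mathbf{X}$ is concentrated on $[k/N, (k+1)/N]$ with $k = (\lfloor N X_1 \rfloor + \cdots + \lfloor N X_d \rfloor) \bmod N$ and one lets $N \to \infty$, but the $\bmod 1$ construction is cleaner and achieves $\zeta^1 = 1$ directly.
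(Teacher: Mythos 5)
Your proof is correct, and it takes a genuinely different (and in fact stronger) route than the paper. The paper fixes $N>\tfrac{1}{1-\delta}$ and builds an \emph{absolutely continuous} checkerboard copula $C_{Cube}^{N}$ whose mass sits on the cubes $R_N^{i,j,k}$ with $k=i+j-1 \ (mod\ N)$; it then has to do real work to bound $\zeta^1$ from below — relating $C_{Cube}^{N}$ to $\tilde{C}_{Cube}^{N}$ via interval exchange transformations and estimating $D_1(\mathfrak{CB}_N(M),\Pi_2)\geq \tfrac13-\tfrac{1}{3N}$ using the contraction property of the transformation matrices $V_N$ from \cite{trutschnig2011} — obtaining only $\zeta^1 \geq 1-\tfrac1N>\delta$, with the copula depending on $\delta$. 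Your construction $Y=(U_1+\cdots+U_d)\bmod 1$ is precisely the continuous limit of that checkerboard (your own closing remark makes the link to Example~\ref{exa:cube} explicit), and it short-circuits the quantitative estimate entirely: complete dependence plus Theorem~\ref{thm:properties}(3) gives $\zeta^1(\boldsymbol{X},Y)=1$ exactly, so one single copula serves all $\delta\in[0,1)$, and the pairwise independence follows from the elementary fact that adding an independent uniform modulo $1$ is $\lambda$-preserving (your "peel off one summand" step, which correctly uses $d\geq 2$; note the theorem is only meaningful for $\rho\geq 3$, consistent with the paper's proof). All hypotheses you invoke are met: the margins are uniform so the copula is unique and the joint distribution function is continuous, as required by Theorem~\ref{thm:properties}. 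The only thing the paper's longer argument buys that yours does not is that its witnesses are absolutely continuous checkerboard copulas (tying into the estimator machinery of Section 6), whereas your copula is singular, concentrated on the graph of $f(\boldsymbol{x})=(x_1+\cdots+x_d)\bmod 1$ — but the theorem does not ask for absolute continuity, so this is a matter of taste, not a gap.
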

\begin{proof} 
	Fix $N \in \mathbb{N}$ satisfying $N > \frac{1}{1-\delta}$ and let $R_N^{i,j,k}$ denote the $3$-dimensional hypercube defined by 
	$$R_N^{i,j,k}:= \left[\frac{i-1}{N}, \frac{i}{N}\right] \times \left[\frac{j-1}{N}, \frac{j}{N}\right] \times \left[\frac{k-1}{N}, \frac{k}{N}\right]$$
	for $(i,j,k) \in \{1,\ldots, N\}^3$. Letting $C_{Cube}^N$ denote the uniform distribution on the union of the $N^2$ cubes $R_N^{i,j,k}$ whereby $(i,j) \in \{1,\ldots, N\}^2$ and $k:= j+i-1 (mod \ N)$, it is straightforward to verify that $\left(C_{Cube}^N\right)^{12} = \left(C_{Cube}^N\right)^{13} = \left(C_{Cube}^N\right)^{23} = \Pi_2$. 
	Figure \ref{fig:Ncube} (top panel) depicts the density of $C_{Cube}^4$.  
	\begin{figure}[!ht]
		\centering
		\includegraphics[width=11.8cm]{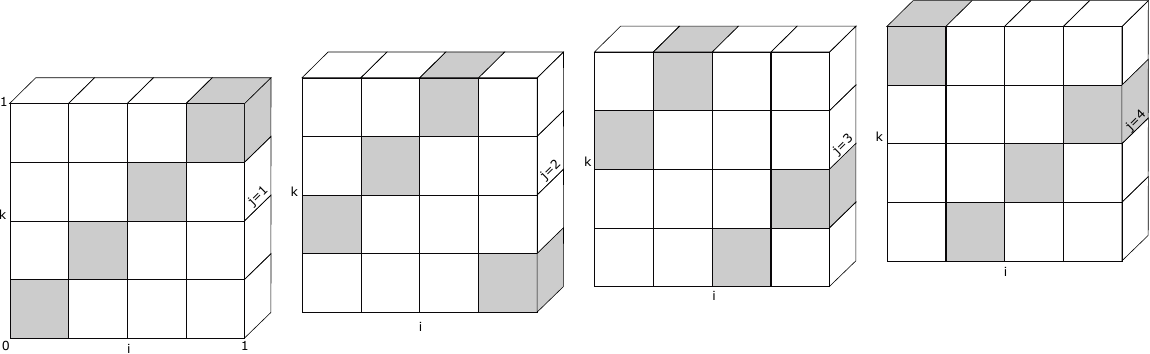}
		\includegraphics[width=11.8cm]{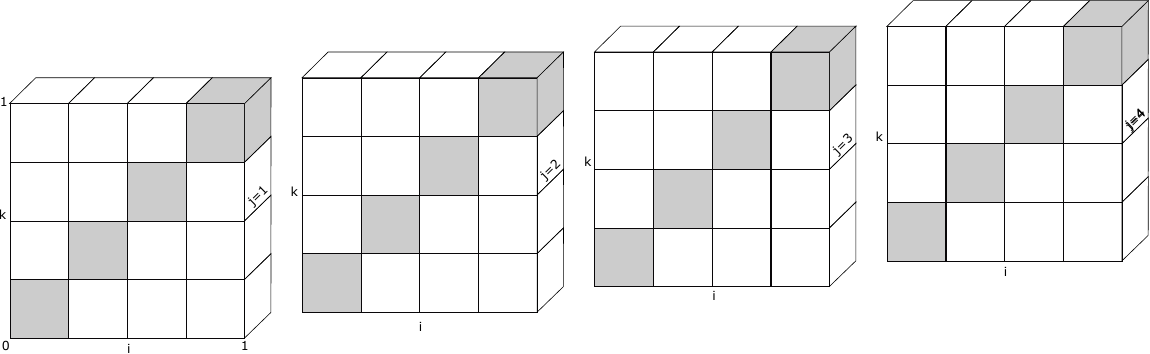}
		\caption{Density of the copula $C_{Cube}^N$ (top panel) and the copula 
			$\tilde{C}_{Cube}^N$ (bottom panel) for $N=4$ defined in the proof of Theorem \ref{thm:maxdep}. 
			The dependence measure $\zeta^1$ fulfills $\zeta^1\left(\left(C_{Cube}^N\right)^{13}\right) = \zeta^1\left(\left(C_{Cube}^N\right)^{23}\right) = \zeta^1\left(\left(C_{Cube}^N\right)^{12}\right) = 0$ as well as
			$\zeta^1\left(C_{Cube}^N\right) = \tfrac{7}{8} = \zeta^1\left(\tilde{C}_{Cube}^N\right)$.}
		\label{fig:Ncube}
	\end{figure}
	Let $\tilde{C}_{Cube}^N$ denote the uniform distribution on the union of the cubes $R_N^{i,j,i}$ with $(i,j) \in \{1,\ldots, N\}^2$. Figure \ref{fig:Ncube} (bottom panel) depicts the density of $\tilde{C}_{Cube}^4$. 
	For every $j \in \{2,\ldots, N\}$ there exists an interval exchange transformations $h_j:\mathbb{I} \to \mathbb{I}$ such that 
	$$K_{C_{Cube}^N}(h_j(x_1), x_2,[0,y]) = K_{\tilde{C}_{Cube}^N}(x_1,x_2,[0,y])$$
	holds for $\lambda^2$-a.e. $(x_1,x_2) \in \mathbb{I} \times \left(\frac{j-1}{N}, \frac{j}{N}\right)$ and every $y \in \mathbb{I}$. Since every $h_j$ is $\lambda$-preserving using change of coordinates it is straightforward to verify that  
	$D_1\left(C_{Cube}^N,\Pi_3\right) = D_1\left(\tilde{C}_{Cube}^N, \Pi_3\right)$ holds for every $N \in \mathbb{N}$. 
	It remains to show  $$D_1\left(\left(\tilde{C}_{Cube}^N\right)^{13}, \Pi_2\right) = D_1(\mathfrak{CB}_N(M),\Pi_2) \geq \delta$$ 
	which can be done as follows:  
	According to the results of Section 6 in \cite{trutschnig2011} we can find a transformation matrix $T_N$ such 
	that $V_N \, \Pi_2 := V_N(\mu_{\Pi_2}) = \mu_{\mathfrak{CB}_N(M)}$ holds (see Eq. 29 in \cite{trutschnig2011}). 
	Using the triangle inequality and the fact that {for all bivariate copulas $C_1,C_2 \in \mathcal{C}^2$ the inequality}
	$$D_1(V_N\ C_1,V_N \ C_2) \leq \tfrac{1}{N} D_1(C_1,C_2)$$
	holds, we get
	\begin{align*}
		D_1\left(\mathfrak{CB}_N(M),\Pi_2\right) & \geq D_1(\Pi_2, M) - D_1\left(\mathfrak{CB}_N(M), M\right) \\
		&=\frac{1}{3} - D_1(V_N \ \Pi_2, V_N \ M) \geq \frac{1}{3} - \frac{1}{N}D_1(\Pi_2,M) \\
		&= \frac{1}{3} - \frac{1}{3N}.
	\end{align*}
	Letting $(X_1,X_2,Y)$ be a random vector with distribution function $C_{Cube}^N$ we therefore have
	$$\zeta^1((X_1,X_2),Y) =\zeta^1(C_{Cube}^N)= \zeta^1(\tilde{C}_{Cube}^N) \geq 1- \tfrac{1}{N} > \delta,$$
	whereas $\zeta^1(X_1,Y) = \zeta^1(X_2,Y) = 0$, which proves the assertion for dimension 3. 
	Adding uniformly distributed random variables $X_3, \ldots, X_d$ such that $(X_3,\ldots, X_d)$ and $(X_1,X_2,Y)$ are independent completes the proof for arbitrary $\rho \geq 3$.  
\end{proof}

{While Theorem \ref{thm:maxdep} demonstrates that considering additional `input' variables can increase the information gain on $Y$ significantly, it is equally interesting to know, in which cases 
	adding input variables provides no further information on the output $Y$ w.r.t. $\zeta^1$. The following proposition shows that the conditional independence property as defined in Eq. \eqref{simplified}, is sufficient.  
	\begin{prop}\label{prop:no_information}
		If $(X_1,X_2,\ldots, X_d,Y) \sim A$ and let $X_2,\ldots, X_d,Y$ be conditionally independent given the variable $X_1$ (i.e., Eq. \eqref{simplified} is satisfied), then we have 
		\begin{align}\label{eq:no_information}
			\zeta^1((X_1,\ldots, X_d), Y) = \zeta^1(X_1,Y).
		\end{align}
	\end{prop}
	\begin{proof}
		Since $X_2,\ldots, X_d,Y$ are conditional independent given $X_1$ we can apply Lemma \ref{lem:simplified} (2) and obtain that for every $y \in \mathbb{I}$
		$$K_A(x_1,\ldots, x_d,[0,y]) = K_{A^{1\rho}}(x_1, [0,y])$$
		holds for $\mu_{A^d}$-a.e. $\boldsymbol{x} \in \mathbb{I}^d$. Therefore, we have 
		\begin{align*}
			\zeta^1(X_1,Y) &= 3\int_{\mathbb{I}}\int_{\mathbb{I}} |K_{A^{1\rho}}(x_1,[0,y]) - y| d\lambda(x_1)d\lambda(y)\\
			&=3\int_{\mathbb{I}}\int_{\mathbb{I}^d} |K_{A^{1\rho}}(x_1,[0,y]) - y| d\mu_{A^{d}}(x_1,\ldots, x_d)d\lambda(y)\\
			&=3\int_{\mathbb{I}}\int_{\mathbb{I}^d} |K_{A}(x_1,\ldots, x_d,[0,y]) - y| d\mu_{A^{d}}(x_1,\ldots, x_d)d\lambda(y)\\
			&= \zeta^1((X_1,\ldots, X_d),Y),
		\end{align*}
		which completes the proof.
	\end{proof}
	At the first glance, it might seem natural that the conditional independence property is also a necessary condition for Eq. \eqref{eq:no_information} - the following example falsifies this conjecture.  
	\begin{exam}
		Let $A \in \mathcal{C}_{\Pi_2}^3$ be defined by
		$$A(x_1,x_2,y):= x_1x_2y + \frac{1}{2} x_1(1-x_1)x_2^2y(1-y).$$
		It is straightforward to verify that $A$ is a $3$-dimensional copula with the following two-dimensional marginals: $A^{12}(x_1,x_2) = x_1x_2$, $A^{23}(x_2,y) = x_2y$ and $A^{13}(x_1,y)=x_1y+\frac{1}{2}x_1(1-x_1)y(1-y)$. Hence, the corresponding Markov kernels are given by
		\begin{description}
			\item $K_{A^{12}}(x,[0,z]) = K_{A^{23}}(x,[0,z]) = z$ for $\lambda$-a.e. $x\in \mathbb{I}$,
			\item $K_{A^{13}}(x_1, [0,y]) = y + \frac{1}{2}(2x_1-1)y(y-1)$ for $\lambda$-a.e. $x_1\in \mathbb{I}$,
			\item $K_{A}(x_1, [0,x_2] \times [0,y])  = x_2y + \frac{1}{2}(2x_1-1)x_2^2y(y-1)$ for $\lambda$-a.e. $x_1\in \mathbb{I}$ and
			\item $K_A(x_1,x_2,[0,y])  = y + (2x_1-1)x_2(y-1)y$ for $\lambda^2$-a.e. $(x_1,x_2)\in \mathbb{I}^2$.
		\end{description} 
		Obviously, we have that $K_{A}(x_1, [0,x_2] \times [0,y]) \neq K_{A^{12}}(x_1,[0,y])K_{A^{13}}(x_1, [0,y])$ for $\lambda$-a.e. $x_1 \in \mathbb{I}$, so Eq. \eqref{simplified} does not hold and $X_2$ and $Y$ are not conditionally independent given $X_1$. Nevertheless, the integrand of $D_1$ fulfills  
		\begin{align}
			|K_{A^{13}}(x_1,[0,y]) - y| & = \left|\int_{\mathbb{I}} \left(K_A(x_1,x_2,[0,y]) - y\right) K_{A^{12}}(x_1,dx_2)\right| \\
			&\leq \int_{\mathbb{I}} \left| K_A(x_1,x_2,[0,y]) - y   \right| K_{A^{12}}(x_1,dx_2), \label{eq:no_info_gain}
		\end{align}
		with equality if and only if $K_A(x_1,x_2,[0,y]) \geq y$ or $K_A(x_1,x_2,[0,y]) \leq y$ for $K_{A^{12}}(x_1,\cdot)$-a.e. $x_2 \in \mathbb{I}$. The latter implies that $D_1(A^{13},\Pi_2) = D_1(A,\Pi_3)$ if, and only if, we have equality in Eq. \eqref{eq:no_info_gain} for $\lambda^2$-a.e. $(x_1,y) \in \mathbb{I}^2$. Since in our example obviously
		\begin{align*}
			K_A(x_1,x_2,[0,y])  = y + (2x_1-1)x_2(y-1)y \geq y \Leftrightarrow (2x_1-1)(y-1)y \geq 0,
		\end{align*}
		we have equality in Eq. \eqref{eq:no_info_gain}, implying 
		$$\zeta^1(X_1,Y) = \zeta^1((X_1,X_2),Y) = \frac{1}{8}.$$  
\end{exam} }

\section{A strongly consistent estimator for $\zeta^1(\boldsymbol{X},Y)$}
As illustrated in Proposition 3.3 in \cite{junker}, without prior aggregation the empirical {multilinear} copula 
can not be used to estimate $\zeta^1$ in full generality. Mimicking the approach \cite{junker} we will 
use so-called empirical checkerboard copulas and construct a strongly consistent estimator for $\zeta^1$ in full generality, i.e., without any regularity assumptions on the underlying copula. The following lemma will be key for establishing strong consistency (see Lemma 1 in \cite{janssen2012}). 

\begin{lem}\label{lem:empCop}
	Suppose that $A\in \mathcal{C}^\rho$ and $\hat{A}_n$ denotes the empirical {multilinear} copula estimator. Then with probability $1$ we have
	\begin{align}
		d_\infty(\hat{A}_n, A) = \mathcal{O}\left(\sqrt{\frac{\log\log n}{n}}\right).
	\end{align}
\end{lem}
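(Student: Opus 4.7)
The plan is to reduce the bound on $d_\infty(\hat{A}_n, A)$ to the multivariate law of the iterated logarithm for the empirical distribution function, exploiting the fact that the multilinear empirical copula $\hat{A}_n$ is, up to a deterministic multilinear-interpolation error of order $1/n$, obtained from the empirical distribution $\hat{H}_n$ via the Sklar representation.

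I would begin by recalling the Kiefer–Chung–Smirnov multivariate law of the iterated logarithm: for the empirical distribution function $\hat{H}_n$ of an i.i.d.\ sample from a $\rho$-dimensional distribution function $H$, one has
\begin{align*}
\sup_{\boldsymbol{x} \in \mathbb{R}^\rho} \lvert \hat{H}_n(\boldsymbol{x}) - H(\boldsymbol{x}) \rvert = \mathcal{O}\!\left(\sqrt{\tfrac{\log\log n}{n}}\right) \quad \text{a.s.,}
\end{align*}
and analogously $\sup_x |\hat{F}_{i,n}(x) - F_i(x)| = \mathcal{O}(\sqrt{\log\log n / n})$ a.s.\ for every univariate marginal, together with the corresponding bounds for the pseudo-inverses $\hat{F}_{i,n}^{-} - F_i^{-}$ obtained by the standard Skorokhod inversion argument.

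Next I would write $A(\boldsymbol{u}) = H(F_1^{-}(u_1),\ldots,F_\rho^{-}(u_\rho))$ (using the multilinear extension of the subcopula where the $F_i$ fail to be continuous) and identify $\hat{A}_n(\boldsymbol{u})$ with $\hat{H}_n(\hat{F}_{1,n}^{-}(u_1),\ldots,\hat{F}_{\rho,n}^{-}(u_\rho))$ modulo an $\mathcal{O}(1/n)$ interpolation remainder arising from the multilinear fill-in on grid cells of edge length $1/n$. Inserting the hybrid term $H(\hat{F}_{1,n}^{-}(u_1),\ldots,\hat{F}_{\rho,n}^{-}(u_\rho))$ and applying the triangle inequality splits the error into two pieces: the joint empirical fluctuation $\hat{H}_n - H$ evaluated at random points, which is bounded uniformly by the multivariate LIL, and the composition fluctuation $H(\hat{F}_{1,n}^{-}(\cdot),\ldots) - H(F_1^{-}(\cdot),\ldots)$, which by the coordinatewise $1$-Lipschitz property of a distribution function with uniform marginals reduces to the univariate LIL bounds on $\hat{F}_{i,n}^{-} - F_i^{-}$.

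The main obstacle, and the reason the multilinear extension is needed, is the general (possibly non-continuous marginal) setting, where the empirical subcopula and the population subcopula live on different grids, so a naive composition is not directly available. The resolution is that the multilinear interpolation is Lipschitz-$1$ in each coordinate with respect to its grid values, which absorbs these grid discrepancies into a further $\mathcal{O}(1/n)$ term that is dominated by $\sqrt{\log\log n / n}$. Assembling the three contributions yields the claimed almost-sure rate.
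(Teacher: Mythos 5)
The paper does not actually prove this lemma; it is quoted from Lemma~1 of Janssen, Swanepoel and Veraverbeke (2012), whose proof is precisely the decomposition you describe: write the empirical copula as $\hat{H}_n(\hat{F}_{1,n}^{-}(u_1),\ldots,\hat{F}_{\rho,n}^{-}(u_\rho))$ up to an $\mathcal{O}(1/n)$ interpolation term, insert the hybrid term $H(\hat{F}_{1,n}^{-}(u_1),\ldots,\hat{F}_{\rho,n}^{-}(u_\rho))$, control the first piece by Kiefer's multivariate law of the iterated logarithm, and control the second piece via the marginals. So your architecture is the standard (and intended) one.

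However, one step as you state it would fail. You claim a uniform almost-sure bound $\sup_u\lvert \hat{F}_{i,n}^{-}(u)-F_i^{-}(u)\rvert=\mathcal{O}(\sqrt{\log\log n/n})$ ``by Skorokhod inversion'' and then reduce the composition fluctuation to it using a ``coordinatewise $1$-Lipschitz property'' of $H$. Neither half is correct: quantile functions do not converge uniformly at this rate (they need not converge uniformly at all if $F_i$ has flat stretches, and even for smooth strictly increasing $F_i$ the rate degrades where the density vanishes, e.g.\ in the Gaussian tails), and a general joint distribution function $H$ is not $1$-Lipschitz in its raw arguments (it can jump). The correct reduction writes $H=A\circ(F_1,\ldots,F_\rho)$ and uses the $1$-Lipschitz property of the \emph{copula} $A$ in each coordinate, so that
\begin{align*}
\bigl\lvert H(\hat{F}_{1,n}^{-}(u_1),\ldots)-H(F_1^{-}(u_1),\ldots)\bigr\rvert
\;\leq\; \sum_{i=1}^{\rho}\bigl\lvert F_i(\hat{F}_{i,n}^{-}(u_i))-F_i(F_i^{-}(u_i))\bigr\rvert ,
\end{align*}
and then bounds $\sup_u\lvert F_i(\hat{F}_{i,n}^{-}(u))-u\rvert\leq \sup_x\lvert \hat{F}_{i,n}(x)-F_i(x)\rvert+\tfrac{1}{n}$, which is the quantity actually controlled by the univariate Chung--Smirnov LIL. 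With this substitution (and your $\mathcal{O}(1/n)$ accounting for the multilinear interpolation, which is fine since subcopulas are $1$-Lipschitz on their grids), the argument closes.
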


\noindent Our so-called empirical checkerboard estimator $\hat{\zeta}_n^1$ of $\zeta^1$ is defined by 
\begin{align*}
	\hat{\zeta}^1_n &:= 3 \int_{\mathbb{I}} \int_{\mathbb{I}^d} \lvert K_{L(\mathfrak{CB}_{N(n)}(\hat{A}_n))}(\boldsymbol{x},[0,y]) - y \rvert \, d\lambda^d(\boldsymbol{x}) d\lambda(y)\\
	&=3 \int_{\mathbb{I}} \int_{\mathbb{I}^d} \lvert K_{\mathfrak{CB}_{N(n)}(\hat{A}_n)}(\boldsymbol{x},[0,y]) - y \rvert \, d\mu_{(\mathfrak{CB}_{N(n)}(\hat{A}_n))^d}(\boldsymbol{x}) d\lambda(y),
\end{align*}
where the resolution $N(n)$ depends on the sample size $n$. In other words: We aggregate the empirical {multilinear} copula to a checkerboard with a coarser grid and then plug the checkerboard copula 
in the analytic expression of $\zeta^1(A)$.  
Considering that according to Theorem \ref{thm:discont} the linkage operation is discontinuous w.r.t. $d_\infty$, 
the proof in the $2$-dimensional setting as established in \cite{junker} can not be directly applied to the 
$\rho$-dimensional setting considered here, a different approach is needed. We start with the following lemma 
whose proof is analogous to the one of \cite[Lemma 3.10]{junker}: 
\begin{lem}\label{lem::ineq}
	Suppose that $A,B \in \mathcal{C}^\rho$. Then the corresponding checkerboard approximations $\mathfrak{CB}_N(A)$, $\mathfrak{CB}_N(B) \in \mathcal{CB}_N$ fulfill
	$$d_\infty(\mathfrak{CB}_N(A),\mathfrak{CB}_N(B)) \leq d_\infty(A,B).$$
\end{lem}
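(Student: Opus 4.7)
The plan is to exploit the explicit structure of the checkerboard approximation and show that its values at \emph{any} point of $\mathbb{I}^\rho$ are built from the values of the underlying copula at a fixed finite set of grid points, via a convex combination whose weights depend only on the resolution $N$ and on the location of the evaluation point, but \textbf{not} on the copula itself. Once this is established, the Lipschitz bound follows by the triangle inequality almost for free.

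First, I would verify that on the grid $G_N:=\{0,\tfrac{1}{N},\ldots,1\}^\rho$ the checkerboard copula agrees with the original one. Indeed, for any $\boldsymbol{p}=(\tfrac{\boldsymbol{i}}{N},\tfrac{j}{N})\in G_N$ the set $[0,\boldsymbol{p}]$ is exactly the union of the closed hypercubes $R_N^{\boldsymbol{i}'}\times[\tfrac{j'-1}{N},\tfrac{j'}{N}]$ with $\boldsymbol{i}'\leq\boldsymbol{i}$ componentwise and $j'\leq j$. Since, by the defining equation \eqref{def::CB}, $\mathfrak{CB}_N(A)$ redistributes mass uniformly \emph{inside} each such cube but preserves the total mass assigned to each cube, one obtains $\mathfrak{CB}_N(A)(\boldsymbol{p})=\mu_A([0,\boldsymbol{p}])=A(\boldsymbol{p})$, and likewise for $B$.

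Second, I would show that on an arbitrary cube $R_N^{\boldsymbol{i}}\times[\tfrac{j-1}{N},\tfrac{j}{N}]$ the function $\mathfrak{CB}_N(A)$ is the multilinear interpolation of its values at the $2^\rho$ corners of that cube. This is a direct calculation: writing $(\boldsymbol{x},y)=\tfrac{1}{N}(\boldsymbol{i}-1,j-1)+\tfrac{1}{N}(\boldsymbol{t},s)$ with $(\boldsymbol{t},s)\in\mathbb{I}^\rho$, splitting $[0,\boldsymbol{x}]\times[0,y]$ into the cubes that are fully contained in it and the partial remainder in the current cube, and using that the density of $\mathfrak{CB}_N(A)$ is constant on each small cube, one sees that
\begin{equation*}
\mathfrak{CB}_N(A)(\boldsymbol{x},y) \;=\; \sum_{(\boldsymbol{\epsilon},\eta)\in\{0,1\}^\rho} w_{\boldsymbol{\epsilon},\eta}(\boldsymbol{t},s)\, A\!\left(\tfrac{\boldsymbol{i}-1+\boldsymbol{\epsilon}}{N},\tfrac{j-1+\eta}{N}\right),
\end{equation*}
where the weights $w_{\boldsymbol{\epsilon},\eta}(\boldsymbol{t},s)$ are the standard multilinear-interpolation weights (products of factors $t_\ell$ or $1-t_\ell$ and $s$ or $1-s$). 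In particular $w_{\boldsymbol{\epsilon},\eta}\geq 0$ and $\sum_{(\boldsymbol{\epsilon},\eta)}w_{\boldsymbol{\epsilon},\eta}=1$, and these weights depend only on $N$ and $(\boldsymbol{t},s)$.

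Third, the same identity holds for $B$ with the \emph{same} weights. Subtracting the two expressions and taking absolute values gives, for every $(\boldsymbol{x},y)\in\mathbb{I}^\rho$,
\begin{equation*}
\bigl|\mathfrak{CB}_N(A)(\boldsymbol{x},y)-\mathfrak{CB}_N(B)(\boldsymbol{x},y)\bigr| \;\leq\; \sum_{(\boldsymbol{\epsilon},\eta)} w_{\boldsymbol{\epsilon},\eta}\,\bigl|A(\boldsymbol{p}_{\boldsymbol{\epsilon},\eta})-B(\boldsymbol{p}_{\boldsymbol{\epsilon},\eta})\bigr| \;\leq\; d_\infty(A,B),
\end{equation*}
and taking the supremum over $(\boldsymbol{x},y)\in\mathbb{I}^\rho$ yields the claim. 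The only step that requires any genuine effort is the multilinear-interpolation identity in the middle paragraph; everything else is bookkeeping. I expect that identity to be the main (though still routine) obstacle, since one has to carefully decompose the box $[0,\boldsymbol{x}]\times[0,y]$ into the fully covered sub-cubes and the thin slabs along the ``active'' coordinate hyperplanes, and recognise the resulting coefficients as the tensor product of the one-dimensional weights $\{t_\ell,1-t_\ell\}$ and $\{s,1-s\}$.
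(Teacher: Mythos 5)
Your proof is correct and is essentially the argument the paper intends: it defers to the bivariate case in \cite[Lemma 3.10]{junker}, which rests on exactly your two observations — the checkerboard approximation agrees with the original copula at all grid points (note the overlaps of the closed cubes lie in hyperplanes of $\mu_A$-measure zero since the marginals are uniform, so summing the cube masses causes no double counting), and on each cube it is the multilinear interpolant of its corner values, whence the convex-combination bound and the claim. Nothing further is needed.
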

The next lemma states that $D_1(A,\Pi_\rho)$ can be approximated well by discretization  in $y$.  
\begin{lem}\label{lem::approx}
	For every $A \in \mathcal{C}^\rho$ and $N \geq 1$ we have 
	\begin{align}\label{eq:phi_approx}
		D_1(A,\Pi_\rho) - \frac{2}{N} \leq \frac{1}{N} \sum_{j=1}^N \int_{\mathbb{I}^d} \left|K_A\left(\boldsymbol{x},\left[0,\tfrac{j}{N}\right]\right) - \frac{j}{N} \right| d\mu_{A^{d}}(\boldsymbol{x}) \leq D_1(A,\Pi_\rho) + \frac{2}{N}.
	\end{align} 
\end{lem}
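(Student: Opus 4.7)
Set $\phi(y):=\int_{\mathbb{I}^d}\lvert K_A(\boldsymbol{x},[0,y])-y\rvert\,d\mu_{A^{d}}(\boldsymbol{x})$ for $y\in\mathbb{I}$. The argument in the proof of Lemma \ref{lem:zeta1_expression} (change of coordinates via $\Psi$ together with Lemma \ref{lem::MKLinkage}) shows that this coincides with $\phi_{A,\Pi_\rho}(y)=\int_{\mathbb{I}^d}\lvert K_{L(A)}(\boldsymbol{x},[0,y])-y\rvert\,d\lambda^d(\boldsymbol{x})$ and therefore $D_1(A,\Pi_\rho)=\int_{\mathbb{I}}\phi(y)\,d\lambda(y)$. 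The plan is then to view the sum on the right hand side of \eqref{eq:phi_approx} as the right Riemann sum of $\phi$ with respect to the uniform partition $\{j/N:j=0,\ldots,N\}$ of $\mathbb{I}$ and to control the approximation error using Lipschitz continuity.

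\textbf{Key steps.} First, invoke Proposition \ref{prop:ineq} with $B=\Pi_\rho$ to conclude that $\phi=\phi_{A,\Pi_\rho}$ is Lipschitz continuous on $\mathbb{I}$ with Lipschitz constant $2$. Second, split the integral $\int_0^1\phi(y)\,d\lambda(y)$ into the $N$ pieces $\int_{(j-1)/N}^{j/N}\phi(y)\,d\lambda(y)$ for $j=1,\ldots,N$, and on each such piece estimate
\begin{align*}
\left\lvert\int_{(j-1)/N}^{j/N}\bigl(\phi(y)-\phi(j/N)\bigr)\,d\lambda(y)\right\rvert
\leq \int_{(j-1)/N}^{j/N} 2\left(\tfrac{j}{N}-y\right)\,d\lambda(y)=\frac{1}{N^2}.
\end{align*}
Summing over $j$ yields
\begin{align*}
\left\lvert\int_0^1\phi(y)\,d\lambda(y)-\frac{1}{N}\sum_{j=1}^N\phi(j/N)\right\rvert\leq \frac{1}{N},
\end{align*}
which is even sharper than the claimed bound $2/N$, and rearranging this inequality gives both sides of \eqref{eq:phi_approx}.

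\textbf{Expected difficulty.} There is no real obstacle once one recognises the right hand side of \eqref{eq:phi_approx} as a Riemann sum of the function $\phi$; the only non-trivial ingredient is the rewriting $D_1(A,\Pi_\rho)=\int_0^1\phi(y)\,d\lambda(y)$ via Lemma \ref{lem:zeta1_expression} (so that the same measure $\mu_{A^d}$ appears on both sides rather than $\lambda^d$ on the left and $\mu_{A^d}$ on the right), after which the estimate is a completely standard Lipschitz Riemann-sum bound using the Lipschitz constant $2$ provided by Proposition \ref{prop:ineq}.
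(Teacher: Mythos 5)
Your proof is correct and follows essentially the same route as the paper's: identify the sum in \eqref{eq:phi_approx} as the right Riemann sum of the Lipschitz function $\phi_{A,\Pi_\rho}$ (using Lemma \ref{lem:zeta1_expression} to match the measures) and bound the approximation error via the Lipschitz constant $2$ from Proposition \ref{prop:ineq}. The only difference is that you integrate the Lipschitz estimate exactly on each subinterval, obtaining $1/N$, whereas the paper uses the cruder uniform bound $|\phi(y)-\phi(j/N)|\le 2/N$ on each subinterval, giving $2/N$; both suffice for the stated inequality.
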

\begin{proof} 
	According to Proposition \ref{prop:ineq} the function $\phi_{A,\Pi}$ is Lipschitz-continuous with Lipschitz constant $2$. We therefore have 
	\begin{align*}
		\left| \sum_{j=1}^N \int_{\frac{j-1}{N}}^{\frac{j}{N}} \phi_{A,\Pi_\rho}(y) - \Phi_{A,\Pi_\rho}\left(\tfrac{j}{N}\right)d\lambda(y) \right| &\leq \sum_{j=1}^N \int_{\frac{j-1}{N}}^{\frac{j}{N}} \left|\phi_{A,\Pi_\rho}(y) - \phi_{A,\Pi_\rho}\left(\tfrac{j}{N}\right) \right|d\lambda(y) \\
		&\leq \frac{2}{N},
	\end{align*}
	which, considering
	$$\frac{1}{N} \sum_{j=1}^N \int_{\mathbb{I}^d} \left|K_A\left(\boldsymbol{x},\left[0,\tfrac{j}{N}\right]\right) - \frac{j}{N} \right| d\mu_{A^{d}}(\boldsymbol{x}) = \frac{1}{N} \sum_{j=1}^N \phi_{A,\Pi_\rho}\left(\frac{j}{N}\right)$$
	yields the desired result. 	
\end{proof}

Based on the previous two lemmata we can now prove the main result of this section (despite being technical the 
proof is not moved to the Appendix since it contains various ideas). 

\begin{thm}\label{thm:maintheorem}
	Let $(\boldsymbol{X}_1,Y_1), (\boldsymbol{X}_2,Y_2), \ldots$ be a random sample from $(\boldsymbol{X}, Y)$ and assume that $(\boldsymbol{X}, Y)$ has continuous distribution function $H$ and underlying copula $A \in \mathcal{C}^\rho$. Then setting $N(n):=\lfloor n^s \rfloor$ for some $s$ fulfilling $0<s<\frac{1}{2d}$ 
	\begin{align*}
		\lim_{n \to \infty} \hat{\zeta}^1_n=\lim_{n \to \infty}\zeta^1(\mathfrak{CB}_{N(n)}(\hat{A}_n)) =\zeta^1(A)
	\end{align*}
	holds with probability $1$. 
\end{thm}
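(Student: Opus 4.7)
The plan is to decompose the estimation error via the triangle inequality
$$|\hat{\zeta}^1_n - \zeta^1(A)| \leq \underbrace{\left|\zeta^1(\mathfrak{CB}_{N(n)}(\hat{A}_n)) - \zeta^1(\mathfrak{CB}_{N(n)}(A))\right|}_{\text{stochastic part}} + \underbrace{\left|\zeta^1(\mathfrak{CB}_{N(n)}(A)) - \zeta^1(A)\right|}_{\text{deterministic part}}$$
and to show that both summands vanish. Because Theorem \ref{thm:discont} rules out using continuity of the linkage operator w.r.t. $d_\infty$, a naive argument via $D_1(\hat{A}_n,A)\to 0$ is unavailable. Instead, the common starting point for both pieces is the explicit checkerboard formula
$$\zeta^1(\mathfrak{CB}_N(B)) \;=\; \frac{3}{N}\sum_{j=1}^{N}\sum_{\boldsymbol{i}\in\mathcal{I}}\Big|\mu_B\big(R_N^{\boldsymbol{i}}\times[0,\tfrac{j}{N}]\big) \;-\; \tfrac{j}{N}\,\mu_{B^d}(R_N^{\boldsymbol{i}})\Big| \;+\; \mathcal{O}\!\left(\tfrac{1}{N}\right),$$
valid for every $B\in\mathcal{C}^\rho$, which follows from Lemma \ref{lem:zeta1_expression}, the constancy of $K_{\mathfrak{CB}_N(B)}(\cdot,[0,j/N])$ on each cube $R_N^{\boldsymbol{i}}$, and the discretization estimate Lemma \ref{lem::approx}.

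For the stochastic part I would subtract the above formula at $B=\hat{A}_n$ and $B=A$ and apply the triangle inequality. The essential book-keeping point is to preserve the telescoping identity $\sum_{k=1}^{j}\mu_B(R_N^{\boldsymbol{i},k})=\mu_B(R_N^{\boldsymbol{i}}\times[0,j/N])$, so that each $j$-term corresponds to a \emph{single} rectangle-mass discrepancy, bounded by $2^\rho d_\infty(\hat{A}_n,A)$ via the inclusion--exclusion identity for distribution functions on rectangles; likewise the $\mu_{B^d}$-terms are controlled using $d_\infty(\hat{A}_n^d,A^d)\leq d_\infty(\hat{A}_n,A)$. Since $|\mathcal{I}|=N^d$ and $j$ runs over $N$ values, this gives
$$\big|\zeta^1(\mathfrak{CB}_N(\hat{A}_n))-\zeta^1(\mathfrak{CB}_N(A))\big| \;\leq\; C\,N^{d}\,d_\infty(\hat{A}_n,A) \;+\; \mathcal{O}(1/N).$$
Plugging in Lemma \ref{lem:empCop} and $N=N(n)=\lfloor n^s\rfloor$, the main term is of order $n^{sd-1/2}\sqrt{\log\log n}$, which vanishes almost surely precisely under the hypothesis $s<1/(2d)$.

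For the deterministic part the crux is the observation that on each $R_N^{\boldsymbol{i}}$ the constant $\mu_A(R_N^{\boldsymbol{i}}\times[0,j/N])/\mu_{A^d}(R_N^{\boldsymbol{i}})$ is precisely the $\mu_{A^d}$-conditional expectation $\mathbb{E}_{\mu_{A^d}}[K_A(\cdot,[0,j/N])\,|\,\mathcal{G}_N]$ of $K_A(\cdot,[0,j/N])$ given the $\sigma$-algebra $\mathcal{G}_N:=\sigma(\{R_N^{\boldsymbol{i}}\}_{\boldsymbol{i}\in\mathcal{I}})$. As the partitions refine, $\mathcal{G}_N\uparrow\mathcal{B}(\mathbb{I}^d)$, so Lévy's martingale convergence theorem (equivalently the Lebesgue differentiation theorem) yields, for every fixed $y\in\mathbb{I}$,
$$\int_{\mathbb{I}^d}\!\big|\mathbb{E}_{\mu_{A^d}}[K_A(\cdot,[0,y])\,|\,\mathcal{G}_N]-y\big|\,d\mu_{A^d} \;\xrightarrow{N\to\infty}\; \phi_{A,\Pi_\rho}(y).$$
Inserting this back into the explicit checkerboard formula and invoking dominated convergence in $y$ (the integrand is bounded by $1$, so $\mu_{A^d}\otimes\lambda$ applies) gives $\zeta^1(\mathfrak{CB}_N(A))\to\zeta^1(A)$ as $N\to\infty$, so the deterministic part vanishes along $N=N(n)\to\infty$.

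The main difficulty I anticipate is the combinatorial book-keeping in the stochastic part: a naive bound that estimates $\sum_{k\leq j}|(\mu_{\hat{A}_n}-\mu_A)(R_N^{\boldsymbol{i},k})|$ term by term yields only $\mathcal{O}(N^{d+1} d_\infty(\hat{A}_n,A))$ and hence consistency only for $s<1/(2(d+1))$. Recognising the telescoped inner sum as a single rectangle-mass discrepancy is precisely what buys the extra factor of $N$ and matches the sharp rate $s<1/(2d)$ stated in the theorem.
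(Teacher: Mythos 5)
Your decomposition into a stochastic part $\lvert\zeta^1(\mathfrak{CB}_{N}(\hat{A}_n))-\zeta^1(\mathfrak{CB}_{N}(A))\rvert$ and a deterministic part $\lvert\zeta^1(\mathfrak{CB}_{N}(A))-\zeta^1(A)\rvert$ is exactly the paper's $I_1+I_2$ split, and your treatment of the stochastic part matches the paper's: the explicit checkerboard formula via Lemma \ref{lem:zeta1_expression}, constancy of the kernel on cubes and Lemma \ref{lem::approx}, the bound $C\,N^d\,d_\infty(\hat{A}_n,A)$ obtained by keeping each $j$-term as a single rectangle-mass discrepancy, and then Lemma \ref{lem:empCop} with $N(n)=\lfloor n^s\rfloor$, $s<1/(2d)$. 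Your remark about why the telescoped sum must not be split term by term correctly identifies where the factor $N$ would otherwise be lost.

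The deterministic part, however, contains a genuine gap as written. The $\sigma$-algebras $\mathcal{G}_N=\sigma(\{R_N^{\boldsymbol{i}}\})$ are \emph{not} nested in $N$: the partition into cubes of side $1/(N+1)$ does not refine the partition into cubes of side $1/N$, so $(\mathcal{G}_N)_{N\in\mathbb{N}}$ is not a filtration and L\'evy's upward martingale convergence theorem does not apply (and you cannot retreat to $N=2^k$, since $N(n)=\lfloor n^s\rfloor$ runs through all integers). The parenthetical appeal to the Lebesgue differentiation theorem does not close the gap either: the averages are taken with respect to $\mu_{A^d}$, which is a general $d$-dimensional copula measure and may be singular with respect to $\lambda^d$, so the classical differentiation theorem (stated for $\lambda^d$) is not available, and differentiation of general Radon measures along non-centered cubes needs separate justification. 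The conclusion you want is nevertheless true, and the standard repair is precisely the argument the paper uses: approximate $(\boldsymbol{x},y)\mapsto K_A(\boldsymbol{x},[0,y])$ in $L^1(\mu_{A^d}\times\lambda)$ by a (uniformly) continuous $f_\varepsilon$, use that averaging over the cells of a partition is an $L^1(\mu_{A^d})$-contraction, and note that the cell averages of $f_\varepsilon$ converge uniformly to $f_\varepsilon$ once the mesh $1/N$ is below the modulus-of-continuity threshold; this yields $I_2\le 6/N+2\|K_A-f_\varepsilon\|_{L^1}+\varepsilon<9\varepsilon$ for all large $N$, with no nestedness required. With that substitution your proof is complete and coincides with the paper's.
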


\begin{proof}
	To simplify notation we will write $A_N$ for the checkerboard approximation $\mathfrak{CB}_{N}(A)$ and $\hat{A}_{N}$ for the empirical checkerboard copula $\mathfrak{CB}_{N(n)}(\hat{A}_n)$. Fix $\varepsilon > 0$. Then there exists a continuous function $f_\varepsilon:\mathbb{I}^\rho \to \mathbb{R}$ such that
	\begin{align}\label{eq:L1kernel}
		\Vert K_A - f_\varepsilon \Vert_{L^1(\mu_{A^{d}} \times \lambda)} = \int_{\mathbb{I}^d \times \mathbb{I}} \lvert K_A(\boldsymbol{x}, [0,y]) - f_\varepsilon(\boldsymbol{x},y)\rvert \, d(\mu_{A^{d}} \times \lambda)(\boldsymbol{x},y)  < \varepsilon,
	\end{align} 
	see, for instance, \cite[Theorem 3.14]{rudin} or \cite[Corollary 4.2.2]{bogachevI}. 
	Since $f_\varepsilon$ is uniformly continuous there exists some $\delta \in (0,\varepsilon)$ such that
	$$||\boldsymbol{x}- \boldsymbol{x}'||_\rho  < \delta \Longrightarrow |f_\varepsilon(\boldsymbol{x})-f_\varepsilon(\boldsymbol{x}')| < \varepsilon$$
	holds for all $\boldsymbol{x}, \boldsymbol{x}' \in \mathbb{I}^\rho$. If $N_0 \in \mathbb{N}$ fulfills $\tfrac{1}{N_0} < \delta$ and $N \geq N_0$ then using the triangle inequality yields
	{\small\begin{align*}
			&\left| D_1(\hat{A}_N,\Pi_\rho) - D_1(A, \Pi_\rho)\right|  \\
			&=\left| \int_{\mathbb{I}} \int_{\mathbb{I}^d} \lvert K_{\hat{A}_{N}}(\boldsymbol{x},[0,y]) - y \rvert \, d\mu_{\hat{A}_{N}^{d}}(\boldsymbol{x}) d\lambda(y) - \int_{\mathbb{I}} \int_{\mathbb{I}^d} \lvert K_{A}(\boldsymbol{x},[0,y]) - y \rvert d\mu_{A^{d}}(\boldsymbol{x}) d\lambda(y) \right| \\
			&\leq \underbrace{\left| \int_{\mathbb{I}} \int_{\mathbb{I}^d} \lvert K_{\hat{A}_{N}}(\boldsymbol{x},[0,y]) - y \rvert \, d\mu_{\hat{A}_{N}^{d}}(\boldsymbol{x}) d\lambda(y) - \int_{\mathbb{I}} \int_{\mathbb{I}^d} \lvert K_{A_N}(\boldsymbol{x},[0,y]) - y \rvert d\mu_{A_N^{d}}(\boldsymbol{x}) d\lambda(y) \right|}_{=:I_1} \\
			&+ \underbrace{\left| \int_{\mathbb{I}} \int_{\mathbb{I}^d} \lvert K_{A_{N}}(\boldsymbol{x},[0,y]) - y \rvert \, d\mu_{A_{N}^{d}}(\boldsymbol{x}) d\lambda(y) - \int_{\mathbb{I}} \int_{\mathbb{I}^d} \lvert K_{A}(\boldsymbol{x},[0,y]) - y \rvert d\mu_{A^{d}}(\boldsymbol{x}) d\lambda(y) \right|}_{=:I_2}.
	\end{align*}}
	We first consider the second integral $I_2$. Using the fact that $A_N$ is an $N$-checkerboard copula we have 
	\begin{align}\label{eq:mk_kernel}
		\mu_{A_{N}^{d}}(R_N^{\boldsymbol{i}}) K_{A_N}(\boldsymbol{x},[0,y]) = \mu_{A_N}(R_N^{\boldsymbol{i}} \times [0,y])
	\end{align} 
	for every $\boldsymbol{x} \in int(R_N^{\boldsymbol{i}})$; if $\mu_{A_{N}^{d}}(R_N^{\boldsymbol{i}})=0$ for some $\boldsymbol{i} \in \mathcal{I}$ we obviously have $\mu_{A_N}(R_N^{\boldsymbol{i}} \times [0,y])=0=\mu_{A}(R_N^{\boldsymbol{i}} \times [0,y])$ for every $y \in \mathbb{I}$.  
	Applying Lemma \ref{lem::approx}, Eq.  \eqref{eq:mk_kernel} and the fact that 
	$$\frac{\mu_{A_N}(R_N^{\boldsymbol{i}} \times [0,y])}{\mu_{A_{N}^{d}}(R_N^{\boldsymbol{i}})} = \frac{\mu_{A}(R_N^{\boldsymbol{i}} \times [0,y])}{\mu_{A^{d}}(R_N^{\boldsymbol{i}})}$$
	holds for $y \in \left\{0,\tfrac{1}{N},\tfrac{2}{N}, \ldots, \tfrac{N}{N}\right\}$ and every $\boldsymbol{i} \in \mathcal{I}_0:=\{\boldsymbol{i} \in \mathcal{I} \text{ with } \mu_{A^{d}}(R_N^{\boldsymbol{i}})>0\}$ we get 
	{\small\begin{align*}
			I_2  
			& \leq \frac{4}{N} + \bigg| \frac{1}{N} \sum_{j=1}^N \int_{\mathbb{I}^d} \left|K_{A_N}\left(\boldsymbol{x}, \left[0,\tfrac{j}{N}\right]\right)- \frac{j}{N}\right| \, d\mu_{A_{N}^{d}}(\boldsymbol{x}) \\
			&\hspace*{4.5cm}- \frac{1}{N} \sum_{j=1}^N \int_{\mathbb{I}^d} \left|K_{A}\left(\boldsymbol{x}, \left[0,\tfrac{j}{N}\right]\right) - \frac{j}{N}\right| \, d\mu_{A^{d}}(\boldsymbol{x})\bigg| \\
			&= \frac{4}{N} + \bigg| \frac{1}{N} \sum_{j=1}^N \sum_{\boldsymbol{i} \in \mathcal{I}_0} \int_{R_N^{\boldsymbol{i}}} \left|\frac{\mu_{A_N}\left(R_N^{\boldsymbol{i}} \times \left[0,\tfrac{j}{N}\right]\right)}{\mu_{A_{N}^{d}}(R_N^{\boldsymbol{i}})} - \frac{j}{N}\right| \, d\mu_{A_{N}^{d}}(\boldsymbol{x}) \\
			&\hspace*{4.5cm}- \frac{1}{N} \sum_{j=1}^N \sum_{\boldsymbol{i} \in \mathcal{I}_0} \int_{R_N^{\boldsymbol{i}}} \left|K_{A}\left(\boldsymbol{x}, \left[0,\tfrac{j}{N}\right]\right) - \frac{j}{N}\right| \, d\mu_{A^{d}}(\boldsymbol{x})\bigg| \\
			&= \frac{4}{N} + \bigg| \frac{1}{N} \sum_{j=1}^N \sum_{\boldsymbol{i} \in \mathcal{I}_0} \int_{R_N^{\boldsymbol{i}}} \left|\frac{\mu_{A}\left(R_N^{\boldsymbol{i}} \times \left[0,\tfrac{j}{N}\right]\right)}{\mu_{A^{d}}(R_N^{\boldsymbol{i}})} - \frac{j}{N}\right| \, d\mu_{A^{d}}(\boldsymbol{x}) \\
			&\hspace*{4.5cm}- \frac{1}{N} \sum_{j=1}^N \sum_{\boldsymbol{i} \in \mathcal{I}_0} \int_{R_N^{\boldsymbol{i}}} \left|K_{A}\left(\boldsymbol{x}, \left[0,\tfrac{j}{N}\right]\right) - \frac{j}{N}\right| \, d\mu_{A^{d}}(\boldsymbol{x})\bigg| \\
			&\leq \frac{4}{N} +  \frac{1}{N} \sum_{j=1}^N \sum_{\boldsymbol{i} \in \mathcal{I}_0} \int_{R_N^{\boldsymbol{i}}} \left| \frac{\mu_A\left(R_N^{\boldsymbol{i}} \times \left[0, \tfrac{j}{N}\right]\right)}{\mu_{A^{d}}(R_N^{\boldsymbol{i}})} - K_A\left(\boldsymbol{x}, \left[0,\tfrac{j}{N}\right]\right)\right| \, d\mu_{A^{d}}(\boldsymbol{x})\\
			&=\frac{4}{N} +  \frac{1}{N} \sum_{j=1}^N \sum_{\boldsymbol{i} \in \mathcal{I}_0} \int_{R_N^{\boldsymbol{i}}} \bigg| \frac{1}{\mu_{A^{d}}(R_N^{\boldsymbol{i}})}\int_{R_N^{\boldsymbol{i}}} K_A\left(\boldsymbol{s}, \left[0,\tfrac{j}{N}\right]\right) \\
			&\hspace*{4.5cm}- K_A\left(\boldsymbol{x}, \left[0,\tfrac{j}{N}\right]\right) \, d\mu_{A^{d}}(\boldsymbol{s})\bigg| \, d\mu_{A^{d}}(\boldsymbol{x})\\
			&\leq \frac{4}{N} + \sum_{j=1}^N \int_{\left[\frac{j-1}{N}, \frac{j}{N}\right]} \sum_{\boldsymbol{i} \in \mathcal{I}_0} \int_{R_N^{\boldsymbol{i}}} \frac{1}{\mu_{A^{d}}(R_N^{\boldsymbol{i}})}\int_{R_N^{\boldsymbol{i}}}  \bigg|K_A\left(\boldsymbol{s}, \left[0,\tfrac{j}{N}\right]\right) \\
			&\hspace*{4.5cm} - K_A\left(\boldsymbol{x}, \left[0,\tfrac{j}{N}\right]\right)\bigg| \, d\mu_{A^{d}}(\boldsymbol{s}) \, d\mu_{A^{d}}(\boldsymbol{x}) \, d\lambda(y).
	\end{align*}}
	Again using the triangle inequality, the fact that the mapping $y \mapsto K_A(\cdot, [0,y])$ is non-decreasing as 
	well as disintegration we therefore get 
	{\small\begin{align*}
			I_2 &\leq \frac{4}{N} + \sum_{j=1}^N \int_{\left[\frac{j-1}{N}, \frac{j}{N}\right]} \sum_{\boldsymbol{i} \in \mathcal{I}_0} \int_{R_N^{\boldsymbol{i}}} \frac{1}{\mu_{A^{d}}(R_N^{\boldsymbol{i}})}\int_{R_N^{\boldsymbol{i}}}  \bigg|K_A\left(\boldsymbol{s}, \left[0,\tfrac{j}{N}\right]\right) \\
			&\hspace*{4cm}- K_A\left(\boldsymbol{s}, \left[0,y\right]\right)\bigg| \, d\mu_{A^{d}}(\boldsymbol{s}) \, d\mu_{A^{d}}(\boldsymbol{x}) \, d\lambda(y)\\
			&\quad + \sum_{j=1}^N \int_{\left[\frac{j-1}{N}, \frac{j}{N}\right]} \sum_{\boldsymbol{i} \in \mathcal{I}_0}\int_{R_N^{\boldsymbol{i}}} \frac{1}{\mu_{A^{d}}(R_N^{\boldsymbol{i}})}\int_{R_N^{\boldsymbol{i}}}  \bigg|K_A\left(\boldsymbol{s}, \left[0,y\right]\right) \\
			&\hspace*{4cm}- K_A\left(\boldsymbol{x}, \left[0,y\right]\right)\bigg| \, d\mu_{A^{d}}(\boldsymbol{s}) \, d\mu_{A^{d}}(\boldsymbol{x}) \, d\lambda(y)\\
			&\quad + \sum_{j=1}^N \int_{\left[\frac{j-1}{N}, \frac{j}{N}\right]} \sum_{\boldsymbol{i} \in \mathcal{I}_0}\int_{R_N^{\boldsymbol{i}}} \frac{1}{\mu_{A^{d}}(R_N^{\boldsymbol{i}})}\int_{R_N^{\boldsymbol{i}}}  \bigg|K_A\left(\boldsymbol{x}, \left[0,y\right]\right) \\
			&\hspace*{4cm}- K_A\left(\boldsymbol{x}, \left[0,\tfrac{j}{N}\right]\right)\bigg| \, d\mu_{A^{d}}(\boldsymbol{s}) \, d\mu_{A^{d}}(\boldsymbol{x}) \, d\lambda(y) \\
			&\leq \frac{6}{N}  + \int_{[0,1]} \sum_{\boldsymbol{i} \in \mathcal{I}_0} \int_{R_N^{\boldsymbol{i}}} \frac{1}{\mu_{A^{d}}(R_N^{\boldsymbol{i}}))}\int_{R_N^{\boldsymbol{i}}}  \bigg|K_A\left(\boldsymbol{s}, \left[0,y\right]\right) \\
			&\hspace*{4cm}- K_A\left(\boldsymbol{x}, \left[0,y\right]\right)\bigg| \, d\mu_{A^{d}}(\boldsymbol{s}) \, d\mu_{A^{d}}(\boldsymbol{x}) \, d\lambda(y).
	\end{align*}}
	\noindent Applying the triangle inequality and equation  \eqref{eq:L1kernel} yields 
	{\small\begin{align*}
			I_2
			&\leq \frac{6}{N}  \\
			&\quad+\int_{[0,1]} \sum_{\boldsymbol{i} \in \mathcal{I}_0}\int_{R_N^{\boldsymbol{i}}} \frac{1}{\mu_{A^{d}}(R_N^{\boldsymbol{i}})}\int_{R_N^{\boldsymbol{i}}}  \left|K_A\left(\boldsymbol{s}, \left[0,y\right]\right) - f_{\varepsilon}(\boldsymbol{s},y)\right| \, d\mu_{A^{d}}(\boldsymbol{s}) \, d\mu_{A^{d}}(\boldsymbol{x}) \, d\lambda(y) \\
			&\quad+ \int_{[0,1]} \sum_{\boldsymbol{i} \in \mathcal{I}_0} \int_{R_N^{\boldsymbol{i}}} \frac{1}{\mu_{A^{d}}(R_N^{\boldsymbol{i}})}\int_{R_N^{\boldsymbol{i}}}  \left|f_{\varepsilon}(\boldsymbol{s},y) - f_{\varepsilon}(\boldsymbol{x},y)\right| \, d\mu_{A^{d}}(\boldsymbol{s}) \, d\mu_{A^{d}}(\boldsymbol{x}) \, d\lambda(y) \\
			&\quad + \int_{[0,1]} \sum_{\boldsymbol{i} \in \mathcal{I}_0} \int_{R_N^{\boldsymbol{i}}} \frac{1}{\mu_{A^{d}}(R_N^{\boldsymbol{i}})}\int_{R_N^{\boldsymbol{i}}}  \left|f_{\varepsilon}(\boldsymbol{x},y) - K_A\left(\boldsymbol{x}, \left[0,y\right]\right)\right| \, d\mu_{A^{d}}(\boldsymbol{s}) \, d\mu_{A^{d}}(\boldsymbol{x}) \, d\lambda(y) \\
			&\leq \frac{6}{N}  + \Vert K_A - f_\varepsilon \Vert_{L^1(\mu_{A^{d}} \times \lambda)} + \varepsilon +  \Vert K_A - f_\varepsilon \Vert_{L^1(\mu_{A^{d}} \times \lambda)} \\
			&< 9\varepsilon.
	\end{align*}}
	
	\noindent For $I_1$ we proceed as follows. Using equation  \eqref{eq:mk_kernel}, applying the triangle inequality and 
	the fact that $\big| |x|-|y|\big| \leq |x-y|$ holds for all real numbers $x,y$ we have\\
			{\small
				\begin{align*}
					I_1 &= \bigg| \int_{\mathbb{I}} \sum_{\boldsymbol{i} \in \mathcal{I}_0} \bigg(\int_{R_N^{\boldsymbol{i}}} \left| \frac{\mu_{\hat{A}_{N}}(R_N^{\boldsymbol{i}} \times [0,y])}{\mu_{\hat{A}_{N}^{d}}(R_N^{\boldsymbol{i}})} - y \right| \, d\mu_{\hat{A}_{N}^{d}}(\boldsymbol{x}) \\
					&\qquad - \int_{R_N^{\boldsymbol{i}}} \left| \frac{\mu_{A_N}(R_N^{\boldsymbol{i}} \times [0,y])}{\mu_{A_{N}^{d}}(R_N^{\boldsymbol{i}})} - y \right| d\mu_{A_N^{d}}(\boldsymbol{x}) \bigg)d\lambda(y) \bigg|\\
					&=\left| \int_{\mathbb{I}}\sum_{\boldsymbol{i} \in \mathcal{I}_0} \left| \mu_{\hat{A}_{N}}(R_N^{\boldsymbol{i}} \times [0,y])- y\mu_{\hat{A}_{N}^{d}}(R_N^{\boldsymbol{i}}) \right|  - \left| \mu_{A_N}(R_N^{\boldsymbol{i}} \times [0,y])- y\mu_{A_{N}^{d}}(R_N^{\boldsymbol{i}}) \right|\, d\lambda(y) \right|\\
					&\leq \int_{\mathbb{I}} \sum_{\boldsymbol{i} \in \mathcal{I}_0} \left| \mu_{\hat{A}_{N}}(R_N^{\boldsymbol{i}} \times [0,y]) - \mu_{A_N}(R_N^{\boldsymbol{i}} \times [0,y])\right| + y \left|\mu_{\hat{A}_N^{d}}(R_N^{\boldsymbol{i}})-\mu_{A_{N}^{d}}(R_N^{\boldsymbol{i}}) \right| \, d\lambda(y)\\
					&\leq C \cdot N^d  \cdot \left(d_\infty(\hat{A}_{N}, A_{N}) + d_\infty(\hat{A}_N^{d}, A_{N}^{d}) \right) \leq C \cdot N^d  \cdot \left(d_\infty(\hat{A}_n, A) + d_\infty(\hat{A}_n^{d}, A^{d}) \right)\\
					&\leq 2C \cdot N^d  \cdot d_\infty(\hat{A}_n, A).
				\end{align*}
	}
	\noindent According to Lemma \ref{lem:empCop} there exists a set $\Lambda \in \mathcal{A}$ with $\mathbb{P}(\Lambda)=1$ such that for every $\omega \in \Lambda$ we can find a constant $c(\omega)>0$ and an index $n_0=n_0(\omega) \in \mathbb{N}$ such that 
	\begin{align*}
		d_\infty(\hat{A}_n(\omega), A) \leq c(\omega) \sqrt{\frac{\log(\log(n))}{n}}
	\end{align*}
	holds for all $n \geq n_0$. Altogether we conclude that for every $\omega \in \Lambda$ and $N(n)=\lfloor n^s \rfloor$ with $0<s<\frac{1}{2d}$ we have convergence and the proof is complete. 
\end{proof}

\begin{remk}
	Simulations (see Appendix \ref{app:simstudy}) insinuate that the range of the parameter $s$ according to
	Theorem \ref{thm:maintheorem} for which we have consistency can be extended to the interval 
	$\left(0,\frac{1}{d}\right)$. We conjecture that the optimal choice of $s$ (optimal in the sense that the estimator performs well independent of the underlying dependence structure) is setting $s = \frac{1}{\rho}$. \\
	As a consequence, the publicly available 
	R-package \textquoteleft qmd' (short for: quantification of multivariate dependence), which 
	contains the afore-mentioned estimator considers $s = \frac{1}{\rho}$. All simulations presented
	in the Appendix can be reproduced using the \textquoteleft qmd'-package.
\end{remk}

\section{Concluding remarks}\label{sec:outlook}
This paper generalizes and extends results going back to \cite{bonmee2016, trutschnig2011} by considering 
linkages and a metric on the space of linkages which induced the multivariate dependence measure $\zeta^1$. 
The derived checkerboard estimator is strongly consistent in full generality. 

As one of the next steps we will address properties of $\zeta^1$ (Theorem \ref{thm:properties}) and strong consistency of 
$\hat{\zeta}_n^1$ in the discrete and mixed setting. As already mentioned, ties have a strong influence on the 
resulting copula. We conjecture that when working with multilinear interpolations (to assure uniqueness of the copula) 
the marginal distribution of $Y$ has to be incorporated in the definition of $\zeta^1$, particularly in order to 
attain the maximum value of $1$ can not be reached in the setting of complete dependence. 
One possible approach could be to modify the definition of $\zeta^1$ to 
\begin{align}\label{def:zeta1_discrete}
	\zeta^1(\boldsymbol{X},Y) := \frac{D_1(A,\Pi_\rho)}{D_1(C,\Pi_2)},
\end{align}
where $A\in \mathcal{C}^\rho$ is the copula underyling $(\boldsymbol{X}, Y)$ and $C \in \mathcal{C}^2$ denotes the 
copula underlying $(Y,Y)$ (constructed via bilinear interpolation of the subcopula induced by $(Y,Y)$). 

Additionally, we will try to prove or falsify the conjecture that the permissible range of the parameter $s$ according 
to Theorem \ref{thm:maintheorem} can be enlarged to $\left(0,\tfrac{1}{d}\right)$.

\appendix

\section{Additional proofs}\label{app:proofs}

\begin{proof}[Proof of Lemma \ref{lem:derivative}] \label{proof:lem:derivative}
	We prove the statement for dimension $\rho=4$, the gene\-ral case can be handled analogously. 
	Since $C \in \mathcal{C}_{\Pi_3}^4$ is a linkage disintegration yields  
	\begin{align*}
		C(x_1,x_2,x_3,y) = \int_{[0,x_1]} \int_{[0,x_2]} \int_{[0,x_3]} K_C(s_1,s_2,s_3,[0,y]) d\lambda(s_3)d\lambda(s_2)d\lambda(s_1)
	\end{align*}
	for all $x_1,x_2,x_3,y \in \mathbb{I}$. 
	Fix $y \in \mathbb{I}$. Then for arbitrary $(x_2,x_3) \in \mathbb{I}^2$ there exists a set $\Lambda^y_{(x_2,x_3)} \in \mathcal{B}(\mathbb{I})$ with $\lambda(\Lambda^y_{(x_2,x_3)}) = 1$ such that for every 
	$x_1 \in \Lambda^y_{(x_2,x_3)} \cap (0,1)$ the partial derivative $\frac{\partial C}{\partial x_1}$ in $(x_1,x_2,x_3,y)$ exists and fulfills 
	\begin{align*}
		\frac{\partial}{\partial x_1}C(x_1,x_2,x_3,y) = \int_{[0,x_2]} \int_{[0,x_3]} K_C(x_1,s_2,s_3,[0,y]) d\lambda(s_3)d\lambda(s_2).
	\end{align*}
	Setting $\Lambda^y:=(0,1) \cap \bigcap_{(x_2,x_3) \in \mathbb{Q}^2 \cap \mathbb{I}^2} \Lambda_{(x_2,x_3)}^y$ obviously yields 
	$\Lambda^y \in \mathcal{B}(\mathbb{I})$ as well as $\lambda(\Lambda^y) = 1$. 
	Consider $x_1 \in \Lambda^y$ as well as $(x_2,x_3) \in \mathbb{I}^2$, suppose that $\underline{x}_2, \overline{x}_2, \underline{x}_3, \overline{x}_3 \in \mathbb{Q} \cap \mathbb{I}$ fulfill $\underline{x}_2 \leq x_2 \leq \overline{x}_2$, $\underline{x}_3 \leq x_3 \leq \overline{x}_3$ and define $I_h$ by
	\begin{align*}
		I_h(x_1,x_2,x_3,y):= \frac{C(x_1+h, x_2, x_3, y)-C(x_1,x_2,x_3,y)}{h}
	\end{align*}
	for $h \in [-x_1, 1-x_1] \setminus \{0\}$.
	Using the fact that $C$ is $4$-increasing obviously 
	\begin{align}\label{I_h_ineq}
		I_h(x_1,\underline{x}_2, \underline{x}_3,y) \leq I_h(x_1,x_2,x_3,y) \leq I_h(x_1,\overline{x}_2, \overline{x}_3,y)
	\end{align} 
	holds. Moreover, by construction, for the left and the right part of Eq. \eqref{I_h_ineq} the limit for $h \to 0$ exists and fulfills  
	\begin{align*}
		\lim_{h \to 0} I_h(x_1,\underline{x}_2, \underline{x}_3,y) &= \int_{[0,\underline{x}_2]} \int_{[0,\underline{x}_3]} K_C(x_1,s_2,s_3,[0,y]) d\lambda(s_3)d\lambda(s_2), \\
		\lim_{h \to 0} I_h(x_1,\overline{x}_2, \overline{x}_3,y) &= \int_{[0,\overline{x}_2]} \int_{[0,\overline{x}_3]} K_C(x_1,s_2,s_3,[0,y]) d\lambda(s_3)d\lambda(s_2), 
	\end{align*}
	which (via considering limes inferior and limes superior and the fact that $\mathbb{Q}$ is dense in $\mathbb{R}$) 
	implies the existence of $\lim_{h \to 0} I_h(x_1,x_2,x_3,y)=\frac{\partial}{\partial x_1} C(x_1,x_2,x_3,y)$ as well as
	\begin{align*}
		\frac{\partial}{\partial x_1}C(x_1,x_2,x_3,y) = \underbrace{\int_{[0,x_2]} \int_{[0,x_3]} K_C(x_1,s_2,s_3,[0,y]) d\lambda(s_3)d\lambda(s_2)}_{=:F_1(x_1,x_2,x_3,y)}
	\end{align*}
	for every $(x_2,x_3) \in \mathbb{I}^2$ and $x_1 \in \Lambda^y$. \\
	Now fix $x_1 \in \Lambda^y$ and $x_3 \in \mathbb{I}$. Then the mapping $x_2 \mapsto F_1(x_1,x_2,x_3,y)$ is absolutely continuous and non-decreasing, so there exists a set $\Lambda_{(x_1,x_3)}^y$ with $\lambda(\Lambda_{(x_1,x_3)}^y) = 1$ such that the partial derivative $\frac{\partial F_1}{\partial x_2}$ exists and fulfills
	\begin{align} \label{eq:F1}
		\frac{\partial}{\partial x_2}F_1(x_1,x_2,x_3,y)= \underbrace{\int_{[0,x_3]} K_C(x_1,x_2,s_3,[0,y]) d\lambda(s_3)}_{=: F_2(x_1,x_2,x_3,y)}.
	\end{align}
	Proceeding as before and considering $\Lambda_{x_1}^y:= (0,1) \cap \bigcap_{x_3 \in \mathbb{Q} \cap \mathbb{I}} \Lambda_{(x_1,x_3)}^y$ we get that for $x_1 \in \Lambda^y$, $x_2 \in \Lambda_{x_1}^y$ and arbitrary $x_3 \in \mathbb{I}$ the partial derivative of $F_1$ with respect to $x_2$ exists and satisfies Eq. \eqref{eq:F1}. Furthermore, we find a set $\Lambda_{(x_1,x_2)}^y \in \mathcal{B}(\mathbb{I})$ with $\lambda(\Lambda_{(x_1,x_2)}^y)=1$ such that   
	\begin{align}\label{eq:F2}
		\frac{\partial}{\partial x_3}F_2(x_1,x_2,x_3,y) = K_A(x_1,x_2,x_3,[0,y])
	\end{align}
	holds for $x_1 \in \Lambda^y$, $x_2 \in \Lambda_{x_1}^y$ and $x_3 \in \Lambda_{(x_1,x_2)}^y$. \\
	In the sequel we will use the following Dini derivatives of a function $f:(0,1)^4 \to \mathbb{I}$ with
	respect to the first coordinate (see, for instance, \cite{stromberg})
	\begin{align*}
		\frac{\partial^+}{\partial x_1} f(x_1,x_2,x_3,y):= \inf_{h > 0} \sup_{\Delta \in (-h,h)\setminus\{0\}} \frac{f(x_1 + \Delta, x_2,x_3,y) - f(x_1,x_2,x_3,y)}{\Delta} ,\\ 
		\frac{\partial^-}{\partial x_1} f(x_1,x_2,x_3,y):= \sup_{h > 0} \inf_{\Delta \in (-h,h)\setminus\{0\}} \frac{f(x_1 + \Delta, x_2,x_3,y) - f(x_1,x_2,x_3,y)}{\Delta}.
	\end{align*}  
	and consider the set $\Gamma^y$ defined by
	\begin{align*}
		\Gamma^y:=\bigg\{&(x_1,x_2,x_3) \in (0,1)^3: \\
		&\frac{\partial^+}{\partial x_1} C(x_1,x_2,x_3,y) =
		\frac{\partial^-}{\partial x_1} C(x_1,x_2,x_3,y) = F_1(x_1,x_2,x_3,y), \\
		&\frac{\partial^+}{\partial x_2} F_1(x_1,x_2,x_3,y) =
		\frac{\partial^-}{\partial x_2} F_1(x_1,x_2,x_3,y) = F_2(x_1,x_2,x_3,y), \\
		&\frac{\partial^+}{\partial x_3} F_2(x_1,x_2,x_3,y) =
		\frac{\partial^-}{\partial x_3} F_2(x_1,x_2,x_3,y) = K_C(x_1,x_2,x_3,[0,y]) 
		\bigg\}.
	\end{align*}
	Note that continuity of the maps $x_1 \mapsto C(x_1,x_2,x_3,y)$, $x_2 \mapsto F_1(x_1,x_2,x_3,y)$ and $x_3 \mapsto F_2(x_1,x_2,x_3,y)$ implies measurability of the considered Dini derivatives, so $\Gamma^y \in \mathcal{B}(\mathbb{I}^3)$ follows immediately. Finally using disintegration twice and considering  
	\begin{align*}
		\lambda^3(\Gamma^y) & = \int_{\mathbb{I}} \lambda^2(\Gamma^y_{x_1}) d\lambda(x_1) = \int_{\Lambda^y} \lambda^2(\Gamma^y_{x_1}) d\lambda(x_1) \\
		&=\int_{\Lambda^y} \int_{\mathbb{I}} \lambda\left(\left(\Gamma^y_{x_1}\right)_{x_2}\right) d\lambda(x_2)d\lambda(x_1)\\
		&=\int_{\Lambda^y} \int_{\Lambda_{x_1}^y}\underbrace{ \lambda\left(\left(\Gamma^y_{x_1}\right)_{x_2}\right)}_{ \geq \lambda(\Lambda_{(x_1,x_2)}^y) = 1} d\lambda(x_2)d\lambda(x_1)
	\end{align*}
	shows $\lambda^3(\Gamma^y)=1$.\\
	Using Fubini's theorem, repeating the above procedure with a different order of the partial derivatives yields another Borel set of full $\lambda^3$-measure and the proof is complete.   
\end{proof}

\begin{proof}[Proof of Lemma \ref{lem:metric}]
	First of all we show that the integrand of $D_1$ (or $D_p$, respectively) is measurable. Define $H$ on $[0,1]^\rho$ by $H(\boldsymbol{x},y)=K_{L(A)}(\boldsymbol{x},[0,y])$. Then $H$ is measurable in $\boldsymbol{x}$ and non-decreasing and right-continuous in $y$. Fix $z \in [0,1]$. For every $q \in \mathbb{Q} \cap [0,1]$ define 
	$$\Lambda_q:=\{\boldsymbol{x} \in \mathbb{I}^d: H(\boldsymbol{x},q) < z\} \in \mathcal{B}([0,1]^d),$$
	and set 
	$$\Lambda:= \bigcup_{q \in \mathbb{Q} \cap \mathbb{I}} \Lambda_q \times [0,q] \in \mathcal{B}([0,1]^\rho).$$  
	Using right-continuity it is straightforward to see that $\Lambda=H^{-1}([0,z))$, from which measurability of $H$ directly follows. \\
	Furthermore, if $D_1(A,B) = 0$ then there exists a set $\Lambda \subseteq [0,1]^\rho$ with $\lambda^\rho(\Lambda) = 1$ such that for every $(\boldsymbol{x},y) \in \Lambda$ we have $K_{L(A)}(\boldsymbol{x},[0,y]) = K_{L(B)}(\boldsymbol{x},[0,y])$. It follows that $$\lambda(\Lambda_{\boldsymbol{x}}) = \lambda(\{y \in \mathbb{I}: (\boldsymbol{x},y) \in \Lambda\}) = 1$$ holds for almost every $\boldsymbol{x} \in \mathbb{I}^d$. For every such $\boldsymbol{x}$ we have that the kernels coincide on a dense set, so the conditional distribution functions have to be identical. Using disintegration shows $L(A) = L(B)$. Note that on the space of copulas we might have $D_1(A,B)=0$ although $A \neq B$. 
	The remaining properties of a metric are obviously fulfilled. The fact that $D_\infty$ and $D_p$ are metrics can be shown analogously. 
\end{proof}

\begin{proof}[Proof of Proposition \ref{prop:ineq}]
	We start by showing that the function $\phi_{A,B}$ defined by
	\begin{align*}
		\phi_{A,B}(y):= \int_{\mathbb{I}^d} \lvert K_{L(A)}(\boldsymbol{x},[0,y]) - K_{L(B)}(\boldsymbol{x},[0,y])\rvert d\lambda^d(\boldsymbol{x}) 
	\end{align*}
	is Lipschitz continuous with Lipschitz constant $L=2$. In fact, if $s < t$, defining $G$ by 
	$$G:= \{\boldsymbol{x} \in [0,1]^d: K_{L(A)}(\boldsymbol{x},(s,t]) > K_{L(B)}(\boldsymbol{x},(s,t])\}$$ 
	and using Scheff\'e's theorem (see \cite{devroye1987course}), we have
	\begin{align*}
		\lvert \phi_{A,B}(s) - \phi_{A,B}(t) \rvert &\leq \int_{\mathbb{I}^d} \lvert K_{L(A)}(\boldsymbol{x},(s,t]) - K_{L(B)}(\boldsymbol{x},(s,t]) \rvert d\lambda^d(\boldsymbol{x}) \\
		&= 2 \int_G K_{L(A)}(\boldsymbol{x},(s,t]) - K_{L(B)}(\boldsymbol{x},(s,t]) d\lambda^d(\boldsymbol{x}) \\
		&\leq 2\int_{\mathbb{I}^d} K_{L(A)}(\boldsymbol{x}, (s,t]) \, d\lambda^d(\boldsymbol{x})
		\leq 2 \lambda((s,t]) = 2(t-s),
	\end{align*} 
	which shows Lipschitz continuity. \\	
	Statement (1) follows directly from disintegration and the triangle inequality since
	\begin{align*}
		d_\infty(L(A),L(B)) &= \sup_{(\boldsymbol{x},y) \in \mathbb{I}^\rho} \lvert L(A)(\boldsymbol{x},y) - L(B)(\boldsymbol{x},y) \rvert \\
		&=\sup_{(\boldsymbol{x},y) \in \mathbb{I}^\rho} \left| \int_{[0,\boldsymbol{x}]} K_{L(A)}(\boldsymbol{s},[0,y]) - K_{L(B)}(\boldsymbol{s},[0,y]) d\lambda^d(\boldsymbol{s}) \right| \\
		&\leq \sup_{y \in \mathbb{I}} \int_{\mathbb{I}^d}  \left| K_{L(A)}(\boldsymbol{s},[0,y]) - K_{L(B)}(\boldsymbol{s},[0,y]) \right| d\lambda^d(\boldsymbol{s}) \\
		&= D_\infty(A,B).
	\end{align*}
	The first inequality in the second assertion is obvious. Using Lipschitz continuity we can find some $y_0 \in [0,1]$ such that $\phi_{A,B}(y_0) = \sup_{y \in [0,1]} \phi_{A,B}(y)$. Furthermore the area between the graph of $\phi_{A,B}$ and the $x$-axis contains the triangle $\Delta_1$ with vertices $\{(y_0-\tfrac{\phi_{A,B}(y_0)}{2}, 0), (y_0,0), (y_0, \phi_{A,B}(y_0))\}$ or the triangle $\Delta_2$ with vertices $\{(y_0,0),(y_0+\tfrac{\phi_{A,B}(y_0)}{2}, 0), (y_0, \phi_{A,B}(y_0))\}$. Hence, we have
	\begin{align*}
		\int_{[0,1]} \phi_{A,B}(y) d\lambda(y) \geq \frac{\phi_{A,B}(y_0) \frac{\phi_{A,B}(y_0)}{2}}{2} = \frac{D_\infty(A,B)^2}{4},
	\end{align*}  
	which proves assertion (2). The first inequality in the third assertion is trivial since the integrand only attains values in $[0,1]$, the second one follows from Hölder's inequality.
\end{proof}

\begin{proof}[Proof of Theorem \ref{thm:CBdense}]
	Since $C \in \mathcal{C}_{\Pi_d}^\rho$ it is straightforward to verify that $\mathfrak{CB}_N(C)\in \mathcal{C}_{\Pi_d}^\rho$ holds 
	for eve\-ry $N \in \mathbb{N}$. Fix $y \in (0,1)$, let $J_N(y)$ denote the unique interval of the form $J_N(y):=\left(\frac{j(y)-1}{N}, \frac{j(y)}{N}\right]$ containing $y$, and let $Q$ be a countable dense subset of $(0,1)$. For every $q \in Q$ we find a set $\Lambda_q$ with $\lambda^d(\Lambda_q)=1$ such that for every $\boldsymbol{x} \in \Lambda_q$ the point $\boldsymbol{x}$ is a Lebesgue point of the mappings $(\boldsymbol{x} \mapsto K_C(\boldsymbol{x},[0,q]))$ and $(\boldsymbol{x} \mapsto K_C(\boldsymbol{x},(q-\delta, q + \delta)))$ for every $\delta \in \mathbb{Q} \cap (0,1)$. 
	Setting $\Lambda_0 := \bigcap_{q \in Q} \Lambda_q$ and defining $E_q$ by
	\begin{align*}
		E_q:=\{\boldsymbol{x} \in \Lambda_0: K_C(\boldsymbol{x},\{q\})=0\}
	\end{align*}
	for every $q \in Q$, both $\Lambda_0$ and $E_q$ are of $\lambda^d$-measure $1$. 
	Considering $\Lambda:= (\bigcap_{q \in Q} E_q)\setminus \mathbb{Q}^d$ therefore yields 
	$\lambda^d(\Lambda) = 1$. \\
	
	\noindent As first major step we now show that  
	\begin{align*}
		\lim_{N \to \infty} K_{\mathfrak{CB}_N(C)}\left(\boldsymbol{x}, J_N(y)\right) = 0
	\end{align*}
	holds for every $(\boldsymbol{x},y) \in \Lambda \times Q$. Fix $(\boldsymbol{x},y) \in \Lambda \times Q$ and $\varepsilon>0$. Then there exists some $\delta \in \mathbb{Q} \cap (0,1]$ such that $K_C(\boldsymbol{x}, (y-\delta, y+\delta)) < \varepsilon$. Furthermore, there exists an $N_0 \in \mathbb{N}$ such that for all $N \geq N_0$ we have $J_N(y) \subseteq (y-\delta, y + \delta)$. 
	Let $(R_N^{\boldsymbol{i}}(\boldsymbol{x}))_{N \in \mathbb{N}}$ denote the unique sequence of hypercubes (according to equation (\ref{hypercubes})) containing $\boldsymbol{x}$. It is straightforward to verify that $(R_N^{\boldsymbol{i}}(\boldsymbol{x}))_{N \in \mathbb{N}}$ shrinks nicely to $\boldsymbol{x}$ with respect to $\lambda^d$ (see \cite{rudin}). Using the fact that the probability measure $K_{\mathfrak{CB}_N(C)}(\boldsymbol{x}, \cdot)$ is constant on each hypercube and 
	$$\mu_{\mathfrak{CB}_N(C)}(R_N^{\boldsymbol{i}}(\boldsymbol{x}) \times J_N(y)) = \mu_{C}(R_N^{\boldsymbol{i}}(\boldsymbol{x})\times J_N(y))$$ 
	holds for every $N \in \mathbb{N}$ yields 
	\begin{align*}
		K_{\mathfrak{CB}_N(C)}\big(\boldsymbol{x}, J_N(y)\big) &= \frac{1}{\lambda^d(R_N^{\boldsymbol{i}}(\boldsymbol{x}))} \int_{R_N^{\boldsymbol{i}}(\boldsymbol{x})} K_{\mathfrak{CB}_N(C)}\big(\boldsymbol{s}, J_N(y)\big) d\lambda^d(\boldsymbol{s})\\
		&= \frac{1}{\lambda^d(R_N^{\boldsymbol{i}}(\boldsymbol{x}))} \mu_{\mathfrak{CB}_N(C)}\left(R_N^{\boldsymbol{i}}(\boldsymbol{x}) \times J_N(y)\right)\\
		&= \frac{1}{\lambda^d(R_N^{\boldsymbol{i}}(\boldsymbol{x}))} \int_{R_N^{\boldsymbol{i}}(\boldsymbol{x})}K_{C}\big(\boldsymbol{s}, J_N(y)\big) d\lambda^d(\boldsymbol{s})\\
		&\leq \frac{1}{\lambda^d(R_N^{\boldsymbol{i}}(\boldsymbol{x}))} \int_{R_N^{\boldsymbol{i}}(\boldsymbol{x})} K_{C}\big(\boldsymbol{s}, (y-\delta, y+ \delta)\big) d\lambda^d(\boldsymbol{s}).
	\end{align*} 
	Applying Lebesgue's differentiation theorem (see \cite[Theorem 7.10]{rudin}) we obtain
	\begin{align*}
		\limsup_{N \to \infty} K_{\mathfrak{CB}_N(C)}\big(&\boldsymbol{x}, J_N(y)\big) = 
		\limsup_{N \to \infty}\frac{1}{\lambda^d(R_N^{\boldsymbol{i}}(\boldsymbol{x}))} \int_{R_N^{\boldsymbol{i}}(\boldsymbol{x})} K_{C}\big(\boldsymbol{s}, J_N(y)\big) d\lambda^d(\boldsymbol{s})\\
		&\leq  \limsup_{N \to \infty}\frac{1}{\lambda^d(R_N^{\boldsymbol{i}}(\boldsymbol{x}))} \int_{R_N^{\boldsymbol{i}}(\boldsymbol{x})} K_{C}\big(\boldsymbol{s}, (y-\delta, y+ \delta)\big) d\lambda^d(\boldsymbol{s})\\
		&= K_{C}\big(\boldsymbol{x}, (y-\delta, y+ \delta)\big) < \varepsilon.
	\end{align*}
	Since $\varepsilon$ was arbitrary we have shown that $\lim_{N \to \infty} K_{\mathfrak{CB}_N(C)}\left(\boldsymbol{x}, J_N(y)\right) = 0$. \\
	
	\noindent Let $(\boldsymbol{x}, y)$ be as before. Then using the afore-mentioned property of 
	$\mathfrak{CB}_N(C)$ again yields  
	\begin{align*}
		K_{\mathfrak{CB}_N(C)} (\boldsymbol{x}, [0,y])  
		&= \frac{1}{\lambda^d(R_N^{\boldsymbol{i}}(\boldsymbol{x}))} \int_{R_N^{\boldsymbol{i}}(\boldsymbol{x})} K_{\mathfrak{CB}_N(C)}(\boldsymbol{s}, [0,y]) \, d\lambda^d(\boldsymbol{s})\\
		&=  \frac{1}{\lambda^d(R_N^{\boldsymbol{i}}(\boldsymbol{x}))} \int_{R_N^{\boldsymbol{i}}(\boldsymbol{x})} K_{C}(\boldsymbol{s},[0,y]) \, d\lambda^d(\boldsymbol{s})\\
		&\quad +
		\frac{1}{\lambda^d(R_N^{\boldsymbol{i}}(\boldsymbol{x}))} \int_{R_N^{\boldsymbol{i}}(\boldsymbol{x})} K_{\mathfrak{CB}_N(C)}(\boldsymbol{s},[0,y]) - K_{C}(\boldsymbol{s},[0,y]) \, d\lambda^d(\boldsymbol{s}) \\
		&\leq \frac{1}{\lambda^d(R_N^{\boldsymbol{i}}(\boldsymbol{x}))} \int_{R_N^{\boldsymbol{i}}(\boldsymbol{x})} K_{C}(\boldsymbol{s},[0,y]) \, d\lambda^d(\boldsymbol{s})\\
		&\quad +
		\frac{1}{\lambda^d(R_N^{\boldsymbol{i}}(\boldsymbol{x}))} \int_{R_N^{\boldsymbol{i}}(\boldsymbol{x})} K_{\mathfrak{CB}_N(C)}\left(\boldsymbol{s},\left[0,\tfrac{j(y)}{N}\right]\right) \, d\lambda^d(\boldsymbol{s})\\
		&\quad - \frac{1}{\lambda^d(R_N^{\boldsymbol{i}}(\boldsymbol{x}))}\int_{R_N^{\boldsymbol{i}}(\boldsymbol{x})} K_{C}\left(\boldsymbol{s},\left[0,\tfrac{j(y)-1}{N}\right]\right) \, d\lambda^d(\boldsymbol{s}) \\
		&=\frac{1}{\lambda^d(R_N^{\boldsymbol{i}}(\boldsymbol{x}))} \int_{R_N^{\boldsymbol{i}}(\boldsymbol{x})} K_{C}(\boldsymbol{s},[0,y]) \, d\lambda^d(\boldsymbol{s}) \\
		&\quad + \frac{1}{\lambda^d(R_N^{\boldsymbol{i}}(\boldsymbol{x}))} \int_{R_N^{\boldsymbol{i}}(\boldsymbol{x})} K_{\mathfrak{CB}_N(C)}(\boldsymbol{s}, J_N(y)) \, d\lambda^d(\boldsymbol{s}) \\
		&=\frac{1}{\lambda^d(R_N^{\boldsymbol{i}}(\boldsymbol{x}))} \int_{R_N^{\boldsymbol{i}}(\boldsymbol{x})} K_{C}(\boldsymbol{s},[0,y]) \, d\lambda^d(\boldsymbol{s}) \\
		&\quad + K_{\mathfrak{CB}_N(C)}(\boldsymbol{x}, J_N(y)).
	\end{align*}
	Applying Lebesgue's differentiation theorem and the results of the first part of the proof yields
	\begin{align*}
		\limsup_{N \to \infty} K_{\mathfrak{CB}_N(C)} (\boldsymbol{x}, [0,y])  \leq K_{C} (\boldsymbol{x}, [0,y]).
	\end{align*} 
	Proceeding in a similar manner we also get
	\begin{align*}
		K_{\mathfrak{CB}_N(C)} (\boldsymbol{x}, [0,y])  &\geq  \frac{1}{\lambda^d(R_N^{\boldsymbol{i}}(\boldsymbol{x}))} \int_{R_N^{\boldsymbol{i}}(\boldsymbol{x})} K_{C}(\boldsymbol{s},[0,y]) \, d\lambda^d(\boldsymbol{s})\\
		&\quad +
		\frac{1}{\lambda^d(R_N^{\boldsymbol{i}}(\boldsymbol{x}))} \int_{R_N^{\boldsymbol{i}}(\boldsymbol{x})} K_{\mathfrak{CB}_N(C)}\left(\boldsymbol{s},\left[0,\tfrac{j(y)-1}{N}\right]\right) \, d\lambda^d(\boldsymbol{s})\\
		&\quad -  \frac{1}{\lambda^d(R_N^{\boldsymbol{i}}(\boldsymbol{x}))}\int_{R_N^{\boldsymbol{i}}(\boldsymbol{x})} K_{C}\left(\boldsymbol{s},\left[0,\tfrac{j(y)}{N}\right]\right) \, d\lambda^d(\boldsymbol{s}) \\
		&=\frac{1}{\lambda^d(R_N^{\boldsymbol{i}}(\boldsymbol{x}))} \int_{R_N^{\boldsymbol{i}}(\boldsymbol{x})} K_{C}(\boldsymbol{s},[0,y]) \, d\lambda^d(\boldsymbol{s}) \\
		&\quad - K_{\mathfrak{CB}_N(C)}(\boldsymbol{x}, J_N(y)),
	\end{align*}
	implying
	\begin{align*}
		\liminf_{N \to \infty} K_{\mathfrak{CB}_N(C)} (\boldsymbol{x}, [0,y])  \geq K_{C} (\boldsymbol{x}, [0,y]).
	\end{align*} 
	Altogether we have shown  
	\begin{align*}
		\lim_{N \to \infty}  K_{\mathfrak{CB}_N(C)} (\boldsymbol{x}, [0,y]) = K_C(\boldsymbol{x}, [0,y]).
	\end{align*}
	Since weak convergence of univariate distribution functions $F_1,F_2,\ldots$ to $F$ is equivalent to pointwise convergence on a dense subset (see \cite{billingsley1968}), we have shown that $\lambda^d$-almost all conditional distribution functions $(y \mapsto K_{\mathfrak{CB}_N(C)}(\boldsymbol{x}, [0,y]))$ converge weakly to $(y \mapsto K_C(\boldsymbol{x}, [0,y]))$.
\end{proof}

\section{Simulation study} \label{app:simstudy}
In order to illustrate the small/moderate sample performance as well as the convergence speed of our estimator $\hat{\zeta}_n^1$ we consider several dependence structures ranging from independence to complete dependence. If not specified otherwise, we considered $s \in \left\{\tfrac{1}{\rho}, \tfrac{1}{2d}\right\}$. Furthermore, the minimum resolution of the checkerboard aggregation was set to $N=2$. For the extreme cases of independence and complete dependence we compared our estimator with the results obtained by the \textquoteleft simple measure of conditional dependence $T_n$' (using the function \textquoteleft codec' in the R-package \textquoteleft FOCI', see \cite{foci}).

\subsection{Independence}
To test the performance of $\hat{\zeta}_n^1$ in the setting of $Y$ and $\boldsymbol{X}$ being independent
for different dependence structures of $\boldsymbol{X}$ we considered $d \in \{2,3,4\}$, generated samples 
$(\boldsymbol{x}_1,y_1),\ldots,(\boldsymbol{x}_n,y_n)$
of sizes 
$n \in \{100, 500, 1.000, 5.000, 10.000 \}$ and calculated $\hat{\zeta}^1_n$ as well as $T_n$. These steps were repeated $R = 1.000$ times, the obtained results are depicted as boxplots in Figs. \ref{fig:indep1}, \ref{fig:indep2} and \ref{fig:indep3}. Obviously, $\hat{\zeta}_n^1$ only attains positive values and tends to $0$ for increasing sample size, whereas $T_n$ varies strongly around $0$, i.e., also attains large negative values. Hence, interpreting values of $\hat{\zeta}_n^1$ between $0$ and $0.3$ must be done with care and always under consideration of the sample size $n$. 
The simulations insinuate that $\hat{\zeta}_n^1$ exhibits slightly smaller variance. 

\begin{figure}[!ht]
	\centering
	\includegraphics[width=10.5cm, page = 1]{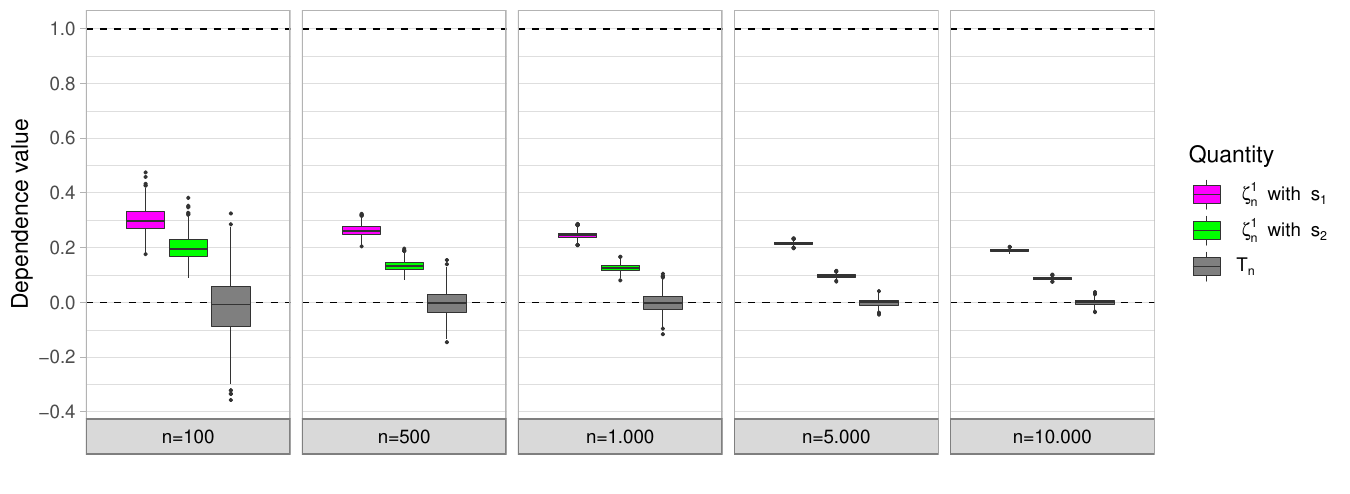}
	\caption{Boxplots summarizing the $1.000$ obtained estimates for $\hat{\zeta}_n^1((X_1,X_2),Y)$ for two different choices of $s$ ($s_1 = 1/3$ (magenta) and $s_2 = 1/4$ (green)) as well as for $T_n(Y,(X_1,X_2))$ (gray). Samples of size $n$ are drawn from normally distributed as well as exponentially distributed random variables: $X_1 \sim \mathcal{N}(0,1)$, $X_2 \sim \mathcal{N}(0,1)$, $Y \sim \mathcal{E}(1)$; $X_1,X_2,Y$ independent.}
	\label{fig:indep1}
\end{figure}
\begin{figure}[!ht]
	\centering
	\includegraphics[width=10.5cm, page = 2]{sim_independence.pdf}
	\caption{Boxplots summarizing the $1.000$ obtained estimates for $\hat{\zeta}_n^1((X_1,X_2,X_3),Y)$ for two different choices of $s$ ($s_1 = 1/4$ (magenta) and $s_2 = 1/6$ (green)) as well as for $T_n(Y,(X_1,X_2,X_3))$ (gray). Samples of size $n$ are drawn from: $X_1 \sim \mathcal{U}(0,1)$, $X_2:= X_1 + \varepsilon$ with $\varepsilon \sim \mathcal{N}(0,0.1^2)$, $X_3 \sim \mathcal{E}(1)$ and $Y \sim \mathcal{U}(0,1)$; $\boldsymbol{X}$ and $Y$ are independent.}
	\label{fig:indep2}
\end{figure}
\begin{figure}[!ht]
	\centering
	\includegraphics[width=10.5cm, page = 3]{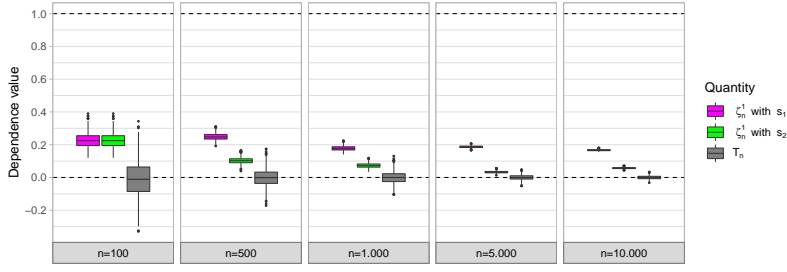}
	\caption{Boxplots summarizing the $1.000$ obtained estimates for $\hat{\zeta}_n^1((X_1,X_2,X_3,X_4),Y)$ for two different choices of $s$ ($s_1 = 1/5$ (magenta) and $s_2 = 1/8$ (green)) as well as for $T_n(Y,(X_1,X_2,X_3,X_4))$ (gray). Samples of size $n$ are drawn from: $X_1 \sim \mathcal{U}(0,1)$, $X_2:= 2X_1 (mod1) + \varepsilon_1$, $X_3 \sim \mathcal{U}(0,1)$, $X_4:= 2X_3 (mod1) + \varepsilon_2$, whereby $\varepsilon_1,\varepsilon_2 \sim \mathcal{N}(0,0.1^2)$ and $Y \sim \mathcal{U}(0,1)$; $\boldsymbol{X},Y$ independent.}
	\label{fig:indep3}
\end{figure}

\subsection{$C_{Cube}$}
To test the performance of $\hat{\zeta}_n^1$ for $C_{Cube} \in \mathcal{C}^3$ according to Example \ref{exa:cube}, we generated samples of size $n \in \{100, 500, 1.000, 5.000, 10.000 \}$ and calculated $\hat{\zeta}^1_n$. These steps were repeated $R = 1.000$ times, the obtained results are depicted as boxplots in Figs. \ref{fig:cube_sim1}. Obviously, $\hat{\zeta}_n^1$ converges to the true value from below. Not surprisingly the speed of convergence of 
$\hat{\zeta}_n^1$ strongly depends on the resolution $N$, defined by $N(n):= \lfloor n^s \rfloor$. 
\begin{figure}[!ht]
	\centering
	\includegraphics[width=10.5cm, page = 1]{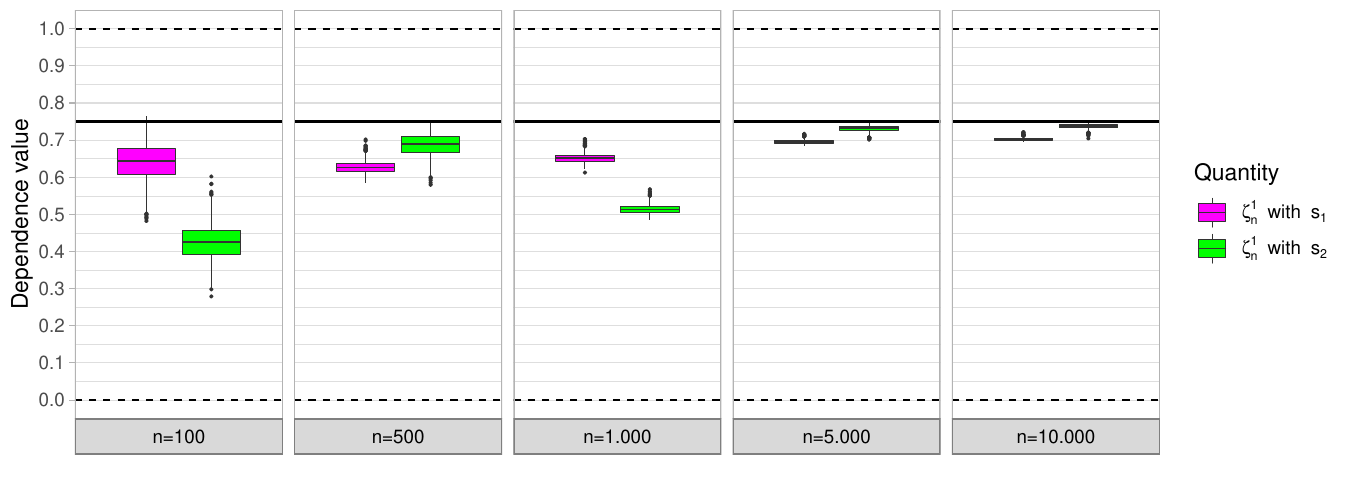}
	\caption{Boxplots summarizing the $1.000$ obtained estimates for $\hat{\zeta}_n^1(C_{Cube})$ for two different choices of $s$ ($s_1 = 1/3$ (magenta) and $s_2 = 1/4$ (green)). The true value of $\zeta^1(C_{Cube})=0.75$ is depicted as black horizontal line.}
	\label{fig:cube_sim1}
\end{figure}

\subsection{Complete (functional) dependence}
In order to test the performance of $\hat{\zeta}_n^1$ for the opposite extreme of complete dependence, we 
generated samples of size $n \in \{100, 500, 1.000, 5.000, 10.000 \}$ and calculated $\hat{\zeta}^1_n$ as well as $T_n$ for several different functional dependence structures. These steps were repeated $R = 1.000$ times. The results for some specific dependence structures in the three-, four- and five-dimensional setting are depicted in Figs. \ref{fig:dep1}, \ref{fig:dep2}, \ref{fig:dep3} and \ref{fig:dep4}. It can be seen that the convergence speed of $\hat{\zeta}_n^1$ is 
the better the larger the parameter $s$ (or the lower the dimension).        
\begin{figure}[!ht]
	\centering
	\includegraphics[width=10.5cm, page = 1]{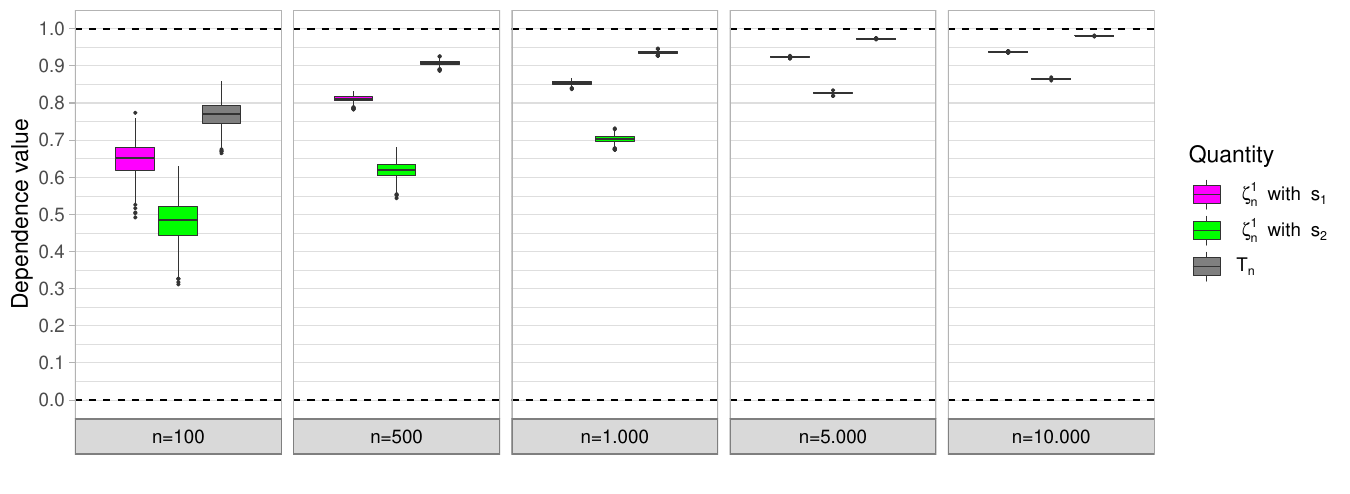}
	\caption{Boxplots summarizing the $1.000$ obtained estimates for $\hat{\zeta}_n^1((X_1,X_2),Y)$ for two different choices of $s$ ($s_1 = 1/3$ (magenta) and $s_2 = 1/4$ (green)) as well as for $T_n(Y,(X_1,X_2))$ (gray). Samples of size $n$ are drawn from: $X_1 \sim \mathcal{U}(-1,1)$, $X_2 \sim \mathcal{U}(-1,1)$ and $Y:=X_1^2 + X_2^2$.}
	\label{fig:dep1}
\end{figure}

\begin{figure}[!ht]
	\centering
	\includegraphics[width=10.5cm, page = 2]{sim_dependence.pdf}
	\caption{Boxplots summarizing the $1.000$ obtained estimates for $\hat{\zeta}_n^1((X_1,X_2),Y)$ for two different choices of $s$ ($s_1 = 1/3$ (magenta) and $s_2 = 1/4$ (green)) as well as for $T_n(Y,(X_1,X_2))$ (gray). Samples of size $n$ are drawn from: $X_1 \sim \mathcal{N}(0,1)$, $X_2 \sim \mathcal{N}(0,1)$ and $Y:=X_1/X_2$.}
	\label{fig:dep2}
\end{figure}

\begin{figure}[!ht]
	\centering
	\includegraphics[width=10.5cm, page = 3]{sim_dependence.pdf}
	\caption{Boxplots summarizing the $1.000$ obtained estimates for $\hat{\zeta}_n^1((X_1,X_2,X_3),Y)$ for two different choices of $s$ ($s_1 = 1/4$ (magenta) and $s_2 = 1/6$ (green)) as well as for $T_n(Y,(X_1,X_2,X_3))$ (gray). Samples of size $n$ are drawn from: $X_1 \sim \mathcal{U}(0,1)$, $X_2:=2X_1 (mod1)$, $X_3 \sim \mathcal{U}(0,1)$ and $Y:=X_1 + X_2 + X_3 (mod1)$.}
	\label{fig:dep3}
\end{figure}

\begin{figure}[!ht]
	\centering
	\includegraphics[width=11cm, page = 4]{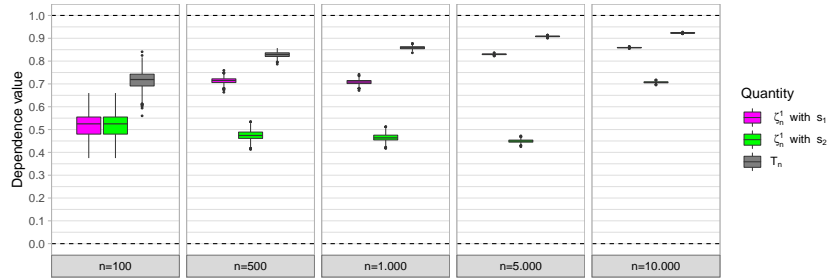}
	\caption{Boxplots summarizing the $1.000$ obtained estimates for $\hat{\zeta}_n^1((X_1,X_2,X_3,X_4),Y)$ for two different choices of $s$ ($s_1 = 1/5$ (magenta) and $s_2 = 1/8$ (green)) as well as for $T_n(Y,(X_1,X_2,X_3,X_4))$ (gray). Samples of size $n$ are drawn from: $X_1 \sim \mathcal{U}(0,1)$, $X_2 \sim \mathcal{U}(0,1)$, $X_3 \sim \mathcal{U}(0,1)$, $X_4 \sim \mathcal{U}(0,1)$ and $Y:=X_1 + X_2 + X_3 +X_4$.}
	\label{fig:dep4}
\end{figure}

\section*{Acknowledgement}
	\noindent The first and the second author gratefully acknowledge the support of the Austrian FWF START project Y1102 `Successional Generation of Functional Multidiversity'. Moreover, the third author gratefully acknowledges the support of the WISS 2025 project \textquoteleft IDA-lab Salzburg' (20204-WISS/225/197-2019 and 0102-F1901166-KZP).


\end{document}